
\documentclass[UTF-8,reqno]{amsart}
\usepackage{enumerate}
\usepackage{mhequ}
\usepackage[margin=1.2in]{geometry}
\usepackage{amssymb,url,color, booktabs,nccmath}
\usepackage{mathrsfs}
\usepackage{enumitem}
\usepackage{graphicx}
\usepackage{tikz}
\usepackage{verbatim}
\usepackage{amsfonts}
\usepackage{bbold}
\usetikzlibrary{shapes,snakes}
\usetikzlibrary{calc}
\usetikzlibrary{decorations.shapes}
\usepackage{color}
\usepackage[colorlinks=true]{hyperref}
\hypersetup{
	linkcolor=blue,          
	citecolor=red,        
	filecolor=blue,      
	urlcolor=cyan
}
\newcommand{\blue}{\color{blue}}

\newcommand{\black}{\color{black}}

\definecolor{darkergreen}{rgb}{0.0, 0.5, 0.0}


\setlength{\parskip}{2pt}

\numberwithin{equation}{section}

\newcommand{\be}{\begin{eqnarray}}
	\newcommand{\ee}{\end{eqnarray}}
\newcommand{\ce}{\begin{eqnarray*}}
	\newcommand{\de}{\end{eqnarray*}}
\newtheorem{theorem}{Theorem}[section]
\newtheorem{lemma}[theorem]{Lemma}
\newtheorem{remark}[theorem]{Remark}
\newtheorem{definition}[theorem]{Definition}
\newtheorem{proposition}[theorem]{Proposition}
\newtheorem{Examples}[theorem]{Example}
\newtheorem{corollary}[theorem]{Corollary}
\newtheorem{assumption}{Assumption}[section]

\newenvironment{nouppercase}{%
	\renewcommand{\uppercasenonmath}[1]{}}{}

\def\[{{\Big[}}
\def\]{{\Big]}}
\def\<{{\langle}}
\def\>{{\rangle}}
\def\({{\Big(}}
\def\){{\Big)}}

\def\bx{{\mathbf{x}}}

\def\sgn{\mbox{\rm sgn}}

\def\min{{\mathord{{\rm min}}}}

\def\={&\!\!=\!\!&}

\def\1{{\mathbf{1}}}

\def\geq{\geqslant}
\def\leq{\leqslant}
\def\ge{\geqslant}
\def\le{\leqslant}

\def\[{{\Big[}}
\def\]{{\Big]}}
\def\<{{\langle}}
\def\>{{\rangle}}
\def\({{\Big(}}
\def\){{\Big)}}

\def\bx{{\mathbf{x}}}

\def\sgn{\mbox{\rm sgn}}

\def\min{{\mathord{{\rm min}}}}

\def\={&\!\!=\!\!&}
\def\bt{\begin{theorem}}
	\def\et{\end{theorem}}
\def\bl{\begin{lemma}}
	\def\el{\end{lemma}}
\def\br{\begin{remark}}
	\def\er{\end{remark}}
\def\bx{\begin{Examples}}
	\def\ex{\end{Examples}}
\def\bd{\begin{definition}}
	\def\ed{\end{definition}}
\def\bp{\begin{proposition}}
	\def\ep{\end{proposition}}
\def\bc{\begin{corollary}}
	\def\ec{\end{corollary}}
\def\bc{\begin{assumption}}
	\def\ec{\end{assumption}}
\def\bc{\begin{proposition}}
	\def\ec{\end{proposition}}

\def\geq{\geqslant}
\def\leq{\leqslant}
\def\ge{\geqslant}
\def\le{\leqslant}

\def\<{\langle} \def\>{\rangle}

\allowdisplaybreaks

\tikzset{
	dot/.style={circle,fill=black,inner sep=0pt, outer sep=0.7pt, minimum size=1mm},
	Phi/.style={white!40!red,thick,snake=coil,segment amplitude=0.6pt, segment length=2pt},
	Z/.style={black!40!green,thick,snake=coil,segment amplitude=0.6pt, segment length=2pt},
	C/.style={thick,black!20!blue},
	Cr/.style={thick,black!20!red},
	Cg/.style={thick,black!20!green},
}

\begin{document}
	
	\title[Dean-Kawasaki Equation with Singular Interactions]
	{\LARGE {\color{black} Well-posedness of Dean-Kawasaki Equation with Singular Interactions}}

	\author[Likun Wang]{\large Likun Wang}
	\address[L. Wang]{Institute of Automation, Chinese Academy of Sciences, Beijing, 100190, China}
	\email{likun.wang@ia.ac.cn}
	
	\author[Zhengyan Wu]{\large Zhengyan Wu}
\address[Z. Wu]{Department of Mathematics, Technische Universit\"at M\"unchen, Boltzmannstr. 3, 85748 Garching, Germany}
\email{wuzh@cit.tum.de} 
	\author{Rangrang Zhang}
	\address[R. Zhang]{School of Mathematics and Statistics,
		Beijing Institute of Technology, Beijing, 100081, China
	}
	\email{rrzhang@amss.ac.cn}

	\begin{abstract}
		Inspired by [Fehrman, Gess; Invent. Math., 2023] and [Fehrman, Gess; Arch. Ration. Mech. Anal., 2024], we consider the Dean-Kawasaki equation with singular interactions and correlated noise which can be viewed as fluctuating mean-field limits. By imposing the Ladyzhenskaya-Prodi-Serrin condition on the interaction kernel, the existence of probabilistic weak renormalized kinetic solutions is established. Further, under an additional integrability assumption on the divergence of the interaction kernel, a kinetic formulation approach is applied to derive pathwise uniqueness, leading to the strong well-posedness of the equation. As an application, we obtain the well-posedness of a conservative stochastic partial differential equation known as the fluctuating Ising-Kac-Kawasaki dynamics.
			\end{abstract}

	\subjclass[2010]{60H15; 35R60}
	\keywords{Dean-Kawasaki equation, singular interactions, Fluctuating Ising-Kac-Kawasaki equation, well-posedness}
	
	\date{\today}
	
	\begin{nouppercase}
		\maketitle
	\end{nouppercase}
	
	\setcounter{tocdepth}{1}

	\section{Introduction}
	Fluctuating hydrodynamics provides a framework for simulating microscopic fluctuations by combining statistical mechanics and nonequilibrium thermodynamics. This framework leads to various conservative-type stochastic partial differential equations (SPDEs), which represent the fluctuation corrections of hydrodynamic limits of interacting particle systems, characterized by fluctuation-dissipation relations. In general, the fluctuation phenomena of these SPDEs is similar to that of the corresponding interacting particle systems. Taking into account the theory of fluctuating hydrodynamics, we consider a regularized version of Dean-Kawasaki equation, which can be viewed  as fluctuating mean-field limits of the following mean-field systems:
	\begin{equation}\label{IPS}
		\mathrm{d}X_i=-\frac{1}{N}\sum_{j=1}^N\nabla \mathcal{U}(X_i-X_j)\mathrm{d}t+\sqrt{2}\mathrm{d}B_i,\ \ i=1,\dots,N,
	\end{equation}
where $\{B_i\}_{i=1}^N$ represents independent Brownian motions, $\mathcal{U}$ denotes the interaction kernel. Let $\pi_N$ be the empirical measure defined by $\pi_N := \frac{1}{N}\sum_{i=1}^N\delta_{X_i}$. To approximate $\pi_N$, the Dean-Kawasaki equation was proposed by Dean \cite{Dea96} and Kawasaki \cite{Kaw98} based on the covariance structure of the noise. The Dean-Kawasaki equation takes the following form:
	\begin{equation}\label{eq-1.2}
		\partial_t\pi_N=\Delta\pi_N+\nabla\cdot(\pi_N\nabla \mathcal{U}\ast\pi_N)-\sqrt{2}N^{-1/2}\nabla\cdot(\sqrt{\pi_N}\xi),
	\end{equation}
where $\xi$ is a space-time white noise. Note that  (\ref{eq-1.2}) is a conservative supercritical singular stochastic PDE, which is ill-posed in the theories of Hairer's regularity structure \cite{Hai14} and Gubinelli, Imkeller, Perkowski's paracontrolled distribution \cite{GIP15}. The existence and uniqueness (in law) of trivial martingale solution to
	(\ref{eq-1.2}) was shown by Konarovskyi, Lehmann and von Renesse \cite{KLvR19} under the condition that $N$ is a non-negative integer.
	Specifically, the authors proved that the empirical measure of particle system solves the martingale problem, which gives rigorous mathematical meaning to the Dean-Kawasaki equation (\ref{eq-1.2}).
	However, the well-posedness of functional-valued solutions of (\ref{eq-1.2}), in particular, the pathwise uniqueness is challenging, due to the irregularity of the square root function and the nonlocal term even if the noise is sufficiently regular in space.
	In the case of local interaction, the obstacle was resolved by Fehrmann and Gess \cite{FG21}. Concretely,
	the authors considered the following general Dean-Kawasaki equation written by
\begin{equation}\label{eq-1.3}
\partial_t\rho=\Delta\Phi(\rho)-\nabla\cdot\nu(\rho)-\nabla\cdot (\sigma(\rho)\circ\dot{W}^F),
\end{equation}
where $\circ$ represents the Stratonovich integral. The nonlinearity is in the form $\Phi(\xi) = \xi^m$ for all $m \geq 1$ corresponding to the hydrodynamic limit of zero-range process with a mean-local jump rate $\Phi$ (see Kipnis and Landim \cite[Chapters 3 and 5]{KL99}). The nonlinear flux function $\nu(\cdot)$ is of Burgers type. The noise coefficient $\sigma(\cdot)$ is locally 1/2-H\"older continuous and the noise $\dot{W}^F$ is sufficiently regular in space and white in time.

	To the best of our knowledge, the well-posedness of the correlated Dean-Kawasaki equation with nonlocal interactions, especially with singular interaction kernels, still remains open. In this paper, we devote to proving the well-posedness of the following equation:
\begin{equation}\label{eq-1.1}
\partial_t \rho = \Delta \rho - \nabla \cdot (\rho V(t) \ast \rho) - \nabla \cdot (\sqrt{\rho} \circ \dot{W}^F).
\end{equation}
In the above equation, the symbol $\ast$ represents spatial convolution and the interaction kernel $V=-\nabla\mathcal{U}$ is generalized to a singular nonlocal time-dependent function. More precisely, we consider the case of spatial dimension $d\geq2$, and let $T>0$ be a fixed time horizon in the whole context, the singular interaction kernel $V$ in  (\ref{eq-1.1}) is assumed to satisfy that
	\begin{description}
		\item[Assumption (A1)]$V\in L^{p^{*}}\left([0,T];L^p(\mathbb{T}^d;\mathbb{R}^d)\right)$ with  $\frac{d}{p}+\frac{2}{p^{*}}\le1$, $2\le p^*\leq\infty$ and $d<p\le\infty$,
		\item[Assumption (A2)] $\nabla\cdot V\in L^{q^{*}}\left([0,T];L^q(\mathbb{T}^d)\right)$ with $\frac{d}{2q}+\frac{1}{q^{*}}\le1$, $1\le q^*\leq\infty$ and $\frac{d}{2}<q\le\infty$.
	\end{description}

    It should be pointed out that Assumption (A1) cannot be derived from Assumption (A2). We take two-dimensional Biot-Savart kernel as the counterexample. For every $x\in\mathbb{T}^2$, the Biot-Savart kernel is of the form $V(x)=\frac{1}{2\pi}\sum_{k\in\mathbb{Z}^2/{\{0\}}}\frac{k^{\bot}}{|k|^2}e_k(x)$, where $\{e_k\}_{k\in\mathbb{Z}^2}$ is a sequence of Fourier basis of $L^2(\mathbb{T}^2)$. Obviously, $\nabla\cdot V=0$, however $V\in L^p(\mathbb{T}^2;\mathbb{R}^2)$ for any $p<2$, see \cite{BFM}.

	Let ${\rm{Ent}}(\mathbb{T}^d)$ be the space of finite entropy functions defined by (\ref{kk-44}). {\color{black}Let $W^F$ be the driving noise whose precise definition will be provided in (\ref{kk-43}) below.} Based on the above assumptions, the well-posedness of  (\ref{eq-1.1}) can be derived. It reads as follows.
	\begin{theorem}\label{thm-main1} (Uniqueness, cf. Theorem \ref{the-uniq})
		Assume that $V$ satisfies Assumptions (A1) and (A2). Let $\hat{\rho}^1,\hat{\rho}^2\in {\rm{Ent}}\left(\mathbb{T}^{d}\right)$. Let $\rho^1,\rho^2$ be renormalized kinetic solutions of (\ref{eq-1.1}) in the sense of Definition \ref{def-2.4} with initial data $\rho^1(\cdot,0)=\hat{\rho}^1,\rho^2(\cdot,0)=\hat{\rho}^2$. If $\hat{\rho}^1=\hat{\rho}^2\ \text{a.e. in }\mathbb{T}^d$, then
		\begin{equation}
			\mathbb{P}\Big(\sup_{t\in[0,T]}\|\rho^1(\cdot,t)-\rho^2(\cdot,t)\|_{L^1(\mathbb{T}^d)}=0\Big)=1.\notag
		\end{equation}
	\end{theorem}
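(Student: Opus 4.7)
The plan is to establish a pathwise $L^1$-contraction for renormalized kinetic solutions via a stochastic doubling of variables argument in the spirit of Lions-Perthame-Tadmor and its SPDE adaptations by Debussche-Vovelle and Fehrman-Gess. With $\chi^i(x,\xi,t) := \mathbf{1}_{0 < \xi < \rho^i(x,t)}$, one has the identity
\begin{equation*}
\|\rho^1(t) - \rho^2(t)\|_{L^1(\mT^d)} = \|\rho^1(t)\|_{L^1} + \|\rho^2(t)\|_{L^1} - 2\int_{\mT^d}\!\!\int_{\R}\chi^1(x,\xi,t)\chi^2(x,\xi,t)\,d\xi\, dx.
\end{equation*}
Because the equation is in divergence form the total mass is almost surely conserved, so the task reduces to a lower bound on $\int\chi^1\chi^2$. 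To obtain it, I would test the kinetic equation for $\rho^1$ in variables $(x,\xi)$ and for $\rho^2$ in $(y,\eta)$ against a mollifier pair $\varphi_\varepsilon(x-y)\psi_\delta(\xi-\eta)$ and apply the It\^o formula to the product $\chi^1(x,\xi,t)\chi^2(y,\eta,t)$, with the goal of sending $\delta \downarrow 0$ and then $\varepsilon \downarrow 0$.

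For the linear and stochastic parts I would follow Fehrman-Gess verbatim. The Laplacian, after doubling, yields the usual $O(\varepsilon^{-2})$ second-order term, which is dominated by the parabolic defect measures built into the renormalized kinetic formulation; the net contribution has a sign and drops out in the limit $\varepsilon\to 0$. The Stratonovich form of the noise is critical here: when converting to It\^o, the resulting It\^o correction exactly compensates the quadratic covariation generated by applying It\^o's formula to $\chi^1\chi^2$, so that the noise contribution reduces, after integration by parts in $\xi,\eta$, to a martingale plus a remainder that vanishes as $\delta\to 0$. No entropy dissipation estimate on the noise is required, which is the whole reason for working in Stratonovich form.

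The genuinely new step is the non-local drift. After exploiting $\nabla_x\varphi_\varepsilon(x-y) = -\nabla_y\varphi_\varepsilon(x-y)$, the interaction contribution in the doubled equation takes the form
\begin{equation*}
\mathcal{I}_{\varepsilon,\delta}(t) = -\int_s^t\!\!\iint\bigl[\rho^1(x,r)(V(r)*\rho^1)(x,r) - \rho^2(y,r)(V(r)*\rho^2)(y,r)\bigr]\cdot\nabla_x\varphi_\varepsilon(x-y)\,\Psi_\delta\,dx\,dy\,dr,
\end{equation*}
where $\Psi_\delta$ absorbs the $\xi,\eta$-integrations against $\psi_\delta$. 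Splitting the bracket as $(\rho^1(x)-\rho^2(y))(V*\rho^1)(x) + \rho^2(y)\bigl[(V*\rho^1)(x)-(V*\rho^2)(y)\bigr]$ decomposes $\mathcal{I}_{\varepsilon,\delta}$ into a transport-type piece, estimated in $L^1$ by $\|V\|_{L^{p^*}_tL^p_x}$ together with the a priori bounds on $\rho^i$ supplied by the energy inequality exactly at the LPS scaling (mirroring the classical Ladyzhenskaya-Prodi-Serrin uniqueness argument for Navier-Stokes), and a commutator piece $V*(\rho^1-\rho^2)$. Integrating $\nabla_x\varphi_\varepsilon$ by parts off the test function and onto $\rho^2\,V*(\rho^1-\rho^2)$ generates a factor $\nabla\cdot V*(\rho^1-\rho^2)$, and this is precisely the point where hypothesis (A2) enters and is essential.

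Passing to the limits $\delta\to 0$ then $\varepsilon\to 0$, combining with the $\|\rho^1-\rho^2\|_{L^1}$ identity, and applying a Burkholder-Davis-Gundy bound to the martingale term to move $\sup_{r\in[s,t]}$ inside the expectation, one obtains
\begin{equation*}
\mE\sup_{r\in[s,t]}\|\rho^1(r)-\rho^2(r)\|_{L^1(\mT^d)} \le C\int_s^t \Theta(r)\,\mE\sup_{u\in[s,r]}\|\rho^1(u)-\rho^2(u)\|_{L^1(\mT^d)}\,dr,
\end{equation*}
with $\Theta\in L^1(s,T)$ built from $\|V\|_{L^{p^*}_tL^p_x}$ and $\|\nabla\cdot V\|_{L^{q^*}_tL^q_x}$; Gronwall then forces $\rho^1=\rho^2$ almost surely in the supremum sense, yielding the pathwise statement after a standard continuity argument. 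I expect the main obstacle to lie squarely in the non-local commutator: the LPS exponents are at the endpoint of scaling, so there is essentially no room to spare when estimating $(V*\rho)(x)-(V*\rho)(y)$ for $|x-y|\le\varepsilon$, and it is this tightness that forces the supplementary integrability assumption (A2) on $\nabla\cdot V$ foreshadowed in the introduction.
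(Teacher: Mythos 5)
Your overall route---doubling of variables with mollification in $(x,\xi)$, the identity $\|\rho^1-\rho^2\|_{L^1}=\int(\chi^1+\chi^2-2\chi^1\chi^2)$, treating the diffusive, measure and noise contributions as in Fehrman--Gess, and splitting the interaction term into $(\rho^1-\rho^2)V\ast\rho^1$ plus $\rho^2\,V\ast(\rho^1-\rho^2)$ with Assumption (A2) entering through $\nabla\cdot V\ast(\rho^1-\rho^2)$---is exactly the paper's strategy. The genuine gap is in your closing step. The bound you obtain for the surviving kernel contribution is not of the form $\Theta(r)\|\rho^1-\rho^2\|_{L^1}$ with a \emph{deterministic} $\Theta\in L^1(s,T)$ built only from $\|V\|_{L^{p^*}_tL^p_x}$ and $\|\nabla\cdot V\|_{L^{q^*}_tL^q_x}$: estimating $\nabla\cdot(\rho^2\,V(r)\ast(\rho^1-\rho^2))$ via the product rule and Gagliardo--Nirenberg necessarily produces the factors $\|\nabla\sqrt{\rho^2}(r)\|_{L^2(\mathbb{T}^d)}^{1+d/p}$ and $\|\nabla\sqrt{\rho^2}(r)\|_{L^2(\mathbb{T}^d)}^{d/q}$, which are \emph{random} processes controlled only through the entropy estimate $\mathbb{E}\int_s^T\|\nabla\sqrt{\rho^2}\|_{L^2}^2<\infty$. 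Consequently $\mathbb{E}\big[R(r)\sup_{u\le r}\|\rho^1-\rho^2\|_{L^1}\big]$ cannot be factored as $\Theta(r)\,\mathbb{E}\sup_{u\le r}\|\rho^1-\rho^2\|_{L^1}$, and the deterministic Gronwall argument (with or without BDG, which in fact is not even needed since the martingale terms vanish in the regularization limit) does not close. This is precisely the point the paper flags as its main departure from the $V\equiv 0$ case: one must localize by the stopping time $\tau_N=\inf\{t:\int_s^t\|\nabla\sqrt{\rho^2}\|_{L^2}^2>N\}$, so that $\int_s^{\tau_N}R$ is bounded almost surely by H\"older (this is where the exponent relations $\tfrac{2p}{p-d}\le p^*$, $\tfrac{2q}{2q-d}\le q^*$ are used), invoke the stochastic Gronwall lemma (Lemma \ref{lem-4.1}) on $[s,\tau_N]$, and then let $N\to\infty$ using $\mathbb{P}(\tau_N\le T)\to 0$ from the entropy bound and Chebyshev. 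Without this device your final display is unjustified.

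A secondary soft spot: you say the ``transport-type piece'' $(\rho^1(x)-\rho^2(y))(V\ast\rho^1)(x)$ is ``estimated in $L^1$ at LPS scaling.'' A direct estimate of this piece would leave you with $\int|\nabla(\rho^1-\rho^2)\cdot V\ast\rho^1|$, which cannot be bounded by $\|\rho^1-\rho^2\|_{L^1}$ and hence cannot enter a Gronwall argument (this is the same obstruction discussed in Remark \ref{rem-4.6} for the commutator). What actually saves this term is a Kruzhkov-type sign cancellation: after the limits it reads $\int\sgn(\rho^2-\rho^1)\,\nabla\cdot\big((\rho^1-\rho^2)V\ast\rho^1\big)$, and approximating $\sgn$ by $\sgn^\delta$ one finds $(\sgn^\delta)'(\rho^2-\rho^1)(\rho^1-\rho^2)$ is bounded and supported on $\{0<|\rho^1-\rho^2|<\delta\}$, so the term vanishes identically (the paper's $\tilde J_{11}=0$). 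Only the commutator piece survives and feeds the (stochastic) Gronwall inequality.
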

	
	\begin{theorem}\label{thm-main2}
(Existence, cf. Theorem \ref{the-existence})
Assume that $V$ satisfies Assumption (A1). Let $\hat{\rho}\in\text{{\rm{Ent}}}\left(\mathbb{T}^{d}\right)$. Then there exists a stochastic basis $(\tilde{\Omega},\tilde{\mathcal{F}},\{\tilde{\mathcal{F}}(t)\}_{t\in[0,T]},\tilde{\mathbb{P}})$, {\color{black} a trace-class Brownian motion $\tilde{W}^{F}$ on  $L^2(\mathbb{T}^{d})$,} and a process $\tilde{\rho}\in L^1(\tilde{\Omega};L^1([0,T]\times \mathbb{T}^d))$, which is a renormalized kinetic solution of (\ref{eq-1.1}) in the sense of Definition \ref{def-2.4} with the equation (\ref{eq-2.6}) holds for almost every $t\in [0,T]$ and $\tilde{\rho}(\cdot,0)=\hat{\rho}$. If $V$ is further assumed to satisfy Assumption (A2), then there exists a unique probabilistically strong  renormalized kinetic solution of (\ref{eq-1.1}).
	\end{theorem}
Regarding the assumptions of the singular interaction kernels, we make some comments. Assumption (A1) is the Ladyzhenskaya-Prodi-Serrin (LPS) condition, which was firstly proposed by Prodi \cite{Pro59}, Serrin \cite{Ser62}, and Ladyzhenskaya \cite{Lad67} as a regularity criterion when studying the uniqueness of the 3D Navier-Stokes equations. \cite{CI07} provides a representation of the solution of the Navier-Stokes equations by using the stochastic flow. This leads to the study of stochastic differential equations (SDEs) with a singular drift that satisfies the LPS condition. Various literature in this direction includes \cite{KR05, Zha05,Zha11,XXZZ20, RZ21b, RZ23}.  Later, R\"ockner and Zhang \cite{RZ18} extended
the results of \cite{KR05} to distribution-dependent SDEs, which can be interpreted as large N limits
of interacting particle systems (\ref{IPS}). As fluctuating mean-field limits of the same model (\ref{IPS}), it is
natural to assume the SPDE (\ref{eq-1.1}) to have an LPS-type interaction kernel, paralleling the theory of
singular SDEs and distribution-dependent SDEs. Finally, we point out that the condition on $\nabla\cdot V$
is for technical reasons, which will be explained in detail at the end of the proof of uniqueness (see
Section \ref{sec-4})


Employing a similar approach akin to that used for (\ref{eq-1.1}), we can establish the well-posedness of the following conservative SPDE given by
\begin{equation}\label{isingkackawasaki}
	\partial_t\rho=\Delta\rho+\beta\nabla\cdot((1-\rho^2)\nabla J\ast\rho)-\gamma^{1/2}\nabla\cdot(\sqrt{1-\rho^2}\circ\xi_{\delta}).
\end{equation}
Here, $J$ stands for the Kac potential, {\color{black}$\beta$ is a constant depending on temperature}, {\color{black}the parameter $\gamma>0$ represents the noise intensity,}
 and the noise $\xi_{\delta}$ is white in time and correlated in space with a correlation length $\delta$ representing the simulation grid size.
The equation (\ref{isingkackawasaki}) proposed by \cite{GJE99} is related to the nonlinear fluctuations of Kawasaki dynamical Ising-Kac model (see subsection \ref{subsec-1.1} (2) for details). The well-posedness of (\ref{isingkackawasaki}) is formulated as follows.
 		\begin{theorem}\label{thm-main3}
 (Well-posedness, cf. Theorem \ref{the-7.11})
 		For any spatial dimension $d\ge1$, suppose that $\nabla J\in C^{\infty}(\mathbb{T}^d;\mathbb{R}^d)$. Let the initial data $\hat{\rho}\in\overline{{\rm{Ent}}}\left(\mathbb{T}^{d}\right)$ that is defined by (\ref{eqq3}), then there exists {\color{black}a unique probabilistically strong  renormalized kinetic solution} to (\ref{isingkackawasaki}) with initial data $\hat{\rho}$.
 	\end{theorem}

	\subsection{Applications}\label{subsec-1.1}
In this subsection, two important applications of Theorem \ref{thm-main2} and Theorem \ref{thm-main3} are presented.

{\bf(1) An application to fluctuations of  mean-field systems.} We mention
two landmark works by Wang, Zhao, Zhu \cite{WZZ23} and Chen, Ge \cite{CG22}. They studied the Gaussian fluctuations and large deviations for singular interacting mean-field systems
\begin{equation}\label{eq-1.4}
	\mathrm{d}X_i=\frac{1}{N}\sum_{j\neq i}V(X_i-X_j)\mathrm{d}t+\sqrt{2}\mathrm{d}B_t^i,\ i=1,\cdots,N,
\end{equation}
respectively. Precisely, \cite{WZZ23} proved the Gaussian fluctuation of the empirical measure $\pi^N$ around its mean-field limit
\begin{equation*}
	\partial_t\bar{\rho}=\Delta\bar{\rho}-\nabla\cdot(\bar{\rho}V\ast\bar{\rho}),
\end{equation*}
which reads as
\begin{equation*}
	\sqrt{N}(\pi^N-\bar{\rho})\to\bar{\rho}^1\quad \text{as\ $N\to\infty$}.
\end{equation*}
Here, $\bar{\rho}^1$ satisfies the equation
\begin{equation}\label{eq-1.5}
	\partial_t\bar{\rho}^1=\Delta\bar{\rho}^1-\nabla\cdot(\bar{\rho}V\ast\bar{\rho}^1)-\nabla\cdot(\bar{\rho}^1V\ast\bar{\rho})-\sqrt{2}\nabla\cdot(\sqrt{\bar{\rho}}\xi),
\end{equation}
with $\xi$ being a space-time white noise.
Let $\xi_{K(N)}$ be the ultra-violet noise, which converges to $\xi$ as $K(N)\rightarrow\infty$, $N\rightarrow\infty$. An informal computation shows that the Dean-Kawasaki equation with $\xi_{K(N)}$
\begin{equation}\label{rr-1}
	\partial_t\rho^N+\nabla\cdot(\rho^NV\ast\rho^N)=\Delta\rho^N-\sqrt{\frac{2}{N}}\nabla\cdot(\sqrt{\rho^N}\circ\xi_{K(N)})
\end{equation}
fulfills the same Gaussian fluctuation $\sqrt{N}(\rho^{N}-\bar{\rho})\to\bar{\rho}^1$, where $\bar{\rho}^1$ fulfills (\ref{eq-1.5}).

Regarding to \cite{CG22}, the authors established the large deviations for the empirical measure of the two-dimensional interacting particle systems \eqref{eq-1.4} with the rate function informally given by
\begin{align}\notag
	I^0(\rho)&=\sup_{\psi\in C^{\infty}(\mathbb{T}^2\times[0,T])}\Big(
	\int_{\mathbb{T}^2}\psi(x,T)\rho(x,T)\mathrm{d}x-\int_{\mathbb{T}^2}\psi(x,0)\rho(x,0)\mathrm{d}x-\int^T_0\int_{\mathbb{T}^2}\rho\partial_t\psi \mathrm{d}x\mathrm{d}t\\
	\label{eq-1.7}
	& \quad\quad\quad\quad\quad\quad -\int^T_0\int_{\mathbb{T}^2}\rho \Delta\psi \mathrm{d}x\mathrm{d}t-\int^T_0\int_{\mathbb{T}^2}(\rho V\ast \rho)\nabla \psi \mathrm{d}x\mathrm{d}t-\frac{1}{2}\int^T_0\int_{\mathbb{T}^2}\rho|\nabla \psi|^2\mathrm{d}x\mathrm{d}t
	\Big).
\end{align}
As discussed in \cite[Theorem 39]{FG23}, Fehrman and Gess proved that the rate function for large deviations of the Dean-Kawasaki equation is governed by \eqref{eq-1.7} as well when $V=0$. However, upon closely following Fehrman and Gess's argument line by line, the result remains unchanged in the presence of an interaction kernel (at least when the kernel is ``nice''  ). Consequently, the fluctuations of (\ref{eq-1.4}) can be predicted by (\ref{rr-1}).

In fact, aside from the result that the Dean-Kawasaki equation (\ref{rr-1}) preserves the fluctuations feature of the mean-field system (\ref{eq-1.4}), the regularity of the former is $L^2_tW^{1,1}_x$ (see the entropy estimate in Proposition \ref{pro-5.4}) that is stronger than $L^2_tH^{-\frac{d}{2}-}_x$ for (\ref{eq-1.4}).
Thus, it is a good alternative to study the Dean-Kawasaki equation (\ref{rr-1}) to capture the fluctuations of (\ref{eq-1.4}). Applying our result Theorem \ref{thm-main2} to  (\ref{rr-1}), we obtain its well-posedness which plays a fundamental role in studying fluctuations.

Finally, we mention that some quantitative error analysis has been built between discrete and continuous objects. For example,
\cite{DKP22} established a weak error estimate between the regularized Dean-Kawasaki equation and Brownian particles via a duality argument.  \cite{CF23,CFIR23} provided error estimate results for the discretized Dean-Kawasaki equation, demonstrating that structure-preserving discretizations can closely approximate the density fluctuations of $N$ non-interacting diffusing particles to an arbitrary order in $N^{-1}$ within appropriate weak metrics. Thus, it is feasible to adopte a SPDE framework to mirror the fluctuations of the $d$-dimensional $N$-particle SDE systems (\ref{eq-1.4}), especially in the context of mean-field systems with $L^p$-type interactions (refer to \cite{HRZ22,WZZ22,HHMT20}). This parallel allows the study of SPDE \eqref{rr-1} to serve as a theoretical foundation for understanding the dynamics of the corresponding particle system.

{\bf(2) An application to nonlinear fluctuations of dynamical Ising-Kac model.}
The Ising model, originally proposed by Ising in  \cite{EI25}, serves as a fundamental model in statistical mechanics for exploring ferromagnetism. This model involves spins arranged on a lattice, where each spin interacts with its nearest neighbors. A variant of the Ising model called the Ising-Kac model was introduced to recover the van der Waals theory of phase transition,
in which each spin interacts with all other spin variables in a
large ball around its base point (see \cite{HKU}). Two main dynamics for the Ising-Kac model are Glauber dynamics and Kawasaki dynamics. The elementary events in the former are spin-flips (i.e. changes of sign of a single spin), while, for the latter, two spins exchange their positions (more details can be found in \cite{Gla63, Spo12}). For both dynamics, it is an active field of research to derive the macroscopic behavior from microscopic model by hydrodynamical scaling limit.
In \cite{GJE99}, it was conjectured that both the Glauber and the Kawasaki dynamics have anomalous fluctuating behaviors, i.e. nonlinear fluctuations, in a neighborhood of the critical temperature. Moreover, their nonlinear fluctuations are described by $\Phi^4_d$ and Cahn-Hilliard equation, respectively.
Recently, the conjecture on the Glauber dynamics in
1, 2 and 3 dimensions is completely settled; see \cite{BPRS94,FR95} for $d=1$, \cite{MW17} for $d=2$, and \cite{GMW23} for $d=3$.
However, the conjecture on the nonlinear fluctuations of Kawasaki dynamics remains widely open, and only some progress in 1 dimension is made by \cite{IM18}.
 The present paper paves a step on this conjecture in a certain sense. In the following, we make some explanations.


In light of the theory of fluctuating hydrodynamics, a continuous analogue (a conservative SPDE) is
conjectured to have the same scaling limits as microscopic particle system. A formal analysis from \cite{GJE99} shows that the conservative SPDE related to  the Ising-Kac model in one spatial dimension is the equation (\ref{isingkackawasaki}).
Precisely,
 for any $\gamma>0$ and $a\in\mathbb{R}$, let $\beta=1+a\gamma^{2/3}$ and define
 the rescaled density field
	\begin{equation*}
		\rho_{\gamma}(x,t):=\gamma^{-1/3}\rho(\gamma^{-1/3}x,\gamma^{-4/3}t)
	\end{equation*}
with $\rho$ satisfying (\ref{isingkackawasaki}).  $\rho_{\gamma}$ is conjectured to converge to the conservative stochastic Cahn-Hilliard equation
	\begin{equation}\label{cahnhilliard}
		\partial_tu=-\partial_{xx}^2(\partial_{xx}^2u-u^3+au)-\partial_x\xi, 	
	\end{equation}
	as $\gamma\rightarrow0$. It implies that (\ref{isingkackawasaki}) has the same nonlinear fluctuations phenomenon as Ising-Kac model when the temperature is near criticality. Therefore, (\ref{isingkackawasaki}) can be regarded as a phenomenological model simulating the Kawasaki dynamics for the Ising-Kac model.

However, a rigorous proof of the conjecture still remains open, and we emphasize that any rigorous
study of (\ref{isingkackawasaki}) is challenging due to the lack of well-posedness.   Thanks to the structural resemblance with the Dean-Kawasaki equation, in this paper, we  apply our main results to establish strong
well-posedness for the Ising-Kac-Kawasaki equation (\ref{isingkackawasaki}).

	\subsection{Key argument and technical comment}	
		
	As stated in \cite{FG21} and \cite{FG23}, due to the singularity at zero of the It\^{o} correction term $\frac{1}{4\rho}\nabla\rho$, it is not clear how to define the concept of classical weak solutions to (\ref{eq-1.3}). To restrict the value of the solution away from zero, \cite{FG21} employed the kinetic approach and introduced the concept of renormalized kinetic solutions. This concept was firstly proposed by Lions, Perthame, and Tadmor in \cite{LPT94} when studying general multi-dimensional scalar conservation laws. The key insight is that the kinetic function has three variables $(t,x,\xi)$, which stand for time, spatial and velocity variable, respectively. Then, the test function for the renormalized kinetic solution can have compact support with respect to the velocity variable. As a result, it keeps the kinetic solution away from its singularities.
 This idea has been successfully applied to conservative stochastic PDEs, we refer readers to the works of Gess and Souganidis  \cite{GS17}, Fehrman and Gess \cite{FG19}, and Dareiotis and Gess \cite{DG20} for further details. In the present paper, we also adopt the concept of renormalized kinetic solutions.

 	Moreover, we provide further commentary on technical details involved in studying the well-posedness of the fluctuating Ising-Kac-Kawasaki equation (\ref{isingkackawasaki}).
 As discussed above, the entropy estimate plays a central role. Due to the structural similarity between (\ref{isingkackawasaki}) and (\ref{eq-1.1}), we can anticipate it holds. Indeed, by choosing $\Psi(\zeta)=\int^{\zeta}\log\left(\frac{1+\zeta'}{1-\zeta'}\right)d\zeta'$, the corresponding entropy dissipation estimate for (\ref{isingkackawasaki}) holds as well, see Proposition \ref{pro-7.4}.
 Then, following the approach used in (\ref{eq-1.1}), an analogue of Theorems \ref{the-uniq} and \ref{the-6.4} can be established for (\ref{isingkackawasaki}). Finally, the results of Theorem \ref{thm-main1} and \ref{thm-main2} can be partially extended to (\ref{isingkackawasaki}).
 In addition, we emphasize some differences between (\ref{isingkackawasaki}) and (\ref{eq-1.1}). The first is that the kinetic solution of (\ref{eq-1.1}) is nonnegative, while, the solution of(\ref{isingkackawasaki}) takes values in the interval ${\black (-1,1)}$, it causes the preservation of the $L^1(\mathbb{T}^d)$-norm to be invalid for the latter. The second is that the derivative of the diffusion coefficient $\sqrt{\zeta}$ for (\ref{eq-1.1}) only has one singularity at $\zeta=0$
 , whereas the derivative of the diffusion coefficient $\sqrt{1-\zeta^2}$ for (\ref{isingkackawasaki}) has two singularities at $\zeta=+1$ and $\zeta=-1$. It leads to some technical differences in proving the tightness of their approximating equations. For (\ref{eq-1.1}), a new $L^{1}\left([0,T];L^{1}\left(\mathbb{T}^{d}\right)\right)$-equivalent topology is constructed with the aid of a sequence of truncation functions that keep the solution away from zero.
 However, it seems that this method is difficult to apply to the equation (\ref{eqq1}), even if the truncation functions are changed to force the solution away from $-1$ and $+1$.
 We solve this problem by defining a transformation such that the singular term of the new equation has only one singularity. Thus, we can apply the method in Section \ref{subsec-5.3} to obtain the tightness of the new equation. By virtue of this result and the property of the transformation, we finally conclude the tightness of the original equation. For the details, see Section \ref{sec-r1}.

\subsection{Comparison with \cite{FG21}}
In this paragraph, we outline the primary distinctions between our paper and \cite{FG21} from the following three aspects. (i) The equation. Obviously, our equation has additional nonlocal interaction terms compared with \cite{FG21}. Coming up with suitable conditions for the interaction terms to ensure the existence of a solution is the central challenge.  Then, all the estimations of the kernel term presented in Section \ref{sec-3} are definitely new. The key observation is that the nonlocal interaction term is associated with the convolution type distributional dependence SDEs, hence it is natural to impose an LPS-type condition on the kernel.
(ii)The technique. Compared with \cite{FG21}, all of the technical adjustments are to deal with the difficulties arising from nonlocal interaction terms. Due to the irregularity of the nonlocal interaction kernel, the $L^m$-theory ($m\geq 2$) of (\ref{eq-1.3}) established by \cite{FG21} is no longer applicable to (\ref{eq-1.1}). As an alternative, we make an entropy dissipation estimate (see Proposition \ref{pro-5.4}) for (\ref{eq-1.1}) assuming that the kernel $V$ satisfies Assumption (A1).
Such an estimate not only provides the regularity of the square root of the solution, known as Fisher information, but also suggests a potential link between LPS-type conditions in fluid dynamics and the Boltzmann entropy in the fluctuating hydrodynamics. Moreover, it also plays a pivotal role in proving both the uniqueness and the existence of renormalized kinetic solutions to (\ref{eq-1.1}). For the uniqueness, the kernel terms cannot be expected to converge to zero or even be controlled since cut-off functions cannot bound the nonlocal term, which is different from \cite{FG21} where all terms vanish. To solve this obstacle, we show that the interaction
kernel terms in (\ref{eq-1.1}) can be controlled by the square root of the solution under Assumptions (A1) and (A2). Consequently, with the aid of entropy estimate, the uniqueness of (\ref{eq-1.1}) is achieved by applying a stochastic Gronwall's lemma. A comprehensive proof can be found in Theorem \ref{the-uniq}. Regarding the existence of renormalized kinetic solutions to (\ref{eq-1.1}), we also introduce a sequence of approximating equations with regular coefficients and smooth kernel similarly to \cite{FG21}. To find a limiting kinetic measure, the method used by \cite{FG21}  relies heavily on the uniformly $L^m$ estimate of  the approximating equations on the smoothing parameters to ensure the uniformly boundedness of the corresponding kinetic measures in the space of bounded Borel measures over $[0, T] \times \mathbb{T}^d \times \mathbb{R}$ (with norm defined by the total variation of measures). However, this method is not applicable to us due to the lack of uniformly $L^m$ estimate. 
Instead, we make use of the entropy estimate to show that the kinetic measures are bounded in the space of nonnegative bounded Borel measures over $[0, T] \times \mathbb{T}^d \times [0, M]$ for any $M \geq 1$. Subsequently, the existence of a limiting kinetic measure can be proved by a diagonal argument. For more details, see Theorem \ref{the-existence} below.
(iii)The applications. 
The well-posedness of (\ref{eq-1.1}) allows us to study the fluctuations of mean-field systems with singular interactions, especially the Kawasaki dynamics for the Ising-Kac model. The specific examples have been presented in subsection 1.1.

	\subsection{Comments on the literature}
	The existence of solutions to corrected Dean-Kawasaki equations with smooth interacting kernel has been proved by von Renesse and Sturm \cite{vRS09} by Dirichlet forms techniques, where the nonlocal interacting term is replaced by a nonlinear operator. Later, by correcting the drift term of the Dean-Kawasaki equation, the authors of \cite{AvR10} and \cite{KvR19} constructed a solution. Cornalba, Shardlow and Zimmer \cite{CSZ19,CSZ20} derived a suitably regularized Dean-Kawasaki model of wave equation type in one dimension, which corresponds to second-order Langevin dynamics. In the case of local interaction, Fehrman and Gess \cite{FG21} obtained the well-posedness of functional-valued solutions of the Dean-Kawasaki equation with correlated noise. Building on this framework in  \cite{FG21}, the authors also addressed small noise large deviations in \cite{FG23}. Furthermore, Clini and Fehrman  \cite{CF23a} expanded this research by developing a central limit theorem for the nonlinear Dean-Kawasaki equation with correlated noise.

	In the framework of kinetic solution, the study of well-posedness of stochastic conservative law has attracted significiant interests. Debussche and Vovelle \cite{DV10} studied the Cauchy problem in any dimension and obtained the existence and uniqueness of the kinetic solutions. Later, Gess and Souganidis \cite{GS17}, Fehrman and Gess \cite{FG19}, and Dareiotis and Gess \cite{DG20} extended the notion of kinetic solution to parabolic-hyperbolic stochastic PDE with conservative noise. Recently, the nonlocal conservative stochastic PDE was considered by Fehrman, Gess and Gvalani \cite{FGR} and  the mean-field stochastic PDE has been studied by Gess, Gvalani and Konarovskyi \cite{GGK}. For literatures on stochastic nonlinear diffusion equations, we refer to \cite{BBDPR06,BDPR08a,BDPR08b,BDPR09,BDPR16}.

	The literature contributed to Gaussian fluctuations of discrete interacting particle systems is notably comprehensive. Seminal studies such as \cite{FPV87} investigated the zero-range process, while \cite{Rav92} focused on the symmetric simple exclusion process, and \cite{JM18} examined the weakly asymmetric exclusion process.
The exploration of the mean field limit for singular interaction kernels has also yielded significant results. Key contributions in this area include the study of the vortex model by  \cite{Osa86, FHM14}, and the analysis of more general singular kernels in works such as \cite{JW18, BJW20, Ser20, RS}.

	\subsection{Structure of the paper}
	This paper is organized as follows. In Section \ref{sec-2}, basic notations, assumptions on the noise and kernel, and the definition of renormalized kinetic solutions of \eqref{eq-1.1} are given. Section \ref{sec-3} is dedicated to presenting various estimates associated with the nonlocal kernel. 
	The uniqueness of the renormalized kinetic solutions of equation \eqref{eq-1.1} is rigorously proved in Section \ref{sec-4}. Section \ref{sec-5} shifts focus to a sequence of approximating equations for \eqref{eq-1.1}, where we derive an entropy estimate and establish $L^m(\mathbb{T}^d)-$norm estimates in Section \ref{subsec-5.1}. The existence of renormalized kinetic solutions to these approximating equations is obtained in Section \ref{subsec-5.2}. Furthermore, Section \ref{subsec-5.3} addresses the $L^1([0,T];L^1(\mathbb{T}^d))$-tightness of solutions for this sequence of approximating equations. In Section \ref{sec-6}, we conclusively prove the existence of renormalized kinetic solutions of equation \eqref{eq-1.1}. Eventually, we adjust the proof of \eqref{eq-1.1} to show the well-posedness of the fluctuating Ising-Kac-Kawasaki equation (\ref{isingkackawasaki}) in Section \ref{sec-7}.

\section{Preliminaries}\label{sec-2}
\subsection{Notations}
Throughout the paper, {\black $\mathbb{T}^d=[-1/2,1/2]^d$} denotes a $d-$dimensional torus with volume 1.
Let $\nabla$ represent the derivative operator and $\nabla\cdot$ be the divergence operator with respect to the space variable $x\in\mathbb{T}^d$. In particular,
for any $V:[0,T]\times\mathbb{T}^d\to\mathbb{R}^d$, $\nabla\cdot V$ stands for the spatial divergence of $V$.
Let $\|\cdot\|_{L^p(\mathbb{T}^d)}$ denote the norm of Lebesgue space $L^p(\mathbb{T}^d)$ (or $L^p(\mathbb{T}^d;\mathbb{R}^d)$) for integer $p\in [1,\infty]$. The inner product in $L^2(\mathbb{T}^d)$ will be denoted by
$(\cdot,\cdot)$.
Let $C^{\infty}(\mathbb{T}^d\times (0,\infty))$ denote the space of infinitely differentiable functions on $\mathbb{T}^d\times (0,\infty)$. $C^{\infty}_c(\mathbb{T}^d\times (0,\infty))$ contains all {\color{black}infinitely differentiable functions} with compact supports on $\mathbb{T}^d\times (0,\infty)$.
For a non-negative integer $k$ and $p\in [1,\infty]$, denote by $W^{k,p}(\mathbb{T}^d)$ the usual Sobolev space on $\mathbb{T}^d$. Let
$H^a(\mathbb{T}^d)=W^{a,2}(\mathbb{T}^d)$, and $H^{-a}(\mathbb{T}^d)$ stands for the topological dual of $H^a(\mathbb{T}^d)$.
$C^{k}_{loc}(\mathbb{R})$ denotes {\color{black}$k$ times continuously differentiable functions} on any compact sets in $\mathbb{R}$. Let the bracket $\langle\cdot,\cdot\rangle$ stand for {\color{black}the duality pairing between}  $C^{\infty}(\mathbb{T}^d)$ and the space of distributions over $\mathbb{T}^d$.

Let $X$ be a real Banach space with norm $\|\cdot\|_X$. The space $L^p([0,T];X)$ denotes the standard Lebesgue space, and $W^{1,p}([0,T];X)$ denotes the standard Sobolev space.
In the context, without confusion, for $1\leq p\leq \infty$ we denote  $L^p(\mathbb{T}^d;\mathbb{R})$ or $L^p(\mathbb{T}^d;\mathbb{R}^d)$ by $L^p(\mathbb{T}^d)$,
$L^p([0,T]\times\mathbb{T}^d)=:L^p([0,T];L^p(\mathbb{T}^d;\mathbb{R}))$, $L^p([0,T]\times\mathbb{T}^d;\mathbb{R}^d)=:L^p([0,T];L^p(\mathbb{T}^d;\mathbb{R}^d))$
and $C([0,T])=:C([0,T];\mathbb{R})$.
Also, we will encounter
 integrals on a space $Z$ ($Z$ might be $[0,T]\times \mathbb{T}^d$, $[0,T]\times \mathbb{T}^d\times \mathbb{R}$, $[0,T]\times (\mathbb{T}^d)^2\times \mathbb{R}^2$ and so on ). For simplicity, we abbreviate all integrals $\int_Z f\mathrm{d}z$ to $\int_Z f$. In addition, we always use $\nabla f$ to denote the weak derivative of $f$ with respect to the space variable.

In the sequel, the notation $a\lesssim b$ for $a,b\in \mathbb{R}$  means that $a\leq \mathcal{D}b$ for some constant $\mathcal{D}> 0$ independent of any parameters.
We employ the letter $C$  to denote any constant that can be explicitly computed in terms of known quantities. The exact value denoted by $C$ may change from line to line.



\subsection{Assumptions}

Let $(\Omega,\mathcal{F},\mathbb{P},\{\mathcal{F}_t\}_{t\in
	[0,T]},(\{B^k(t)\}_{t\in[0,T]})_{k\in\mathbb{N}})$ be a stochastic basis. Without loss of generality, here the filtration $\{\mathcal{F}_t\}_{t\in [0,T]}$ is assumed to be complete and $\{B^k(t)\}_{t\in[0,T]},k\in\mathbb{N}$, are independent ($d$-dimensional)  $\{\mathcal{F}_t\}_{t\in [0,T]}-$Wiener processes. We use $\mathbb{E}$ to denote the expectation with respect to $\mathbb{P}$. Let the sequence $\{f_k\}_{k\geq 1}$ consist of functions $f_k \in C^1(\mathbb{T}^d;\mathbb{R})$ such that
  \begin{equation*}
	F_1=\sum_{k=1}^{\infty}|f_k|^2,\quad
	F_2=\frac{1}{2}\sum_{k=1}^{\infty}\nabla f_k^2,\quad
	F_3=\sum_{k=1}^{\infty}|\nabla f_k|^2
\end{equation*}
are continuous functions on $\mathbb{T}^d$, and satisfy $\nabla\cdot F_2=\frac{1}{2}\Delta F_1=0$ on $\mathbb{T}^d$.

Define
\begin{align}\label{kk-43}
W^F=\sum_{k=1}^{\infty}f_k(x)B_t^k.
\end{align}
{\black A representative example satisfying the above assumptions is the so-called ultra-violet divergence noise, which admits the spectral expansion
$$
\xi^a(t,x)=\sum_{k\in\mathbb{Z}^d} a_k\left(\sin(2\pi k\cdot x)\,B_t^k+\cos(2\pi k\cdot x)\,W_t^k\right),
$$
where $\{B^k,W^k\}_{k\in\mathbb{Z}^d}$ are independent Brownian motions and $\{a_k\}_{k\in\mathbb{Z}^d}\subset\mathbb{R}$ are coefficients satisfying $\sum_{k\in\mathbb{Z}^d}|k|^2a_k^2<\infty$. In this case, one obtains
\begin{equation*}
    F_1=\sum_{k\in\mathbb{Z}^d} a_k^2,\quad F_2=0,\quad F_3=\sum_{k\in\mathbb{Z}^d}|k|^2a_k^2.
\end{equation*}
Such ultra-violet divergence noise plays a fundamental role in the analysis of fluctuations under singular scaling limits; see \cite[Remark~2.3]{FG21} and \cite[Section~3]{DFG}.
}

\subsection{Renormalized kinetic solution to the Dean-Kawasaki equation}
In this paper, we consider the Dean-Kawasaki equation
\begin{equation}\label{mainspde}
\mathrm{d}\rho=\Delta\rho \mathrm{d}t-\nabla\cdot(\rho V\ast\rho)\mathrm{d}t-\nabla\cdot(\sqrt{\rho}\circ \mathrm{d}W^F),
\end{equation}
where $W^F$ is defined by (\ref{kk-43}).
Then the Stratonovich equation \eqref{mainspde} is formally equivalent to the  It\^o equation
\begin{equation}\label{eq-2.2}
\mathrm{d}\rho=\Delta\rho \mathrm{d}t-\nabla\cdot(\rho V\ast\rho)\mathrm{d}t-\nabla\cdot(\sqrt\rho \mathrm{d}W^F)+\frac{1}{8}\nabla\cdot(F_1\rho^{-1}\nabla\rho+2F_2)\mathrm{d}t.
\end{equation}

For a given nonnegative solution $\rho$ of \eqref{eq-2.2}, we define the kinetic function $\chi: \mathbb{T}^{d}\times\mathbb{R}\times[0,T]\to\{0,1\}$ of $\rho$ as
\begin{align*}
\chi(x,\xi,t):=\mathbf{1}_{\{0<\xi<\rho(x,t)\}}.
\end{align*}
Formally, we have the following distributional equalities
\begin{align}
\nabla\chi=\delta_{0}(\xi-\rho)\nabla\rho, \quad \partial_{\xi}\chi=\delta_{0}(\xi)-\delta_{0}(\xi-\rho)\quad {\rm{and}}\ \rho=\int_{\mathbb{R}}\chi \mathrm{d}\xi.
\end{align}
 Then, the kinetic function $\chi$ of $\rho$ satisfies the equation
\begin{align}\label{eq-2.3}
\partial_t\chi=&\ \nabla\cdot\left(\delta_{0}(\xi-\rho)\nabla\rho\right)+\frac{1}{8}\nabla\cdot\left(\delta_{0}(\xi-\rho)\left(F_1\xi^{-1}\nabla\rho+2F_2\right)\right)-\frac{1}{4}\partial_{\xi}\left(\delta_{0}(\xi-\rho)\left(\nabla\rho\cdot F_2+2\xi F_3\right)\right)\notag\\
&+\partial_{\xi}q-\delta_{0}(\xi-\rho)\nabla\cdot(\rho V\ast\rho)-\delta_{0}(\xi-\rho)\nabla\cdot(\sqrt\rho \dot{W}^F),
\end{align}
where\  $q=4\delta_0(\xi-\rho)\xi|\nabla\sqrt{\rho}|^2$ is the parabolic defect measure.

To define a renormalized kinetic solution to \eqref{mainspde}, we need a concept of kinetic measure.
\begin{definition}\label{def-2.2}
	Let $(\Omega,\mathcal{F},\mathbb{P})$ be a probability space with a filtration $(\mathcal{F}_{t})_{t\in[0,\infty)}.$ A kinetic measure is a map $q$ from $\Omega$ to the space of nonnegative, locally finite measures on $\mathbb{T}^{d}\times(0,\infty)\times[0,T]$ that satisfies the property that the process
	\begin{align*}
	(\omega,t)\in \Omega\times[0,T]\to
	\int^t_0{\color{black}\int_0^{\infty}}\int_{\mathbb{T}^d}\psi(x,\xi)\mathrm{d}q(x,\xi,r)
	\end{align*}
	is $\mathcal{F}_t-$predictable, for every $\psi\in C^{\infty}_c(\mathbb{T}^d\times (0,\infty))$.
\end{definition}


We will prove the well-posedness of \eqref{mainspde} for initial data with finite entropy.
Define
\begin{align}\label{kk-44}
\text{{\rm{Ent}}}(\mathbb{T}^{d})=\Big\{\rho\in L^{1}(\mathbb{T}^{d}): \rho\geq 0\ {\rm{and}}\ \int_{\mathbb{T}^{d}}\rho\log(\rho)<\infty\Big\}.
\end{align}

\begin{definition}(Renormalized kinetic solution)\label{def-2.4}
	Let $\hat{\rho}\in  \text{{\rm{Ent}}}\left(\mathbb{T}^{d}\right)$ and let $V$ satisfy Assumption (A1). A renormalized kinetic solution of (\ref{mainspde}) with initial datum $\rho(\cdot,0)=\hat{\rho}$ is a nonnegative, almost surely continuous $L^1(\mathbb{T}^d)$-valued $\mathcal{F}_t$-predictable function $\rho\in L^1\left(\Omega\times[0,T];L^1(\mathbb{T}^d)\right)$ that satisfies the following properties.
	\begin{enumerate}
		\item Conservation of mass: almost surely for every $t\in[0,T]$,
		\begin{equation}\label{eq-2.4}
		\|\rho(\cdot,t)\|_{L^{1}\left(\mathbb{T}^{d}\right)}=\left\|\hat{\rho}\right\|_{L^{1}\left(\mathbb{T}^{d}\right)}.
		\end{equation}
		\item Regularity of $\sqrt{\rho}$: there exists a constant $c\in(0,\infty)$ depending on $T,\|\hat{\rho}\|_{L^1(\mathbb{T}^d)}, V$ and $d$ such that
		\begin{equation}\label{eq-2.5}
		\mathbb{E}\int_{0}^{T}\int_{\mathbb{T}^{d}}\left|\nabla \sqrt\rho\right|^{2}\le c(T, d,\|\hat{\rho}\|_{L^1(\mathbb{T}^d)},\|V\|_{L^{p*}([0,T];L^p(\mathbb{T}^d;\mathbb{R}^d))}).
		\end{equation}
		Furthermore, there exists a nonnegative kinetic measure $q$ satisfying the following properties.
		\item Regularity: almost surely
		\begin{align}\label{eq-2.66}
		4\delta_{0}(\xi-\rho)\xi|\nabla \sqrt{\rho}|^{2}\le q\quad {\rm{on}}\ \mathbb{T}^d\times(0,\infty)\times[0,T].
		\end{align}
		\item Vanishing at infinity: we have
		\begin{align}\label{eq-2.7}
		\liminf_{M\to \infty}\mathbb{E}\Big[q(\mathbb{T}^d\times [0,T]\times [M,M+1])\Big]=0.
		\end{align}
		\item The equation: for every $\varphi\in\mathrm{C}_{c}^{\infty}\left(\mathbb{T}^{d}\times(0,\infty)\right)$, almost surely for {\blue every} $t\in[0,T]$,
		\begin{align}\label{eq-2.6}
		&{\color{black}\int_0^{\infty}}\int_{\mathbb{T}^d}\chi(x,\xi,t)\varphi(x,\xi)={\color{black}\int_0^{\infty}}\int_{\mathbb{T}^d}\bar{\chi}(\hat{\rho})\varphi(x,\xi)-\int_0^t\int_{\mathbb{T}^d}\nabla\rho\cdot(\nabla\varphi)(x,\rho)\notag\\
		&-\frac{1}{8}\int_0^t\int_{\mathbb{T}^d}F_1(x)\rho^{-1}\nabla\rho\cdot(\nabla\varphi)(x,\rho)-\frac{1}{4}\int_0^t\int_{\mathbb{T}^d}F_2(x)\cdot(\nabla\varphi)(x,\rho)\notag\\
		&-\int_0^t{\color{black}\int_0^{\infty}}\int_{\mathbb{T}^d}\partial_{\xi}\varphi(x,\xi)dq+\frac{1}{4}\int_0^t\int_{\mathbb{T}^d}\left(\nabla\rho\cdot F_2(x)+2F_3(x)\rho\right)(\partial_{\xi}\varphi)(x,\rho)\notag\\
		&-\int_0^t\int_{\mathbb{T}^d}\varphi(x,\rho)\nabla\cdot(\rho V(r)\ast\rho)-\int_0^t\int_{\mathbb{T}^d}\varphi(x,\rho)\nabla\cdot(\sqrt\rho \mathrm{d}W^F(r)),
		\end{align}
		where $\bar{\chi}(\hat{\rho})(x,\xi):={\color{black}\mathbf{1}_{\{0<\xi<\hat{\rho}(x)\}}}$.
	\end{enumerate}	
\end{definition}
\begin{remark}
	The estimate (\ref{eq-2.5}) implies that for every $K\in \mathbb{N}$,
	\begin{align}\label{kk-48}
	[(\rho\wedge K)\vee (1/K)]\in L^2(\Omega;L^2([0,T];H^1(\mathbb{T}^d))).
	\end{align}
As a result, the term $-\frac{1}{8}\int_0^t\int_{\mathbb{T}^d}F_1(x)\rho^{-1}\nabla\rho\cdot(\nabla\varphi)(x,\rho)$ on the righthand side of \eqref{eq-2.6} is well-defined. In addition, the integrability of the kernel term $\int_0^t\int_{\mathbb{T}^d}\varphi(x,\rho)\nabla\cdot(\rho V(r)\ast\rho)$ is guaranteed by Lemma \ref{lem-3.4} below.
\end{remark}

\begin{remark}\label{rmk-2.5}
It is important to note that under Assumption (A1) alone, we can obtain a probabilistically weak renormalized kinetic solution to (\ref{mainspde}) in the sense of Definition \ref{def-2.4} with the equation (\ref{eq-2.6}) holds for almost every $t\in [0,T]$. Under further Assumption (A2), the solution can be shown to be almost surely continuous on $L^1(\mathbb{T}^d)$ and the equation (\ref{eq-2.6}) holds for every $t\in [0,T]$. Thus, it is a renormalized kinetic solution to (\ref{mainspde}) exactly in the sense of Definition \ref{def-2.4}. Moreover, the solution is proven to be strong in the sense of probability.
%
\end{remark}

Beyond the established vanishing property at infinity  (\ref{eq-2.7}), the kinetic measure exhibits a decay at zero as well. In fact, this property can be derived by similar method as in the proof of \cite[Proposition 4.5]{FG21}, thus we omit the proof.
\begin{lemma}
	Suppose that Assumption (A1) is in force. Let $\hat{\rho}\in\text{{\rm{Ent}}}\left(\mathbb{T}^{d}\right)$. Let $\rho$ be a renormalized kinetic solution of \eqref{eq-1.1} in the sense of Definition \ref{def-2.4} with initial data $\rho(\cdot, 0)=\hat{\rho}$. Then it follows that, almost surely,
	\begin{align}\label{eq-2.8}
		\lim_{\beta\rightarrow0}\left[\beta^{-1}q\left(\mathbb{T}^{d}\times[0,T]\times[\beta/2,\beta]\right)\right]=0.
	\end{align}
\end{lemma}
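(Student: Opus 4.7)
The plan is to follow the blueprint of Theorem 4.6 in \cite{FG21}, modified to accommodate the new non-local drift. The key device is to test the kinetic formulation \eqref{eq-2.6} against an $x$-independent cutoff $\varphi_\beta\in C_c^\infty((0,\infty))$ whose derivative is nonnegative and dominates $\mathbf{1}_{[\beta/2,\beta]}$. A convenient choice is $\varphi_\beta(\xi):=\zeta_M(\xi)\int_0^\xi\psi(\eta/\beta)\,\mathrm{d}\eta$, for a fixed $\psi\in C_c^\infty((1/4,2))$ with $\psi\equiv 1$ on $[1/2,1]$, multiplied by a smooth truncation $\zeta_M$ equal to one on $[\beta/8,M]$ and vanishing outside $[\beta/16,M+1]$; the $M$-dependence is removed at the end using the vanishing property \eqref{eq-2.7}. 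Since $\varphi_\beta$ does not depend on $x$, the three terms in \eqref{eq-2.6} containing $(\nabla\varphi)(x,\rho)$ drop out, and the defect term produces the desired upper bound on $q\bigl(\mathbb{T}^{d}\times[s,T]\times[\beta/2,\beta]\bigr)$ in terms of the data term, the It\^o corrections involving $F_2,F_3$, the non-local kernel term $-\int_s^T\!\int_{\mathbb{T}^d}\varphi_\beta(\rho)\nabla\cdot(\rho V\ast\rho)$, and the stochastic integral.

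Dividing by $\beta$, each of the ``classical'' contributions is shown to vanish as $\beta\to 0$ by exactly the same arguments as in \cite{FG21}. The data term is bounded by $\beta^{-1}\int_{\mathbb{T}^d}(\hat\rho\wedge 2\beta)\mathbf{1}_{\{\hat\rho\ge\beta/4\}}\,\mathrm{d}x$ and tends to zero by dominated convergence since $\hat\rho\in L^1(\mathbb{T}^d)$. The It\^o corrections are handled through the pointwise localization $\rho\,\psi(\rho/\beta)\lesssim\beta\mathbf{1}_{\{\beta/4\le\rho\le 2\beta\}}$, the identity $\nabla\rho=2\sqrt\rho\,\nabla\sqrt\rho$, the continuity of $F_2,F_3$, and the energy estimate \eqref{eq-2.5}, again via dominated convergence. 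The stochastic integral, a mean-zero martingale, is controlled in expectation by Burkholder--Davis--Gundy together with the conservation of mass \eqref{eq-2.4} and the boundedness of $F_1$.

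The main obstacle, and the only genuinely new ingredient relative to \cite{FG21}, is the non-local term. Integrating by parts in $x$ rewrites it as $\int_s^T\!\int_{\mathbb{T}^d}\psi(\rho/\beta)\,\nabla\rho\cdot(\rho V\ast\rho)$; combining $\rho\mathbf{1}_{\{\rho\le 2\beta\}}\le 2\beta$, $\nabla\rho=2\sqrt\rho\,\nabla\sqrt\rho$, the Cauchy--Schwarz inequality, and convolution Young's inequality produces a bound of the form
$$\beta^{-1}\Bigl|\int_s^T\!\!\int_{\mathbb{T}^d}\varphi_\beta(\rho)\nabla\cdot(\rho V\ast\rho)\Bigr|\le C\sqrt{\beta}\,\|\hat\rho\|_{L^1(\mathbb{T}^d)}\!\int_s^T\!\|V(r)\|_{L^2(\mathbb{T}^d)}\Bigl(\int_{\{\beta/4\le\rho\le 2\beta\}}\!|\nabla\sqrt\rho|^2\Bigr)^{1/2}\!\mathrm{d}r,$$
which vanishes as $\beta\to 0$ by \eqref{eq-2.5}, dominated convergence, and the embedding $L^{p^*}([0,T];L^p)\hookrightarrow L^2([0,T];L^2)$ provided by Assumption (A1) (using $p>d\ge 2$ and $p^*\ge 2$). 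Assembling the estimates, letting $M\to\infty$ to dispose of the truncation, and passing to the limit $\beta\to 0$ along the standard argument of \cite[Theorem 4.6]{FG21} delivers the almost sure statement \eqref{eq-2.8}.
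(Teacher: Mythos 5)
Your overall scheme---testing \eqref{eq-2.6} with an $x$-independent cutoff in $\xi$, removing the truncation at level $M$ via \eqref{eq-2.7}, and isolating the non-local term as the only new ingredient---is precisely the adaptation of Theorem 4.6 in \cite{FG21} that the paper intends, and your kernel estimate is correct: on the support of $\partial_\xi\varphi_\beta$ one has $\rho\lesssim\beta$, so $\rho\nabla\rho=2\rho^{3/2}\nabla\sqrt{\rho}$ gains a factor $\beta^{3/2}$, and Cauchy--Schwarz, the convolution Young inequality $\|V(r)\ast\rho\|_{L^2(\mathbb{T}^d)}\le\|V(r)\|_{L^2(\mathbb{T}^d)}\|\hat\rho\|_{L^1(\mathbb{T}^d)}$ (using \eqref{eq-2.4}), the almost sure finiteness of $\int_s^T\int_{\mathbb{T}^d}|\nabla\sqrt{\rho}|^2$ from \eqref{eq-2.5}, and $V\in L^2([0,T];L^2(\mathbb{T}^d))$ from Assumption (A1) give a bound of order $\sqrt{\beta}$. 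However, your treatment of the data term has a genuine gap. With your test function the initial contribution to \eqref{eq-2.6} is $\int_{\mathbb{T}^d}\Phi_\beta(\hat\rho)$ with $\Phi_\beta(r)=\int_0^r\varphi_\beta(\xi)\,\mathrm{d}\xi$, which is of exact order $\beta\|\hat\rho\|_{L^1(\mathbb{T}^d)}$, and even the bound you wrote, $\beta^{-1}\int_{\mathbb{T}^d}(\hat\rho\wedge2\beta)\mathbf{1}_{\{\hat\rho\ge\beta/4\}}$, converges to $2|\{\hat\rho>0\}|$, not to $0$ (take $\hat\rho\equiv1$), so it cannot vanish by dominated convergence. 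Since $\partial_\xi\varphi_\beta$ must dominate $\mathbf{1}_{[\beta/2,\beta]}$ (with its negative part pushed up to heights handled by \eqref{eq-2.7}), the data term is unavoidably of size $\beta\|\hat\rho\|_{L^1(\mathbb{T}^d)}$; the argument only closes when it is paired with the terminal term $\int_{\mathbb{R}}\int_{\mathbb{T}^d}\chi(x,\xi,T)\varphi_\beta(\xi)$, whose leading parts cancel by conservation of mass \eqref{eq-2.4}: one has $\Phi_\beta(r)=c_\psi\beta r+O(\beta^2)$ uniformly on $[0,M]$, so $\beta^{-1}\bigl[\int_{\mathbb{T}^d}\Phi_\beta(\hat\rho)-\int_{\mathbb{T}^d}\Phi_\beta(\rho(T))\bigr]$ is $O(\beta)$ plus $M$-tail terms. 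This cancellation is the heart of the cited argument and is missing from your sketch.

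Two smaller points. The It\^o correction $\tfrac14\int_s^T\int_{\mathbb{T}^d}(\nabla\rho\cdot F_2)(\partial_\xi\varphi_\beta)(\rho)$ is not controlled by the localization you list: that route only gives $\beta^{-1/2}$ times $\bigl(\int_{\{\beta/4\le\rho\le2\beta\}}|\nabla\sqrt{\rho}|^2\bigr)^{1/2}$, which vanishes without a rate, so the product is inconclusive; instead, integrate by parts in $x$ and use the standing assumption $\nabla\cdot F_2=0$, which makes this term vanish identically (the $F_3$ term and the $F_1$, $\nabla_x\varphi$ terms are handled as you say). Finally, controlling the martingale only in expectation via Burkholder--Davis--Gundy yields the limit in probability, or almost surely along subsequences; the full almost sure limit asserted in \eqref{eq-2.8} requires the additional upgrade (dyadic scales plus interpolation between them, as in \cite{FG21}), so you should either carry that out or note that almost sure subsequential convergence is all that is used downstream in the uniqueness proof.
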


\section{Estimates for the kernel term}\label{sec-3}
In this section, we make  a series of estimates for the nonlocal kernel term under Assumptions (A1) and (A2) which are definitely new compared to \cite{FG21}. These estimates will  play a pivotal role in the substantiation of our main results.

Note that under Assumptions (A1) and (A2), it follows that $\frac{2p}{p-d}=\frac{2}{1-\frac{d}{p}}\le p^{*}$ and  $\frac{2q}{2q-d}=\frac{1}{1-\frac{d}{2q}}\le q^{*}$.
A simple application of H\"{o}lder's inequality implies the following results.
\begin{lemma}
	Let $f\in L^{\infty}\left([0,T];L^1(\mathbb{T}^d)\right)$ be nonnegative with $\sqrt{f}\in L^2\left([0,T];H^1(\mathbb{T}^d)\right)$.
	\begin{enumerate}
		\item If $V$ satisfies Assumption (A1), then there exists a constant $C$ depending on $T$ such that
		\begin{align}\label{eq-3.1}
		\int_{0}^{T}\|\nabla\sqrt{f(t)}\|^{\frac{d}{p}+1}_{L^2(\mathbb{T}^d)}\|V(t)\|_{L^p(\mathbb{T}^d)}\le C(T)\|\nabla\sqrt{f}\|^{\frac{d+p}{p}}_{L^2([0,T];L^2(\mathbb{T}^d))}\|V\|_{L^{p*}([0,T];L^p(\mathbb{T}^d;\mathbb{R}^d))}.
		\end{align}
		\item If $V$ satisfies Assumption (A2), then there exists a constant $C$ depending on $T$ such that
		\begin{align}\label{eq-3.2}
		\int_{0}^{T}\|\nabla\sqrt{f(t)}\|^{\frac{d}{q}}_{L^2(\mathbb{T}^d)}\|\nabla\cdot V(t)\|_{L^q(\mathbb{T}^d)} \le C(T) \|\nabla\sqrt{f}\|^{\frac{d}{q}}_{L^2([0,T];L^2(\mathbb{T}^d))}\|\nabla\cdot V\|_{L^{q*}([0,T];L^q(\mathbb{T}^d))}.
		\end{align}
	\end{enumerate}
\end{lemma}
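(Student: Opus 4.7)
The plan is to prove both inequalities by a straightforward application of H\"older's inequality in the time variable, with exponents dictated by the scaling built into Assumptions (A1) and (A2). The only verification needed is that the Ladyzhenskaya--Prodi--Serrin-type conditions precisely allow the H\"older exponents to close up.

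For \eqref{eq-3.1}, set $a(t)=\|\nabla\sqrt{f(t)}\|_{L^2(\mathbb{T}^d)}$ and $b(t)=\|V(t)\|_{L^p(\mathbb{T}^d)}$ and choose the exponent $\tilde{\alpha}=\frac{2p}{p+d}$, so that $(1+\tfrac{d}{p})\tilde{\alpha}=2$ and $\tfrac{1}{\tilde{\alpha}}=\tfrac{1}{2}+\tfrac{d}{2p}$. Assumption (A1), rewritten as $\tfrac{d}{2p}+\tfrac{1}{p^{*}}\leq \tfrac{1}{2}$, yields
\[
\frac{1}{\tilde{\alpha}}+\frac{1}{p^{*}}\leq 1,
\]
so one can pick $\gamma\in[1,\infty]$ with $\tfrac{1}{\tilde\alpha}+\tfrac{1}{p^*}+\tfrac{1}{\gamma}=1$ and apply the three-factor H\"older inequality
\[
\int_0^T a(t)^{1+d/p}\,b(t)\,dt \;\leq\; \Bigl(\int_0^T a(t)^{2}\,dt\Bigr)^{1/\tilde{\alpha}}\Bigl(\int_0^T b(t)^{p^{*}}\,dt\Bigr)^{1/p^{*}}T^{1/\gamma},
\]
which is exactly \eqref{eq-3.1} with $C(T)=T^{1/\gamma}$. (When $\tfrac{1}{\tilde\alpha}+\tfrac{1}{p^*}=1$ one simply omits the third factor.)

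For \eqref{eq-3.2} the argument is identical with $a(t)$ as above, $b(t)=\|\nabla\cdot V(t)\|_{L^q(\mathbb{T}^d)}$, and the choice $\tilde{\alpha}=\tfrac{2q}{d}$, so that $(\tfrac{d}{q})\tilde{\alpha}=2$ and $\tfrac{1}{\tilde{\alpha}}=\tfrac{d}{2q}$. Assumption (A2) is precisely $\tfrac{1}{\tilde\alpha}+\tfrac{1}{q^{*}}\leq 1$, permitting the same three-exponent H\"older step and producing the factor $C(T)=T^{1/\gamma}$ from a trivial time integration.

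There is no substantive obstacle; the only subtle point, which should be emphasized at the start of the proof, is that the LPS-type constraints in (A1) and (A2) are exactly what is required to make the H\"older exponents compatible, i.e.\ to balance the scaling exponent $1+\tfrac{d}{p}$ (respectively $\tfrac{d}{q}$) against the time integrability $p^{*}$ (respectively $q^{*}$) of the kernel.
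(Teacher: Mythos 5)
Your proposal is correct and coincides with the paper's argument: the paper likewise derives \eqref{eq-3.1} and \eqref{eq-3.2} by H\"older's inequality in time, using exactly the observation that (A1) and (A2) give $\frac{2p}{p-d}\le p^{*}$ and $\frac{2q}{2q-d}\le q^{*}$, which is the same exponent bookkeeping as your choices $\tilde{\alpha}=\frac{2p}{p+d}$ and $\tilde{\alpha}=\frac{2q}{d}$. Your single three-factor H\"older step (absorbing the spare integrability into a factor $T^{1/\gamma}$) is just a compressed form of the paper's two-step H\"older application, so there is nothing to add.
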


We emphasize that the estimations presented in (\ref{eq-3.1}) and (\ref{eq-3.2}) will be employed when applying a stochastic Gronwall's lemma to derive the pathwise uniqueness (see Theorem \ref{the-uniq}). In addition, the inequalities (\ref{eq-3.1}) and (\ref{eq-3.2}) imply the index relation imposed on $V$ and $\nabla\cdot V$, respectively.

{\black
In the following, we introduce the standard convolutional kernel on $\mathbb{R}^d$ and $\mathbb{T}^d$. For any $d\ge1$, we take
\begin{equation*}
    \varphi(x) =
\begin{cases}
C \exp\!\left(-\dfrac{1}{1/2 - |x|^2}\right), & |x| < 1/\sqrt{2}, \\[8pt]
0, & |x| \geq 1/\sqrt{2},
\end{cases}
\quad x \in \mathbb{R}^d,
\end{equation*}
where the constant $C > 0$ is chosen to satisfy
\begin{equation*}
    \int_{\mathbb{R}^d} \varphi(x)\,dx = 1.
\end{equation*}
For $\delta > 0$, we then define
\begin{equation*}
    \kappa_\delta(x) = \frac{1}{\delta^d}\,\varphi\!\left(\frac{x}{\delta}\right),
    \qquad x \in \mathbb{R}^d.
\end{equation*}

For the standard convolution kernel on the torus $\mathbb{T}^d$, we fix the coordinate representation of $\mathbb{T}^d$ as $[-1/2,1/2]^d$. It is noted that the function $\varphi$ can be extended periodically, and thus can be regarded as a smooth kernel on $\mathbb{T}^d$.
}

The following lemma shows a product rule for the weak derivative.
\begin{lemma}\label{lem-2}
Let $f\in L^{\infty}([0,T];L^1(\mathbb{T}^d))$ be nonnegative with $\nabla\sqrt{f}\in L^2([0,T];L^2(\mathbb{T}^d;\mathbb{R}^d))$, then the chain rule for weak derivatives $\nabla f=2\sqrt{f}\nabla\sqrt{f}$ holds for almost every $(x,t)\in \mathbb{T}^d\times [0,T]$. Moreover, we have $\nabla f\in L^2([0,T];L^1(\mathbb{T}^d;\mathbb{R}^d))$.
\end{lemma}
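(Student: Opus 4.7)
The plan is to fix a time $t$ at which $\sqrt{f(\cdot,t)}\in H^1(\mathbb{T}^d)$ (a null set in $t$ is negligible since $\nabla\sqrt{f}\in L^2([0,T];L^2)$), and to verify the product rule $\nabla f=2\sqrt{f}\,\nabla\sqrt{f}$ distributionally by approximating $\sqrt{f}$ with smooth functions. The pointwise-in-time claim follows from a Fubini argument once the distributional identity is proved.

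For the approximation step I would set $u:=\sqrt{f(\cdot,t)}$ and choose a standard mollifier $\eta_\varepsilon$ on $\mathbb{T}^d$, putting $u_\varepsilon:=u\ast\eta_\varepsilon\in C^\infty(\mathbb{T}^d)$. Then $u_\varepsilon\to u$ in $L^2$ and $\nabla u_\varepsilon\to\nabla u$ in $L^2$ as $\varepsilon\to 0$. By the classical product rule, $\nabla(u_\varepsilon^2)=2u_\varepsilon\nabla u_\varepsilon$, so for any test function $\varphi\in C^\infty(\mathbb{T}^d;\mathbb{R}^d)$,
\begin{equation*}
\int_{\mathbb{T}^d} u_\varepsilon^2\,\nabla\!\cdot\!\varphi=-\int_{\mathbb{T}^d} 2u_\varepsilon\nabla u_\varepsilon\cdot\varphi.
\end{equation*}
To pass to the limit, I would use Cauchy–Schwarz: $\|u_\varepsilon^2-u^2\|_{L^1}\le\|u_\varepsilon-u\|_{L^2}\|u_\varepsilon+u\|_{L^2}\to 0$, and similarly $\|2u_\varepsilon\nabla u_\varepsilon-2u\nabla u\|_{L^1}\le 2\|u_\varepsilon\|_{L^2}\|\nabla u_\varepsilon-\nabla u\|_{L^2}+2\|u_\varepsilon-u\|_{L^2}\|\nabla u\|_{L^2}\to 0$. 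This gives $\int f\,\nabla\!\cdot\!\varphi=-\int 2\sqrt{f}\,\nabla\sqrt{f}\cdot\varphi$, i.e., $\nabla f=2\sqrt{f}\,\nabla\sqrt{f}$ as distributions on $\mathbb{T}^d$ for a.e.\ $t$.

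For the integrability statement, I would use Cauchy–Schwarz once more: for a.e.\ $t$,
\begin{equation*}
\|\nabla f(\cdot,t)\|_{L^1(\mathbb{T}^d)}\le 2\|\sqrt{f(\cdot,t)}\|_{L^2(\mathbb{T}^d)}\|\nabla\sqrt{f(\cdot,t)}\|_{L^2(\mathbb{T}^d)}=2\|f(\cdot,t)\|_{L^1(\mathbb{T}^d)}^{1/2}\|\nabla\sqrt{f(\cdot,t)}\|_{L^2(\mathbb{T}^d)}.
\end{equation*}
Squaring and integrating in $t$,
\begin{equation*}
\int_0^T\|\nabla f(\cdot,t)\|_{L^1(\mathbb{T}^d)}^2\,\mathrm{d}t\le 4\|f\|_{L^\infty([0,T];L^1(\mathbb{T}^d))}\|\nabla\sqrt{f}\|_{L^2([0,T];L^2(\mathbb{T}^d))}^2<\infty,
\end{equation*}
which yields $\nabla f\in L^2([0,T];L^1(\mathbb{T}^d;\mathbb{R}^d))$.

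I do not anticipate a real obstacle here; the only mildly delicate point is checking that the product $\sqrt{f}\,\nabla\sqrt{f}$ makes sense as an $L^1$-function in space at a.e.\ time, and this is exactly what Cauchy–Schwarz together with $\sqrt{f}\in L^2(\mathbb{T}^d)$ (equivalent to $f\in L^1(\mathbb{T}^d)$) provides. Measurability in $(x,t)$ of the resulting expression follows from the fact that both $\sqrt{f}$ and $\nabla\sqrt{f}$ are jointly measurable representatives.
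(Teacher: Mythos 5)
Your argument is correct, and it reaches the identity by a genuinely different route than the paper. You approximate the \emph{function}: you mollify $u=\sqrt{f(\cdot,t)}$ in space, use the classical product rule $\nabla(u_\varepsilon^2)=2u_\varepsilon\nabla u_\varepsilon$, and pass to the limit in the weak formulation via Cauchy--Schwarz, which only requires $u_\varepsilon\to u$ and $\nabla u_\varepsilon\to\nabla u$ in $L^2(\mathbb{T}^d)$ for a.e.\ fixed $t$. The paper instead approximates the \emph{nonlinearity}: it truncates $F(\zeta)=\zeta^2$ to $F_M(\zeta)=(\zeta\wedge M)^2$, mollifies to get $F_{M,\delta}$ with bounded derivative, invokes the chain rule for weak derivatives (Evans, Ch.~5, Ex.~17) to write $\nabla F_{M,\delta}(\sqrt f)=F'_{M,\delta}(\sqrt f)\nabla\sqrt f$, and then removes the two regularizations by dominated convergence. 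Your version is somewhat more elementary, trading the chain-rule lemma and the double limit $\delta\to0$, $M\to\infty$ for standard $H^1$-mollification on the torus; the paper's version avoids mollifying $\sqrt f$ itself and is the pattern it reuses later for compositions with more general nonlinearities (e.g.\ $\Theta_{\sigma,\delta}$, $\Phi_{\sigma,m}$), so it fits the rest of the paper's toolkit. Your derivation of the integrability, $\|\nabla f(\cdot,t)\|_{L^1}\le 2\|f(\cdot,t)\|_{L^1}^{1/2}\|\nabla\sqrt{f(\cdot,t)}\|_{L^2}$ followed by squaring and integrating in time, is the same H\"older/Cauchy--Schwarz step as in the paper and gives the stated $L^2([0,T];L^1(\mathbb{T}^d;\mathbb{R}^d))$ bound; the only housekeeping points (reduction to a.e.\ $t$ with $\sqrt{f(\cdot,t)}\in H^1(\mathbb{T}^d)$, joint measurability of $\sqrt f\,\nabla\sqrt f$) are ones you flag and they are indeed routine.
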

\begin{proof}
Let $F(\zeta) = \zeta^2$ for $\zeta \geq 0$, and let $\{\kappa_{\delta}\}_{\delta > 0}$ be a sequence of standard convolution kernels on $\mathbb{R}$. For any $M>0$ and $\delta>0$, define $F_{M}(\zeta):=(\zeta\wedge M)^2$ and $F_{M,\delta}(\zeta):= \kappa_{\delta}\ast F_M (\zeta)$. Then for every $\varphi\in C^{\infty}(\mathbb{T}^d)$, applying the chain rule (see Evans \cite[Chapter 5, Exercise 17]{Eva10}) to $\langle\nabla F_{M,\delta}(\sqrt{f}),\varphi\rangle$, and then passing to the limits $\delta\rightarrow0$, $M\rightarrow\infty$, this completes the poof.
\end{proof}

\begin{lemma}\label{lem-1}
	Let $g\in L^{\infty}\left([0,T];L^1(\mathbb{T}^d)\right)$ and $f\in L^{\infty}\left([0,T];L^1(\mathbb{T}^d)\right)$ be nonnegative functions. In addition, assume that $f$ satisfies $\sqrt{f}\in L^2\left([0,T];H^1(\mathbb{T}^d)\right)$.
	\begin{enumerate}
		\item If $V$ satisfies Assumption (A1), then
		\begin{align}\label{eq-3.3}
		\int^T_0\|\nabla f\cdot V(t)\ast g\|_{L^1(\mathbb{T}^d)}\lesssim\int^T_0\|\nabla\sqrt{f}\|^{\frac{d}{p}+1}_{L^2(\mathbb{T}^d)}\|f\|^{\frac12-\frac{d}{2p}}_{L^1(\mathbb{T}^d)}\|V(t)\|_{L^p(\mathbb{T}^d)}\|g\|_{L^1(\mathbb{T}^d)}.	
		\end{align}
		\item If $V$ satisfies Assumption (A2), then
		\begin{align}\label{eq-3.4}
		\int^T_0\|f\nabla\cdot V(t)\ast g\|_{L^1(\mathbb{T}^d)}\lesssim\int^T_0\|\nabla\sqrt{f}\|^{\frac{d}{q}}_{L^2(\mathbb{T}^d)}\|f\|^{1-\frac{d}{2q}}_{L^1(\mathbb{T}^d)}\|\nabla\cdot V(t)\|_{L^q(\mathbb{T}^d)}\|g\|_{L^1(\mathbb{T}^d)}.
		\end{align}
	\end{enumerate}
\end{lemma}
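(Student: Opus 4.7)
\textbf{Proof proposal for Lemma \ref{lem-1}.}
The plan is to trade the derivative on $f$ for a derivative on $\sqrt{f}$ via the chain rule from Lemma \ref{lem-2}, peel off the convolution with $V$ (resp.\ $\nabla\cdot V$) using Young's convolution inequality, and then interpolate the residual Lebesgue norm of $\sqrt{f}$ through Gagliardo-Nirenberg (Lemma \ref{lem-3.2}), with the two slots anchored at $\|\sqrt{f}\|_{L^2(\mathbb{T}^d)}=\|f\|_{L^1(\mathbb{T}^d)}^{1/2}$ and at $\|\nabla\sqrt{f}\|_{L^2(\mathbb{T}^d)}$. The whole argument is pointwise in $t$; the final time integration is trivial.

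For (i), Lemma \ref{lem-2} gives $|\nabla f|=2\sqrt{f}\,|\nabla\sqrt{f}|$ almost everywhere, so pointwise Hölder followed by Cauchy-Schwarz in space yields
\[
\|\nabla f\cdot V(t)\ast g\|_{L^1(\mathbb{T}^d)}\leq 2\|\nabla\sqrt{f}\|_{L^2(\mathbb{T}^d)}\,\|\sqrt{f}\,|V(t)\ast g|\|_{L^2(\mathbb{T}^d)}.
\]
A second Hölder with conjugate exponents $2p/(p-2)$ and $p$, combined with Young's convolution inequality, gives
\[
\|\sqrt{f}\,|V(t)\ast g|\|_{L^2(\mathbb{T}^d)}\leq \|\sqrt{f}\|_{L^{2p/(p-2)}(\mathbb{T}^d)}\,\|V(t)\|_{L^p(\mathbb{T}^d)}\,\|g\|_{L^1(\mathbb{T}^d)}.
\]
Lemma \ref{lem-3.2} applied to $u=\sqrt{f}$ with $j=0$, $m=1$, $r=q=2$ produces the admissible interpolation parameter $\alpha=d/p$, which lies in $[0,1]$ because Assumption (A1) requires $p>d$. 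The resulting bound
\[
\|\sqrt{f}\|_{L^{2p/(p-2)}(\mathbb{T}^d)}\lesssim \|\nabla\sqrt{f}\|_{L^2(\mathbb{T}^d)}^{d/p}\,\|\sqrt{f}\|_{L^2(\mathbb{T}^d)}^{1-d/p}=\|\nabla\sqrt{f}\|_{L^2(\mathbb{T}^d)}^{d/p}\,\|f\|_{L^1(\mathbb{T}^d)}^{1/2-d/(2p)}
\]
combines with the previous display and is integrated in $t$ to recover (\ref{eq-3.3}).

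For (ii), Hölder with conjugate exponents $q/(q-1)$ and $q$ together with Young's convolution inequality give
\[
\|f\,(\nabla\cdot V(t))\ast g\|_{L^1(\mathbb{T}^d)}\leq \|f\|_{L^{q/(q-1)}(\mathbb{T}^d)}\,\|\nabla\cdot V(t)\|_{L^q(\mathbb{T}^d)}\,\|g\|_{L^1(\mathbb{T}^d)}.
\]
Writing $\|f\|_{L^{q/(q-1)}(\mathbb{T}^d)}=\|\sqrt{f}\|_{L^{2q/(q-1)}(\mathbb{T}^d)}^{2}$ and applying Lemma \ref{lem-3.2} once more (same $j$, $m$, $r$, $q$ as above) yields the parameter $\alpha=d/(2q)\in[0,1]$, admissible by Assumption (A2) which requires $q>d/2$. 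Squaring the Gagliardo-Nirenberg bound gives
\[
\|f\|_{L^{q/(q-1)}(\mathbb{T}^d)}\lesssim \|\nabla\sqrt{f}\|_{L^2(\mathbb{T}^d)}^{d/q}\,\|f\|_{L^1(\mathbb{T}^d)}^{1-d/(2q)},
\]
and integration in time produces (\ref{eq-3.4}).

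The principal difficulty is matching three constraints simultaneously: Young's convolution inequality forces the exponent dual to $V$ (resp.\ $\nabla\cdot V$) to be exactly $p$ (resp.\ $q$); the residual factor of $\sqrt{f}$ must therefore sit in $L^{2p/(p-2)}$ (resp.\ $L^{2q/(q-1)}$); and the Gagliardo-Nirenberg interpolation between $L^2$ and the gradient $L^2$-norm reaches these exponents only when the interpolation parameter stays in $[0,1]$. These compatibility conditions translate precisely into the LPS-type thresholds $p>d$ and $q>d/2$, which is exactly where Assumptions (A1) and (A2) enter. The transfer of Lemma \ref{lem-3.2} from $\mathbb{R}^d$ to $\mathbb{T}^d$ is routine, as the $\|\sqrt{f}\|_{L^2(\mathbb{T}^d)}$ anchor keeps the interpolation subcritical on the bounded domain.
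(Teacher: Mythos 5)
Your proposal is correct and follows essentially the same route as the paper's proof: the chain rule $\nabla f=2\sqrt f\,\nabla\sqrt f$ from Lemma \ref{lem-2}, H\"older together with the convolution Young inequality to peel off $\|V(t)\|_{L^p(\mathbb{T}^d)}\|g\|_{L^1(\mathbb{T}^d)}$ (resp.\ $\|\nabla\cdot V(t)\|_{L^q(\mathbb{T}^d)}\|g\|_{L^1(\mathbb{T}^d)}$), and then Gagliardo--Nirenberg (Lemma \ref{lem-3.2}) applied to $\sqrt f$ anchored at $\|\sqrt f\|_{L^2(\mathbb{T}^d)}=\|f\|_{L^1(\mathbb{T}^d)}^{1/2}$, producing the same exponents $d/p$ and $d/q$ via the same exponents $L^{2p/(p-2)}$ and $L^{2q/(q-1)}$. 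The only cosmetic difference is that you perform Cauchy--Schwarz followed by a second H\"older where the paper uses a single three-factor H\"older with $\tfrac12+\tfrac1{p'}+\tfrac1p=1$.
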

	\begin{proof}
Based on Lemma \ref{lem-2}, by using H\"{o}lder's and convolutional Young's inequalities, we get
		\begin{align}\notag
		\int^T_0\|\nabla f\cdot V(t)\ast g\|_{L^1(\mathbb{T}^d)}
=&\int^T_0\|2\sqrt{f}\nabla \sqrt{f}\cdot V(t)\ast g\|_{L^1(\mathbb{T}^d)}\\
\label{eq-3.5}
\le &2\int^T_0\|\nabla\sqrt{f}\|_{L^2(\mathbb{T}^d)}
\|g\|_{L^1(\mathbb{T}^d)}\|V(t)\|_{L^{p}(\mathbb{T}^d)}\|\sqrt{f}\|_{L^{p'}(\mathbb{T}^d)},
		\end{align}
		where $\frac2{p}+\frac2{p'}=1$. Applying Gagliardo-Nirenberg interpolation inequality (\cite{BM18}) to $\|\sqrt{f}\|_{L^{p'}(\mathbb{T}^d)}$, there exists a constant $c\in(0,\infty)$ depending on $d$ such that
		\begin{align}\label{eq-3.6}
		\|\sqrt f\|_{L^{p'}(\mathbb{T}^{d})}\le c(d)\|\nabla\sqrt f\|^{\frac{d(p'-2)}{2p'}}_{L^{2}(\mathbb{T}^{d})}\|\sqrt f\|^{1-\frac{d(p'-2)}{2p'}}_{L^{2}(\mathbb{T}^{d})}
=c(d)\|f\|^{\frac{1}{2}-\frac{d}{2p}}_{L^{1}(\mathbb{T}^{d})}\|\nabla\sqrt f\|^{\frac{d}{p}}_{L^{2}(\mathbb{T}^{d})}.
		\end{align}
Substituting \eqref{eq-3.6} into \eqref{eq-3.5}, we get \eqref{eq-3.3}.
		
Using  H\"{o}lder's and convolutional Young's inequalities again to see that
		\begin{align}\label{eq-3.7}
		\int^T_0\|f\nabla\cdot V(t)\ast g\|_{L^1(\mathbb{T}^d)}\le\int^T_0\|f\|_{L^{q'}(\mathbb{T}^d)}\|\nabla\cdot V(t)\|_{L^q(\mathbb{T}^d)}\|g\|_{L^1(\mathbb{T}^d)},
		\end{align}
		where $\frac{1}{q'}+\frac{1}{q}=1$. Applying Gagliardo-Nirenberg interpolation inequality to $\| f\|_{L^{q'}(\mathbb{T}^d)}$, there exists a constant $c\in(0,\infty)$ depending on $d$ such that
		\begin{align}\label{eq-3.8}
		\| f\|_{L^{q'}(\mathbb{T}^d)}\le c(d)\|\nabla\sqrt f\|^{\frac{d(q'-1)}{q'}}_{L^{2}(\mathbb{T}^{d})}\|\sqrt f\|^{2-\frac{d(q'-1)}{q'}}_{L^{2}(\mathbb{T}^{d})}
=c(d)\|f\|^{1-\frac{d}{2q}}_{L^{1}(\mathbb{T}^{d})}\|\nabla\sqrt f\|^{\frac{d}{q}}_{L^{2}(\mathbb{T}^{d})}.
		\end{align}
		By substituting \eqref{eq-3.8} into \eqref{eq-3.7}, we get the desired result \eqref{eq-3.4}.
	\end{proof}

Since the interaction kernel $V$ is irregular, a product rule for the weak derivatives is needed as well.
\begin{lemma} \label{lem-3}
The following two properties hold.
\begin{enumerate}
		\item Let $f\in L^{\infty}([0,T];L^1(\mathbb{T}^d))$ and $g\in L^{\infty}\left([0,T];L^1(\mathbb{T}^d)\right)$ be nonnegative functions with $\sqrt{f}\in L^2\left([0,T];H^1(\mathbb{T}^d)\right)$ and $\sqrt{g}\in L^2\left([0,T];H^1(\mathbb{T}^d)\right)$. Assume that $V$ satisfies Assumption (A1), then the product rule for weak derivatives $\nabla\cdot(fV\ast g)=\nabla f\cdot V\ast g+fV\ast(\nabla g)$ holds for almost every $(x,t)\in \mathbb{T}^d\times [0,T]$, where $V\ast(\nabla g):=\int_{\mathbb{T}^d}V(y)\cdot\nabla_x g(x-y)\mathrm{d}y$.
  \item Let $f\in L^{\infty}([0,T];L^1(\mathbb{T}^d))$ and $g\in L^{\infty}\left([0,T];L^1(\mathbb{T}^d)\right)$ be nonnegative functions with $\nabla\sqrt{f}\in L^2([0,T];L^2(\mathbb{T}^d;\mathbb{R}^d))$. Assume that $V$ satisfies Assumptions (A1) and (A2), then the product rule for weak derivatives $\nabla\cdot(fV\ast g)=\nabla f\cdot V\ast g+f (\nabla \cdot V)\ast g$ holds for almost every $(x,t)\in \mathbb{T}^d\times [0,T]$, where $(\nabla \cdot V)\ast g:=\int_{\mathbb{T}^d}(\nabla\cdot V(x-y)) g(y)\mathrm{d}y$.
\end{enumerate}
\end{lemma}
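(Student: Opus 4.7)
The plan is to establish both identities by spatial mollification, reducing to the smooth case (where the classical product rule and integration by parts on $\mathbb{T}^{d}$ apply), and then passing to the limit using the integrability estimates of Lemmas \ref{lem-1}--\ref{lem-2} together with Young's inequality.

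Let $\kappa_{\delta}$ be a standard mollifier on $\mathbb{T}^{d}$ and set $f_{\delta}:=\kappa_{\delta}\ast f$, $g_{\delta}:=\kappa_{\delta}\ast g$, $V_{\delta}:=\kappa_{\delta}\ast V$; these are smooth in the spatial variable, so the classical product rule yields the pointwise identity
\begin{equation*}
\nabla\cdot\bigl(f_{\delta}(V_{\delta}\ast g_{\delta})\bigr)=\nabla f_{\delta}\cdot(V_{\delta}\ast g_{\delta})+f_{\delta}\,\nabla\cdot(V_{\delta}\ast g_{\delta}),
\end{equation*}
together with the two equivalent representations $\nabla\cdot(V_{\delta}\ast g_{\delta})=V_{\delta}\ast\nabla g_{\delta}=(\nabla\cdot V_{\delta})\ast g_{\delta}$. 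After multiplying by an arbitrary test function $\varphi\in C^{\infty}(\mathbb{T}^{d}\times[0,T])$ and integrating by parts in $x$, this identity becomes a distributional one.

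To prove (i) I use the representation $V_{\delta}\ast\nabla g_{\delta}$: the standard mollifier convergences give $f_{\delta}\to f$ and $g_{\delta}\to g$ in $L^{1}(\mathbb{T}^{d})$ for a.e.\ $t$ and $V_{\delta}\to V$ in $L^{p^{*}}([0,T];L^{p}(\mathbb{T}^{d}))$, while Lemma \ref{lem-2} applied to both $f$ and $g$ produces $\nabla f_{\delta}\to\nabla f$ and $\nabla g_{\delta}\to\nabla g$ in $L^{2}([0,T];L^{1}(\mathbb{T}^{d}))$; Young's inequality then transfers these convergences through the convolutions, and estimate \eqref{eq-3.3} supplies the uniform $L^{1}$-domination on $\mathbb{T}^{d}\times[0,T]$ required to pass to the limit. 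Part (ii) is analogous but uses the representation $(\nabla\cdot V_{\delta})\ast g_{\delta}$, with $\nabla\cdot V_{\delta}=\kappa_{\delta}\ast\nabla\cdot V\to\nabla\cdot V$ in $L^{q^{*}}([0,T];L^{q}(\mathbb{T}^{d}))$ by assumption (A2) and estimate \eqref{eq-3.4} furnishing the corresponding $L^{1}$-domination; the first right-hand term $\nabla f_{\delta}\cdot(V_{\delta}\ast g_{\delta})$ is handled exactly as in (i). The main obstacle is of a bookkeeping nature rather than a conceptual one: the Young and H\"older exponents in time and space must balance so that each term converges in $L^{1}(\mathbb{T}^{d}\times[0,T])$; this works precisely because, via the Gagliardo--Nirenberg interpolation for $f$ (and for $g$ in (i)), the LPS conditions in (A1)--(A2) are exactly tuned so that \eqref{eq-3.3} and \eqref{eq-3.4} close and the dominated convergence theorem applies.
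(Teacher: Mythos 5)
Your overall strategy (mollify, use the smooth product rule, pass to the limit with the Section \ref{sec-3} estimates) is the same as the paper's, but the implementation differs: the paper mollifies \emph{only} the kernel, setting $V_{\gamma}=V\ast\eta_{\gamma}$, notes that $V_{\gamma}\ast g$ is smooth in $x$, and then invokes the Leibniz rule for weak derivatives directly for the (unmollified) $f$, so that all error terms are of the form $\langle\nabla f\cdot(V_{\gamma}-V)\ast g,\varphi\rangle$, $\langle f(V_{\gamma}-V)\ast\nabla g,\varphi\rangle$, etc., and are killed by $\|V_{\gamma}(t)-V(t)\|_{L^{p}}\to0$ (resp. $\|\nabla\cdot V_{\gamma}(t)-\nabla\cdot V(t)\|_{L^{q}}\to0$) together with \eqref{eq-3.3}--\eqref{eq-3.4}. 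You mollify $f$ and $g$ as well, which is admissible but buys nothing and introduces the extra bookkeeping you mention.

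Two points in your limit passage need repair. First, the claim that $\nabla f_{\delta}\to\nabla f$ in $L^{2}([0,T];L^{1}(\mathbb{T}^{d}))$ and that ``Young's inequality transfers these convergences through the convolutions'' does not handle the term $\nabla f_{\delta}\cdot(V_{\delta}\ast g_{\delta})$: here $\nabla f_{\delta}$ sits \emph{outside} the convolution and is multiplied by a function that is only bounded in $L^{p}(\mathbb{T}^{d})$ (not $L^{\infty}$), so $L^{1}$-convergence of $\nabla f_{\delta}$ is insufficient. The fix is exactly the Gagliardo--Nirenberg input you allude to at the end: $\nabla f=2\sqrt{f}\,\nabla\sqrt{f}\in L^{p/(p-1)}(\mathbb{T}^{d})$ for a.e. $t$ by Lemma \ref{lem-3.2} and H\"older, hence $\nabla f_{\delta}=\kappa_{\delta}\ast\nabla f\to\nabla f$ in $L^{p/(p-1)}(\mathbb{T}^{d})$, which pairs with the $L^{p}$ bound on $V_{\delta}\ast g_{\delta}$; you should state the convergence in this norm, not in $L^{1}$. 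Second, for the uniform domination do not appeal to \eqref{eq-3.3} applied to $f_{\delta}$ itself: $\|\nabla\sqrt{f_{\delta}}\|_{L^{2}}$ is not controlled by $\|\nabla\sqrt{f}\|_{L^{2}}$ in general, since $\sqrt{f_{\delta}}\neq\kappa_{\delta}\ast\sqrt{f}$. Instead bound $\|\nabla f_{\delta}\|_{L^{p/(p-1)}}\le\|\nabla f\|_{L^{p/(p-1)}}$, $\|V_{\delta}\|_{L^{p}}\le\|V\|_{L^{p}}$, $\|g_{\delta}\|_{L^{1}}\le\|g\|_{L^{1}}$ by Young for convolutions, so that the dominating function in time is expressed through the unmollified quantities appearing in \eqref{eq-3.1} and \eqref{eq-3.3}--\eqref{eq-3.4}. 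With these corrections your argument closes; the paper's choice of mollifying only $V$ avoids both issues at the outset.
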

The proof of Lemma \ref{lem-3} follows from considering a regularization of the kernel, and then passing to the limit, thus we omit the proof.

%
%

\begin{lemma}\label{lem-3.4}
	Let $f,g\in L^{\infty}\left([0,T];L^1(\mathbb{T}^d)\right)$ be nonnegative functions with $\sqrt{f}\in L^2\left([0,T];H^1(\mathbb{T}^d)\right)$ and $\sqrt{g}\in L^2\left([0,T];H^1(\mathbb{T}^d)\right)$. Suppose that  Assumption (A1) holds, then there exists a constant $C<\infty$ such that
	\begin{align*}
	\int_{0}^{T}\int_{\mathbb{T}^{d}}|\nabla\cdot\left(fV(t)\ast g\right)|\leq C.
	\end{align*}
\end{lemma}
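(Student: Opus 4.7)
The plan is to reduce the statement to the two estimates already proved in the section. By Lemma \ref{lem-3}(i), for almost every $t\in[0,T]$ we have the product rule
\begin{equation*}
\nabla\cdot\bigl(fV(t)\ast g\bigr)=\nabla f\cdot V(t)\ast g+fV(t)\ast(\nabla g),
\end{equation*}
so that the triangle inequality reduces the claim to bounding the $L^1$-in-$(t,x)$ norms of these two pieces separately.

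For the first piece I directly invoke (\ref{eq-3.3}) in Lemma \ref{lem-1}, which gives
\begin{equation*}
\int_{0}^{T}\|\nabla f\cdot V(t)\ast g\|_{L^{1}(\mathbb{T}^{d})}
\lesssim \|f\|_{L^{\infty}([0,T];L^{1})}^{\frac12-\frac{d}{2p}}\,\|g\|_{L^{\infty}([0,T];L^{1})}\int_{0}^{T}\|\nabla\sqrt{f}\|_{L^{2}}^{\frac{d}{p}+1}\|V(t)\|_{L^{p}}.
\end{equation*}
Then (\ref{eq-3.1}) together with Assumption (A1) (which encodes exactly the LPS relation $\tfrac{d}{p}+\tfrac{2}{p^{*}}\le1$ needed for the H\"{o}lder exponents to close) bounds the remaining time integral by $C(T)\|\nabla\sqrt{f}\|_{L^{2}([0,T];L^{2})}^{(d+p)/p}\|V\|_{L^{p^{*}}([0,T];L^{p})}<\infty$.

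For the second piece $fV(t)\ast(\nabla g)$ the strategy is essentially the same, but I exploit the symmetric structure of convolution. By Lemma \ref{lem-2}, $\nabla g=2\sqrt{g}\nabla\sqrt{g}$, and Cauchy--Schwarz yields $\|\nabla g(t)\|_{L^{1}}\le 2\|g(t)\|_{L^{1}}^{1/2}\|\nabla\sqrt{g}(t)\|_{L^{2}}$. Then H\"{o}lder and Young's convolution inequality (with exponents $p'$ and $p$, $\tfrac1p+\tfrac{1}{p'}=1$) give
\begin{equation*}
\|fV(t)\ast(\nabla g)\|_{L^{1}}\le\|f(t)\|_{L^{p'}}\|V(t)\|_{L^{p}}\|\nabla g(t)\|_{L^{1}}.
\end{equation*}
Applying Lemma \ref{lem-3.2} exactly as in (\ref{eq-3.8}) produces $\|f(t)\|_{L^{p'}}\lesssim\|f(t)\|_{L^{1}}^{1-\frac{d}{2p}}\|\nabla\sqrt{f}(t)\|_{L^{2}}^{d/p}$, so
\begin{equation*}
\int_{0}^{T}\|fV(t)\ast(\nabla g)\|_{L^{1}}\lesssim\|f\|_{L^{\infty}L^{1}}^{1-\frac{d}{2p}}\|g\|_{L^{\infty}L^{1}}^{1/2}\int_{0}^{T}\|\nabla\sqrt{f}\|_{L^{2}}^{d/p}\|V(t)\|_{L^{p}}\|\nabla\sqrt{g}\|_{L^{2}}.
\end{equation*}
I then close the time integral by H\"{o}lder with the three exponents $\tfrac{2p}{d}$, $p^{*}$, and $r$ defined by $\tfrac{d}{2p}+\tfrac{1}{p^{*}}+\tfrac{1}{r}=1$; under Assumption (A1) one has $r\in[2,\infty]$, so since $[0,T]$ has finite measure $\|\nabla\sqrt{g}\|_{L^{r}([0,T];L^{2})}\le T^{1/r-1/2}\|\nabla\sqrt{g}\|_{L^{2}([0,T];L^{2})}$, which is finite by assumption.

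I do not expect a genuine obstacle here; the only delicate point is verifying that the H\"{o}lder exponents for the second term are admissible, which is precisely where Assumption (A1) is used. The constant $C$ one obtains depends on $T$, $d$, $p$, $\|V\|_{L^{p^{*}}([0,T];L^{p})}$, $\|f\|_{L^{\infty}([0,T];L^{1})}$, $\|g\|_{L^{\infty}([0,T];L^{1})}$, $\|\nabla\sqrt{f}\|_{L^{2}([0,T];L^{2})}$ and $\|\nabla\sqrt{g}\|_{L^{2}([0,T];L^{2})}$, all finite by hypothesis.
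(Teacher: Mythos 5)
Your proposal is correct and follows essentially the same route as the paper: the same splitting via Lemma \ref{lem-3}(i), the bound (\ref{eq-3.3}) combined with (\ref{eq-3.1}) for the first piece, and H\"older, convolution Young and the interpolation (\ref{eq-3.8}) for the second piece, closed by H\"older in time under Assumption (A1), which is exactly the paper's chain (\ref{eq-3.10})--(\ref{eq-3.12}). One small slip: with $\frac{d}{2p}+\frac{1}{p^{*}}+\frac{1}{r}=1$, Assumption (A1) forces $r\in[1,2]$ rather than $[2,\infty]$; fortunately the embedding $L^{2}([0,T])\hookrightarrow L^{r}([0,T])$ you invoke is precisely the one valid for $r\le 2$, so the step stands as written.
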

\begin{proof}
Referring to (1) in Lemma \ref{lem-3}, we have
\begin{align*}
	\int_{0}^{T}\int_{\mathbb{T}^{d}}|\nabla\cdot\left(fV(t)\ast g\right)|\le\int_{0}^{T}\|\nabla f\cdot V(t)\ast g\|_{L^1(\mathbb{T}^d)}+\int_{0}^{T}\| f (V(t)\ast \nabla g)\|_{L^1(\mathbb{T}^d)}.
	\end{align*}
  \eqref{eq-3.3} and \eqref{eq-3.1} together yield
	\begin{align}\label{eq-3.10}
	\int_{0}^{T}\|\nabla f\cdot V(t)\ast g\|_{L^1(\mathbb{T}^d)}\lesssim&C(T)\|f\|^{\frac12-\frac{d}{2p}}_{L^{\infty}([0,T];L^1(\mathbb{T}^d))}\|g\|_{L^{\infty}([0,T];L^1(\mathbb{T}^d))}\notag\\
&\cdot\|\nabla\sqrt{f}\|^{\frac{d+p}{p}}_{L^2([0,T];L^2(\mathbb{T}^d))}\|V\|_{L^{p*}([0,T];L^p(\mathbb{T}^d;\mathbb{R}^d))}.
	\end{align}
For $\frac{1}{p'}+\frac{1}{p}=1$,
using Lemma \ref{lem-2} and H\"{o}lder's inequality to see that
	\begin{align}
	\int_{0}^{T}\| f (V(t)\ast \nabla g)\|_{L^1(\mathbb{T}^d)}\leq& 2\int_{0}^{T}\|f\|_{L^{p'}(\mathbb{T}^d)}\|V(t)\ast(\sqrt{g}\nabla\sqrt{g})\|_{L^p(\mathbb{T}^d)}\notag\\
\le&2\int_{0}^{T}\|f\|_{L^{p'}(\mathbb{T}^d)}\|V(t)\|_{L^p(\mathbb{T}^d)}\|\sqrt{g}\|_{L^2(\mathbb{T}^d)}\|\nabla\sqrt{g}\|_{L^2(\mathbb{T}^d)}\notag\\
 \lesssim&\|f\|^{1-\frac{d}{2p}}_{L^{\infty}\left([0,T];L^1(\mathbb{T}^d)\right)}\|g\|^{\frac12}_{L^{\infty}\left([0,T];L^1(\mathbb{T}^d)\right)}\notag\\
&\cdot\int_{0}^{T}\|\nabla\sqrt{f}\|^{\frac{d}{p}}_{L^2(\mathbb{T}^d)}\|V(t)\|_{L^p(\mathbb{T}^d)}\|\nabla\sqrt{g}\|_{L^2(\mathbb{T}^d)},
	\end{align}
where \eqref{eq-3.8} has been used for the last inequality.
%
Similar to \eqref{eq-3.1}, we can derive that
	\begin{align}\label{eq-3.12}
	&\int_{0}^{T}\|\nabla\sqrt{f}\|^{\frac{d}{p}}_{L^2(\mathbb{T}^d)}\|V(t)\|_{L^p(\mathbb{T}^d)}\|\nabla\sqrt{g}\|_{L^2(\mathbb{T}^d)}\notag\\
	\lesssim& \Big(\int_{0}^{T}\|\nabla\sqrt{f(t)}\|^{2}_{L^2(\mathbb{T}^d)} \Big)^{\frac{d}{2p}}\Big(\int_{0}^{T}\|\nabla\sqrt{g(t)}\|^{2}_{L^2(\mathbb{T}^d)} \Big)^{\frac{1}{2}}\|V\|_{L^{p^*}([0,T];L^p(\mathbb{T}^d;\mathbb{R}^d))}.
	\end{align}
	Combining (\ref{eq-3.10})-\eqref{eq-3.12}, we conclude the desired result.
\end{proof}

\section{Uniqueness of renormalized kinetic solutions}\label{sec-4}
In this section, we aim to show the pathwise uniqueness of renormalized kinetic solutions of \eqref{eq-2.2}.  As stated in the Introduction part, unlike \cite{FG21}, we have to employ the entropy estimate \eqref{eq-2.5} and a stochastic Gronwall's lemma to handle the kernel term. The latter is formulated as follows, whose proof  can be found in \cite[Lemma 5.3]{GZ}.
\begin{lemma}\label{lem-4.1}
	Let $T>0$. Assume that $X,Y,Z,R:[0,T)\times\Omega\to\mathbb{R}$ are real-valued, nonnegative stochastic processes. Let $\tau<T$, $\mathbb{P}-a.s.$ be a stopping time such that
	\begin{equation}\label{eq-4.1}
	\mathbb{E}\int_0^{\tau}(RX+Z)\mathrm{d}s<\infty.
	\end{equation}
	Assume that for some constant $M<\infty$,
	\begin{equation}\label{eq-4.2}
	\int_0^{\tau}R\mathrm{d}s<M,\ \ \mathbb{P}-a.s.
	\end{equation}
	Suppose that for all stopping times $0\leq\tau_a\leq\tau_b\leq\tau$,
	\begin{equation}\label{eq-4.3}
	\mathbb{E}\Big(\sup_{t\in[\tau_a,\tau_b]}X+\int_{\tau_a}^{\tau_b}Y\mathrm{d}s\Big)\leq C_{0}\mathbb{E}\Big(X(\tau_a)+\int_{\tau_a}^{\tau_b}(RX+Z)\mathrm{d}s\Big),
	\end{equation}
	where $C_{0}$ is a constant independent of $\tau_a$ and $\tau_b$. Then there exists a constant $C$ depending on $C_0, T$ and $M$ such that
	\begin{equation*}
	\mathbb{E}\Big(\sup_{t\in[0,\tau]}X+\int_0^{\tau}Y\mathrm{d}s\Big)\leq C(C_0,T,M)\mathbb{E}\Big(X(0)+\int_0^{\tau}Z\mathrm{d}s\Big).
	\end{equation*}
\end{lemma}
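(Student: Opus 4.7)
The plan is to prove this stochastic Gronwall estimate by a random partition argument that breaks $[0,\tau]$ into finitely many sub-intervals on which the $R$-mass is so small that the dangerous $RX$ term can be absorbed into the left-hand side, and then to iterate the localized estimate.

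First I would fix $\delta := \tfrac{1}{2C_0}$ and construct a sequence of stopping times by setting $\sigma_0 = 0$ and, inductively,
\begin{equation*}
\sigma_{k+1} := \inf\Bigl\{t \ge \sigma_k \,:\, \int_{\sigma_k}^{t} R(s)\,\mathrm{d}s \ge \delta\Bigr\} \wedge \tau.
\end{equation*}
Each $\sigma_k$ is indeed an $\mathcal{F}_t$-stopping time (since $t\mapsto\int_{\sigma_k}^{t} R\,\mathrm{d}s$ is adapted and continuous in $t$), and $\sigma_k \le \sigma_{k+1} \le \tau$. The crucial observation is that $\int_0^{\tau} R\,\mathrm{d}s < M$ $\mathbb{P}$-a.s.\ together with the definition of $\sigma_{k+1}$ forces $\sigma_K = \tau$ for the deterministic index $K := \lceil M/\delta\rceil + 1 = \lceil 2 C_0 M\rceil + 1$; indeed on $\{\sigma_k < \tau\}$ the increment $\int_{\sigma_k}^{\sigma_{k+1}} R\,\mathrm{d}s$ equals exactly $\delta$, so at most $K$ such increments fit under $M$. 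Thus $[0,\tau]$ is covered by at most $K$ (deterministically many) sub-intervals.

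Next, I would apply hypothesis \eqref{eq-4.3} with $\tau_a = \sigma_k$ and $\tau_b = \sigma_{k+1}$. The key estimate is the pathwise bound
\begin{equation*}
\int_{\sigma_k}^{\sigma_{k+1}} R(s) X(s)\,\mathrm{d}s \le \Bigl(\sup_{t \in [\sigma_k,\sigma_{k+1}]} X(t)\Bigr)\int_{\sigma_k}^{\sigma_{k+1}} R(s)\,\mathrm{d}s \le \delta \sup_{t\in[\sigma_k,\sigma_{k+1}]} X(t).
\end{equation*}
Substituting this into \eqref{eq-4.3} and using $C_0\delta = \tfrac{1}{2}$ gives
\begin{equation*}
\mathbb{E}\Bigl(\sup_{t\in[\sigma_k,\sigma_{k+1}]} X + \int_{\sigma_k}^{\sigma_{k+1}} Y\,\mathrm{d}s\Bigr) \le C_0\,\mathbb{E}\Bigl(X(\sigma_k) + \int_{\sigma_k}^{\sigma_{k+1}} Z\,\mathrm{d}s\Bigr) + \tfrac{1}{2}\mathbb{E}\sup_{t\in[\sigma_k,\sigma_{k+1}]} X.
\end{equation*}
The finiteness assumption \eqref{eq-4.1} together with \eqref{eq-4.3} applied on $[0,\tau]$ ensures $\mathbb{E}\sup_{[\sigma_k,\sigma_{k+1}]} X < \infty$, so this term can be absorbed on the left to yield
\begin{equation*}
\mathbb{E}\sup_{t\in[\sigma_k,\sigma_{k+1}]} X + 2\,\mathbb{E}\int_{\sigma_k}^{\sigma_{k+1}} Y\,\mathrm{d}s \le 2C_0\,\mathbb{E}X(\sigma_k) + 2C_0\,\mathbb{E}\int_{\sigma_k}^{\sigma_{k+1}} Z\,\mathrm{d}s.
\end{equation*}

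Finally I would iterate. Using $\mathbb{E}X(\sigma_{k+1}) \le \mathbb{E}\sup_{[\sigma_k,\sigma_{k+1}]} X$, a discrete Gronwall gives
\begin{equation*}
\mathbb{E}X(\sigma_k) \le (2C_0)^k \mathbb{E}X(0) + \sum_{j=0}^{k-1}(2C_0)^{k-j}\mathbb{E}\int_{\sigma_j}^{\sigma_{j+1}} Z\,\mathrm{d}s,
\end{equation*}
and summing the telescoping control $\mathbb{E}\sup_{[0,\tau]} X \le \sum_{k=0}^{K-1}\mathbb{E}\sup_{[\sigma_k,\sigma_{k+1}]} X$ and $\mathbb{E}\int_0^\tau Y\,\mathrm{d}s = \sum_{k=0}^{K-1}\mathbb{E}\int_{\sigma_k}^{\sigma_{k+1}} Y\,\mathrm{d}s$ together with $\int_0^\tau Z\,\mathrm{d}s = \sum_k \int_{\sigma_k}^{\sigma_{k+1}} Z\,\mathrm{d}s$ yields the conclusion with $C = C(C_0,T,M) \lesssim K(2C_0)^K$. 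The main obstacle I anticipate is the technical verification that $\mathbb{E}\sup_{[\sigma_k,\sigma_{k+1}]} X < \infty$ so that the absorption step is legitimate, which is precisely why the finiteness hypothesis \eqref{eq-4.1} is assumed; a standard further localization by $\tau \wedge \inf\{t : X(t)\ge n\}$ followed by $n\to\infty$ via monotone convergence handles this cleanly if needed.
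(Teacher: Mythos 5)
Your proof is correct, and it is essentially the standard argument behind the result the paper itself does not prove but only cites (\cite{GZ}, Lemma 5.3): partition $[0,\tau]$ by stopping times at which the accumulated $R$-mass reaches $\delta=1/(2C_0)$, absorb the $RX$-term and the resulting $\tfrac12\mathbb{E}\sup X$ into the left-hand side, and iterate over the deterministically bounded number of subintervals. One minor slip worth fixing: the increment $\int_{\sigma_k}^{\sigma_{k+1}}R\,\mathrm{d}s$ equals $\delta$ on $\{\sigma_{k+1}<\tau\}$ rather than on $\{\sigma_k<\tau\}$, but since $\sigma_K<\tau$ forces all earlier increments to equal $\delta$, your counting bound $K=\lceil 2C_0M\rceil+1$ and the rest of the argument stand.
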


In order to restrict the values of kinetic solutions away from infinity and zero, we introduce cutoff functions by the same way as \cite{FG21}.
For every $\beta\in(0,1)$, let $\varphi_{\beta}:\mathbb{R}\to[0,1]$ be the unique nondecreasing piecewise linear function that satisfies
\begin{equation}\label{kk-46}
\varphi_{\beta}(\xi)=1\  {\rm{if}}\  \xi\ge\beta, \ \varphi_{\beta}(\xi)=0\ {\rm{if}}\  \xi\le\frac{\beta}{2},\  {\rm{and}}\  \varphi'_{\beta}=\frac{2}{\beta}\mathbf{1}_{\{\frac{\beta}{2}<\xi<\beta\}}.
\end{equation}
For every $M\in\mathbb{N}$, let $\zeta_M:\mathbb{R}\to[0,1]$ be the unique nonincreasing piecewise linear function satisfying
\begin{equation}\label{kk-47}
\zeta_M(\xi)=0\ {\rm{if}}\ \xi\ge M+1,\ \zeta_M(\xi)=1\ {\rm{if}}\ \xi\le M,\ {\rm{and}}\ \zeta'_M=-\mathbf{1}_{\{M<\xi<M+1\}}.
\end{equation}
For every $\varepsilon,\delta\in(0,1)$, let $\kappa^{\varepsilon}_{d}:\mathbb{T}^d\to\left[0,\infty\right)$ and $\kappa^{\delta}_{1}:\mathbb{R}\to\left[0,\infty\right)$ be standard convolution kernels of scales $\varepsilon$ and $\delta$ on $\mathbb{T}^d$ and $\mathbb{R}$, respectively {\color{black}(defined in the same way as in the proof of Lemma 3.2).} Let $\kappa^{\varepsilon,\delta}$ be defined by
\begin{align}\label{kk-45}
\kappa^{\varepsilon,\delta}(x,y,\xi,\eta)=\kappa^{\varepsilon}_{d}(x-y)\kappa^{\delta}_{1}(\xi-\eta)\ \text{for\ every}\ (x,y,\xi,\eta)\in(\mathbb{T}^d)^2\times\mathbb{R}^2.
\end{align}

Now, we are ready to prove the uniqueness of renormalized kinetic solutions of \eqref{eq-2.2} by doubling variables method.
\begin{theorem}\label{the-uniq}
	Suppose that Assumptions (A1)-(A2) are in force. Let $\hat{\rho}^1,\hat{\rho}^2\in\text{{\rm{Ent}}}\left(\mathbb{T}^{d}\right)$. Let $\rho^1,\rho^2$ be renormalized kinetic solutions of (\ref{eq-1.1}) in the sense of Definition \ref{def-2.4} with the corresponding initial data $\rho^1(\cdot,0)=\hat{\rho}^1,\rho^2(\cdot,0)=\hat{\rho}^2$. If $\hat{\rho}^1(x)=\hat{\rho}^2(x)$ for almost every $x\in\mathbb{T}^d$, then $\mathbb{P}$-almost surely,
	\begin{equation}
	\sup_{t\in[0,T]}\|\rho^1(\cdot,t)-\rho^2(\cdot,t)\|_{L^1(\mathbb{T}^d)}=0.\notag
	\end{equation}
\end{theorem}
\begin{proof}
 Let $\chi^1$ and $\chi^2$ be kinetic functions of $\rho^{1}$ and $\rho^{2}$, respectively. Recall that for every $\varepsilon,\delta\in(0,1)$, $\kappa^{\varepsilon,\delta}$ is the convolution kernel given by (\ref{kk-45}). Then, for every $i\in\{1,2\}$, we define
\begin{align*}
  \chi^{\varepsilon,\delta}_{t,i}(y,\eta)=(\chi^i(\cdot,\cdot,t)\ast\kappa^{\varepsilon,\delta})
(y,\eta).
\end{align*}
According to Definition \ref{def-2.4} and the Kolmogorov's continuity criterion, for every $\varepsilon,\delta\in(0,1)$, there exists a subset of full probability such that, for every $i\in\{1,2\}$, $(y,\eta)\in\mathbb{T}^d\times(\frac{\delta}{2},\infty)$, and $t\in[0,T]$,
	\begin{align}\label{eq-4.4}
	\left.\chi^{\varepsilon,\delta}_{r,i}(y,\eta)\right|^{r=t}_{r=0}=&\nabla_{y}\cdot\Big(\int^t_0\int_{\mathbb{T}^d}\nabla\rho^i\kappa^{\varepsilon,\delta}(x,y,\rho^i,\eta)\Big)+\partial_{\eta}\Big(\int^t_0{\color{black}\int_0^{\infty}}\int_{\mathbb{T}^d}\kappa^{\varepsilon,\delta}(x,y,\xi,\eta)\mathrm{d}q^i\Big)\notag\\
	&+\nabla_{y}\cdot\Big(\frac18\int^t_0\int_{\mathbb{T}^d}(F_1(x)(\rho^i)^{-1}\nabla\rho^i+2F_2(x))\kappa^{\varepsilon,\delta}(x,y,\rho^i,\eta)\Big)\notag\\
	&-\partial_{\eta}\Big(\frac14\int^t_0\int_{\mathbb{T}^d}(2F_3(x)\rho^i+\nabla\rho^i\cdot F_2(x))\kappa^{\varepsilon,\delta}(x,y,\rho^i,\eta)\Big)\notag\\
	&-\int^t_0\int_{\mathbb{T}^d}\kappa^{\varepsilon,\delta}(x,y,\rho^i,\eta)\nabla\cdot(\rho^iV(r)\ast\rho^i)-\int^t_0\int_{\mathbb{T}^d}\kappa^{\varepsilon,\delta}(x,y,\rho^i,\eta)\nabla\cdot\big(\sqrt{\rho^i}\mathrm{d}W^F\big).
	\end{align}

In view of \eqref{eq-4.4} and  \cite[Lemma 4.3]{FG21}, by the approach similar to the derivation of (4.14) in \cite{FG21}, we have
 almost surely, for every $\varepsilon,\beta\in(0,1),\ M\in\mathbb{N}$, $\delta\in(0,\frac{\beta}{4})$ and $t\in[0,T]$,
	\begin{align}\label{eq-4.8}
	\left.{\color{black}\int_0^{\infty}}\int_{\mathbb{T}^d}\Big|\chi_{r,1}^{\varepsilon,\delta}(y,\eta)
	-\chi_{r,2}^{\varepsilon,\delta}(y,\eta)\Big|^{2}\varphi_{\beta}(\eta)\zeta_{M}(\eta)\right|^{r=t}_{r=0}
	=-2I_t^{\mathrm{err}}-2I_t^{\mathrm{meas}}
	+I_t^{\mathrm{mart}}+I_t^{\mathrm{cut}}+I_t^{\mathrm{ker}}.
	\end{align}
	 The terms $I_t^{\mathrm{err}}, I_t^{\mathrm{meas}}, I_t^{\mathrm{mart}}$ and $I_t^{\mathrm{cut}}$ correspond respectively to the terms $I_t^{\mathrm{err}}, I_t^{\mathrm{meas}}, I_t^{\mathrm{mart}}$ and $I_t^{\mathrm{cut}}$ in \cite[Theorem 4.6]{FG21}. However, the kernel term $I_t^{\mathrm{ker}}$ is the extra term that needs to be estimated.  For brevity, we directly refer to  \cite[Theorem 4.6]{FG21} to list the following results.
	\begin{equation}\label{eq-4.9}	\limsup_{\delta\to0}\left(\limsup_{\varepsilon\to0}\left|I_t^{\mathrm{err}}\right|\right)=0,
	\end{equation}
	\begin{equation}\label{eq-4.10}
		I_t^{\text{meas}}\geq 0,
	\end{equation}
	\begin{equation}\label{eq-4.11}
		\lim_{M\to\infty}\left(\lim_{\beta\to0}\left(\lim_{\delta\to0}\left(\lim_{\varepsilon\to0}I_t^{\text{mart}}\right)\right)\right)=0,
	\end{equation}
and
	\begin{equation}\label{eq-4.12}
		\lim_{M\to\infty}\left(\lim_{\beta\to0}\left(\lim_{\delta\to0}\left(\lim_{\varepsilon\to0}I_t^{\mathrm{cut}}\right)\right)\right)=0,
	\end{equation}
	hold almost surely for every $t\in[0,T]$. Let
	\begin{align*}
		\bar{\kappa}_{r,1}^{\varepsilon,\delta}(x,y,\eta)=\kappa^{\varepsilon,\delta}\left(x,y,\rho^1(x,r),\eta\right)\text { and }\bar{\kappa}_{r,2}^{\varepsilon,\delta}\left(x^{\prime},y,\eta\right)=\kappa^{\varepsilon,\delta}\left(x^{\prime},y,\rho^2\left(x^{\prime},r\right),\eta\right).
	\end{align*}

We mainly deal with kernel term $I_t^{\text{ker}}$ with the following form:
	\begin{align*}
	I_t^{\text{ker}}
	=&\int_{0}^{t}{\color{black}\int_0^{\infty}}\int_{\left(\mathbb{T}^{d}\right)^{2}}\bar{\kappa}_{r,1}^{\varepsilon,\delta}\nabla\cdot(\rho^1V(r)\ast\rho^1)\left(2\chi_{r,2}^{\varepsilon,\delta}-1\right)\varphi_{\beta}(\eta)\zeta_{M}(\eta)\notag\\
	&+\int_{0}^{t}{\color{black}\int_0^{\infty}}\int_{\left(\mathbb{T}^{d}\right)^{2}}\bar{\kappa}_{r,2}^{\varepsilon,\delta}\nabla\cdot(\rho^2V(r)\ast\rho^2)\left(2\chi_{r,1}^{\varepsilon,\delta}-1\right)\varphi_{\beta}(\eta)\zeta_{M}(\eta).
	\end{align*}
	By using Lemma \ref{lem-3.4}, we deduce that $\nabla\cdot(\rho^iV\ast\rho^i)$ is $L^1(\Omega\times[0,T]\times \mathbb{T}^d)$-integrable. It follows from the definition of $\kappa^{\varepsilon,\delta}$, the boundedness of the kinetic functions and the dominated convergence theorem that, after passing to a subsequence $\varepsilon\to0$, almost surely for every $t\in[0,T]$,
	\begin{align}\label{eq-4.13}
	\lim_{\varepsilon\to0}I_t^{\text{ker}}=&\int_{0}^{t}{\color{black}\int_0^{\infty}}\int_{\mathbb{T}^{d}}\bar{\kappa}_{r,1}^{\delta}\left(2\chi_{r,2}^{\delta}-1\right)\varphi_{\beta}(\eta)\zeta_{M}(\eta)\nabla\cdot(\rho^1V(r)\ast\rho^1) \notag\\
	&+\int_{0}^{t}{\color{black}\int_0^{\infty}}\int_{\mathbb{T}^{d}}\bar{\kappa}_{r,2}^{\delta}\left(2\chi_{r,1}^{\delta}-1\right)\varphi_{\beta}(\eta)\zeta_{M}(\eta)\nabla\cdot(\rho^2V(r)\ast\rho^2) ,
	\end{align}
	where $\chi_{r,i}^{\delta}(y,\eta)=\left(\chi_{r}^{i}(y,\cdot)\ast\kappa_{1}^{\delta}\right)(\eta)$ and $\bar{\kappa}_{r,i}^\delta(y,\eta)=\kappa_1^\delta\left(\rho^i(y, r)-\eta\right)$ for each $i \in\{1,2\}$.
	Let us focus on the first term on the righthand side of (\ref{eq-4.13}).
	In view of $|2\chi_{r,2}^{\delta}-1|\le1$, by the definitions of $\varphi_{\beta}$, $\zeta_{M}$ and $\bar{\kappa}_{r,1}^{\delta}$, and the boundedness of the kinetic function, we deduce that there exists a constant $c\in(0,\infty)$ depending on $\beta$ such that for every $t\in[0,T]$,
	\begin{align}
	&\mathbb{E}\left|\int_{0}^{t}{\color{black}\int_0^{\infty}}\int_{\mathbb{T}^{d}}\bar{\kappa}_{r,1}^{\delta}\left(2\chi_{r,2}^{\delta}-1\right)\left(\varphi_{\beta}(\eta)\zeta_{M}(\eta)-\varphi_{\beta}\left(\rho^{1}\right)\zeta_{M}\left(\rho^{1}\right)\right)\nabla\cdot(\rho^1V(r)\ast\rho^1) \right|\notag\\
	\le&\mathbb{E}\int_{0}^{T}{\color{black}\int_0^{\infty}}\int_{\mathbb{T}^{d}}\left|\bar{\kappa}_{r,1}^{\delta}\left(\varphi_{\beta}(\eta)\zeta_{M}(\eta)-\varphi_{\beta}\left(\rho^{1}\right)\zeta_{M}\left(\rho^{1}\right)\right)\nabla\cdot(\rho^1V(r)\ast\rho^1)\right| \notag\\
	\le&\mathbb{E}\int_{0}^{T}{\color{black}\int_0^{\infty}}\int_{\mathbb{T}^{d}}\bar{\kappa}_{r,1}^{\delta}c(\beta)\delta\mathbf{1}_{\left\{\beta/2-\delta<\rho^{1}<M+1+\delta\right\}}\left|\nabla\cdot(\rho^1V(r)\ast\rho^1)\right| \notag\\
	\le&c(\beta)\delta\mathbb{E}\int_{0}^{T}\int_{\mathbb{T}^{d}}\left|\nabla\cdot(\rho^1V(r)\ast\rho^1)\right|\notag.
	\end{align}
	Since for each $i \in\{1,2\}$, $\nabla\cdot(\rho^iV\ast\rho^i)$ is $L^1(\Omega\times[0,T]\times \mathbb{T}^d)$-integrable, passing to a subsequence $\delta\to0$, almost surely,
	\begin{equation}\label{eq-4.14}
	\lim_{\delta\to0}\left|\int_{0}^{t}{\color{black}\int_0^{\infty}}\int_{\mathbb{T}^{d}}\bar{\kappa}_{r,1}^{\delta}\left(2\chi_{r,2}^{\delta}-1\right)\left(\varphi_{\beta}(\eta)\zeta_{M}(\eta)-\varphi_{\beta}\left(\rho^{1}\right)\zeta_{M}\left(\rho^{1}\right)\right)\nabla\cdot(\rho^1V(r)\ast\rho^1) \right|=0.
	\end{equation}
	Similarly, for the second term on the righthand side of (\ref{eq-4.13}), it holds that
	\begin{equation}\label{kk-49}
	\lim_{\delta\to0}\left|\int_{0}^{t}{\color{black}\int_0^{\infty}}\int_{\mathbb{T}^{d}}\bar{\kappa}_{r,2}^{\delta}
	\left(2\chi_{r,1}^{\delta}-1\right)
	\left(\varphi_{\beta}(\eta)\zeta_{M}(\eta)-\varphi_{\beta}
	\left(\rho^{2}\right)\zeta_{M}\left(\rho^{2}\right)\right)\nabla\cdot(\rho^2V(r)\ast\rho^2) \right|=0.
	\end{equation}
	Moreover, referring to \cite[(4.22)]{FG21}
and in view of $\varphi_{\beta}(0)=0$, we deduce that pointwise
	\begin{equation}\label{eq-4.16}
	\lim_{\delta\to0}\left({\color{black}\int_0^{\infty}}\bar{\kappa}_{r,1}^{\delta}\left(2\chi_{r,2}^{\delta}-1\right)\mathrm{d}\eta\right)\varphi_{\beta}\left(\rho^{1}\right)=\left(\mathbf{1}_{\left\{\rho^{1}=\rho^{2}\right\}}+2\mathbf{1}_{\left\{0\le\rho^{1}<\rho^{2}\right\}}-1\right)\varphi_{\beta}\left(\rho^{1}\right).
	\end{equation}
	Combining \eqref{eq-4.13}-\eqref{eq-4.16}, passing to a subsequence $\delta\to0$, almost surely,
	\begin{align}\notag
	\lim_{\delta\to0}\left(\lim_{\varepsilon\to0}I_t^{\text{ker}}\right)
	=&\int_{0}^{t}\int_{\mathbb{T}^{d}}\left(\mathbf{1}_{\left\{\rho^{1}=\rho^{2}\right\}}
	+2\mathbf{1}_{\left\{\rho^{1}<\rho^{2}\right\}}-1\right)\varphi_{\beta}\left(\rho^{1}\right)
	\zeta_{M}\left(\rho^{1}\right)\nabla\cdot(\rho^1V(r)\ast\rho^1)\\
	\label{kk-50}
	&+\int_{0}^{t}\int_{\mathbb{T}^{d}}\left(\mathbf{1}_{\left\{\rho^{1}=\rho^{2}\right\}}
	+2\mathbf{1}_{\left\{\rho^{2}<\rho^{1}\right\}}-1\right)\varphi_{\beta}
	\left(\rho^{2}\right)\zeta_{M}\left(\rho^{2}\right)\nabla\cdot(\rho^2V(r)\ast\rho^2).
	\end{align}
	Now, we claim that along subsequences $\beta\to0$ and $M\to\infty$, almost surely, for every $i\in\{1,2\}$,
	\begin{align}\label{eq-4.22}
	\lim_{M\to\infty}\left(\lim_{\beta\to0}\varphi_{\beta}\left(\rho^{i}\right)\zeta_{M}\left(\rho^{i}\right)\nabla\cdot(\rho^iV\ast\rho^i)\right)=\nabla\cdot(\rho^iV\ast\rho^i)\ \text{ strongly in }L^{1}\left(\mathbb{T}^{d}\times[0,T]\right).
	\end{align}
	Indeed, from the definitions of $\varphi_{\beta}$ and $\zeta_{M}$, it gives that for every $i\in\{1,2\}$,
	\begin{align}\label{eq-4.17}
	&\mathbb{E}\int_{0}^{T}\int_{\mathbb{T}^{d}}\left|\varphi_{\beta}(\rho^{i})\zeta_{M}(\rho^{i})\nabla\cdot(\rho^iV(r)\ast\rho^i)-\nabla\cdot(\rho^iV(r)\ast\rho^i)\right|\notag\\
	\le&\mathbb{E}\int_{0}^{T}\int_{\mathbb{T}^{d}}\mathbf{1}_{\left\{0\le\rho^{i}<\beta\right\}}\left|\nabla\cdot(\rho^iV(r)\ast\rho^i)\right|+\mathbb{E}\int_{0}^{T}\int_{\mathbb{T}^{d}}\mathbf{1}_{\left\{\rho^{i}\ge M\right\}}\left|\nabla\cdot(\rho^iV(r)\ast\rho^i)\right|.
	\end{align}
	Clearly, the second term on the righthand side of \eqref{eq-4.17} converges to zero as $M\to\infty$. For the first term on the righthand side of \eqref{eq-4.17}, by Lemma \ref{lem-2} and (2) in Lemma \ref{lem-3}, it follows that for every $i\in\{1,2\}$,
	\begin{align}\label{eq-4.18}
	&\mathbb{E}\int_{0}^{T}\int_{\mathbb{T}^{d}}\mathbf{1}_{\left\{0\le\rho^{i}<\beta\right\}}\left|\nabla\cdot(\rho^iV(r)\ast\rho^i)\right|\notag\\
	\le&\mathbb{E}\int_{0}^{T}\int_{\mathbb{T}^{d}}\mathbf{1}_{\left\{0\le\rho^{i}<\beta\right\}}\left|\nabla\rho^i\cdot V(r)\ast\rho^i\right|+\mathbb{E}\int_{0}^{T}\int_{\mathbb{T}^{d}}\mathbf{1}_{\left\{0\le\rho^{i}<\beta\right\}}\left|\rho^i\nabla\cdot V(r)\ast\rho^i\right|\notag\\
	\le&2\beta^{\frac12}\mathbb{E}\int_{0}^{T}\int_{\mathbb{T}^{d}}\mathbf{1}_{\left\{0\le\rho^{i}<\beta\right\}}\left|\nabla\sqrt{\rho^i}\cdot V(r)\ast\rho^i\right|+\beta\mathbb{E}\int_{0}^{T}\int_{\mathbb{T}^{d}}\mathbf{1}_{\left\{0\le\rho^{i}<\beta\right\}}\left|\nabla\cdot V(r)\ast\rho^i\right|.
	\end{align}
	By using H\"{o}lder's inequality, convolutional Young's inequality and \eqref{eq-2.4}, we deduce that for every $i\in\{1,2\}$,
	\begin{align*}
	\mathbb{E}\int_{0}^{T}\|\nabla\sqrt{\rho^i}\cdot V(r)\ast\rho^i\|_{L^1(\mathbb{T}^d)}
	\le&\|\hat{\rho}^i\|_{L^1(\mathbb{T}^d)}\Big(\int_{0}^{T}\|V(r)\|_{L^2(\mathbb{T}^d)}^2\Big)^{\frac12}
	\Big(\mathbb{E}\int_{0}^{T}\|\nabla\sqrt{\rho^i}\|^2_{L^2(\mathbb{T}^d)}\Big)^{\frac12},\\
	\mathbb{E}\int_{0}^{T}\int_{\mathbb{T}^{d}}\mathbf{1}_{\left\{0\le\rho^{i}<\beta\right\}}\left|\nabla\cdot V(r)\ast\rho^i\right|\le& \mathbb{E}\int_{0}^{T}\|\nabla\cdot V(r)\ast\rho^i\|_{L^{1}(\mathbb{T}^d)}\le\|\hat{\rho}^i\|_{L^{1}(\mathbb{T}^d)}\int_{0}^{T}\|\nabla\cdot V(r)\|_{L^{1}(\mathbb{T}^d)},
	\end{align*}
	  where we have used the preservation of mass \eqref{eq-2.4} in the last step. Owing to (\ref{eq-4.18}) and Assumptions (A1) and (A2), we get the desired result (\ref{eq-4.22}).
	%
	
	Combining (\ref{kk-50}) and (\ref{eq-4.22}), we get
	\begin{align}\notag &\lim_{M\to\infty}\left(\lim_{\beta\to0}\left(\lim_{\delta\to0}\left(\lim_{\varepsilon\to0}I_t^{\text{ker}}\right)\right)\right)\\
	\notag 
=&\int_{0}^{t}\int_{\mathbb{T}^{d}}\left(\mathbf{1}_{\left\{\rho^{1}=\rho^{2}\right\}}+2\mathbf{1}_{\left\{\rho^{1}<\rho^{2}\right\}}-1\right)\left(\nabla\cdot(\rho^1V(r)\ast\rho^1)-\nabla\cdot(\rho^2V(r)\ast\rho^2)\right)\\
	\notag &+\int_{0}^{t}\int_{\mathbb{T}^{d}}\left(\mathbf{1}_{\left\{\rho^{1}=\rho^{2}\right\}}+2\mathbf{1}_{\left\{\rho^{1}<\rho^{2}\right\}}-1+\mathbf{1}_{\left\{\rho^{1}=\rho^{2}\right\}}+2\mathbf{1}_{\left\{\rho^{2}<\rho^{1}\right\}}-1\right)
	\nabla\cdot(\rho^2V(r)\ast\rho^2)\\
	\label{eq-4.23}
	=:& \tilde{J}_1+\tilde{J}_2.
	\end{align}
	Since $\mathbf{1}_{\left\{\rho^{2}<\rho^{1}\right\}}
	=1-\mathbf{1}_{\left\{\rho^{1}=\rho^{2}\right\}}-\mathbf{1}_{\left\{\rho^{1}<\rho^{2}\right\}}$, the $L^1(\Omega\times [0,T]\times \mathbb{T}^d)$-integrability of $\nabla\cdot(\rho^iV\ast\rho^i)$ for every $i\in\{1,2\}$ implies that
	\begin{equation}\label{eq-4.24}
	\tilde{J}_2=0.
	\end{equation}
	For the term $\tilde{J}_1$, by chain rule and the identity $\sgn(\rho^2-\rho^1)=\mathbf{1}_{\left\{\rho^{1}=\rho^{2}\right\}}+2\mathbf{1}_{\left\{\rho^{1}<\rho^{2}\right\}}-1$, we have
	\begin{align*}
	\tilde{J}_1=:\tilde{J}_{11}+\tilde{J}_{12},
	\end{align*}
	where
	\begin{align*}		\tilde{J}_{11}&=\int_{0}^{t}\int_{\mathbb{T}^d}\sgn(\rho^2-\rho^1)\nabla\cdot\left((\rho^1-\rho^2)V(r)\ast\rho^1\right),\\	\tilde{J}_{12}&=\int_{0}^{t}\int_{\mathbb{T}^d}\sgn(\rho^2-\rho^1)\nabla\cdot\left(\rho^2V(r)\ast(\rho^1-\rho^2)\right).
	\end{align*}
	Define $\sgn^{\delta}:=(\sgn\ast\kappa_{1}^{\delta})$ for every $\delta\in(0,1)$. By  integration by parts formula, we get that almost surely for every $t\in[0,T]$,
	\begin{align}
	\tilde{J}_{11} =&\lim_{\delta\to0}\int_{0}^{t}\int_{\mathbb{T}^d}\sgn^{\delta}(\rho^2-\rho^1)\nabla\cdot\left((\rho^1-\rho^2)V(r)\ast\rho^1\right)\notag\\
	=&-\lim_{\delta\to0}\int_{0}^{t}\int_{\mathbb{T}^d}(\sgn^{\delta})'(\rho^2-\rho^1)(\rho^1-\rho^2)\nabla(\rho^2-\rho^1)\cdot V(r)\ast\rho^1\notag.
	\end{align}
	It follows from  the uniform boundedness of $(\delta\kappa_{1}^{\delta})$ in $\delta\in(0,\beta/4)$ that there exists $c \in(0,\infty)$ independent of $\delta$ but depending on the convolution kernel such that for all $\delta\in(0,\beta/4)$,
	\begin{align*}
	\left|(\sgn^{\delta})'(\rho^2-\rho^1)(\rho^1-\rho^2)\right|=2|\kappa_{1}^{\delta}(\rho^2-\rho^1)(\rho^1-\rho^2)|
	\le c\mathbf{1}_{\left\{0<|\rho^{1}-\rho^{2}|<\delta\right\}}.
	\end{align*}
	Moreover, by \eqref{eq-3.3}, we get the $L^1(\Omega\times [0,T]\times \mathbb{T}^d)$-integrability of $\nabla(\rho^2-\rho^1)\cdot V\ast\rho^1$. As a result, we deduce that, almost surely for every $t\in[0,T]$,
	\begin{equation}\label{eq-4.27}
	\tilde{J}_{11}=0.
	\end{equation}
	Regarding to the term $\tilde{J}_{12}$,
	by (2) in Lemma \ref{lem-3}, \eqref{eq-3.3} and \eqref{eq-3.4}, we deduce that almost surely for every $t\in[0,T]$,
	\begin{align}\label{eq-4.28}
	\tilde{J}_{12}\le&\int_{0}^{t}\int_{\mathbb{T}^d}\left|\nabla\rho^2\cdot V(r)\ast(\rho^1-\rho^2)\right|+\int_{0}^{t}\int_{\mathbb{T}^d}\left|\rho^2(\nabla\cdot V(r))\ast(\rho^1-\rho^2)\right|\notag\\
	\le&C(\|\hat{\rho}^2\|_{L^1(\mathbb{T}^d)},p,d)\int_{0}^{t}\|V(r)\|_{L^{p}(\mathbb{T}^d)}\|\nabla\sqrt{\rho^2}\|^{1+\frac{d}{p}}_{L^2(\mathbb{T}^d)}\|\rho^1-\rho^2\|_{L^{1}(\mathbb{T}^d)}\notag\\
	&+C(\|\hat{\rho}^2\|_{L^1(\mathbb{T}^d)},q,d)\int_{0}^{t}\|\nabla\cdot V(r)\|_{L^{q}(\mathbb{T}^d)}\|\nabla\sqrt{\rho^2}\|^{\frac{d}{q}}_{L^{2}(\mathbb{T}^{d})}\|\rho^1-\rho^2\|_{L^{1}(\mathbb{T}^d)}.
	\end{align}
	Combining \eqref{eq-4.23}-\eqref{eq-4.28}, we conclude that almost surely for every $t\in[0,T]$,
	\begin{align}\label{eq-4.29}
	\lim_{M\to\infty}\lim_{\beta\to0}\lim_{\delta\to0}\lim_{\varepsilon\to0}I_t^{\text{ker}}\le&C(\|\hat{\rho}^2\|_{L^1(\mathbb{T}^d)},p,d)\int_{0}^{t}\|V(r)\|_{L^{p}(\mathbb{T}^d)}\|\nabla\sqrt{\rho^2}\|^{1+\frac{d}{p}}_{L^2(\mathbb{T}^d)}\|\rho^1-\rho^2\|_{L^{1}(\mathbb{T}^d)}\notag\\
	+&C(\|\hat{\rho}^2\|_{L^1(\mathbb{T}^d)},q,d)\int_{0}^{t}\|\nabla\cdot V(r)\|_{L^{q}(\mathbb{T}^d)}\|\nabla\sqrt{\rho^2}\|^{\frac{d}{q}}_{L^{2}(\mathbb{T}^{d})}\|\rho^1-\rho^2\|_{L^{1}(\mathbb{T}^d)}.
	\end{align}

	$\mathbf{Conclusion.}$  Based on the properties of the kinetic function, \eqref{eq-4.9}-\eqref{eq-4.12}, it follows that almost surely for every $t\in[0,T]$,
	\begin{align}\label{eq-4.30}
	{\color{black}\int_0^{\infty}}\int_{\mathbb{T}^{d}}\Big.\Big|\chi_{r}^{1}-\chi_{r}^{2}\Big|^{2}\Big|_{r=0}^{r=t}&=\lim_{M\to\infty}\Big(\lim_{\beta\to0}\Big(\lim_{\delta\to0}\Big(\lim_{\varepsilon\to0}{\color{black}\int_0^{\infty}}\int_{\mathbb{T}^{d}}\Big.\Big|\chi_{r,1}^{\varepsilon,\delta}-\chi_{r,2}^{\varepsilon,\delta}\Big|^{2}\varphi_{\beta}\zeta_{M}\Big|_{r=0}^{r=t}\Big)\Big)\Big)\notag\\
	&=\lim_{M\to\infty}\Big(\lim_{\beta\to0}\Big(\lim_{\delta\to0}\Big(\lim_{\varepsilon\to0}\Big(-2I_t^{\text{err}}-2I_t^{\text{meas}}+I_t^{\text{mart}}+I_t^{\text{cut}}+I_t^{\text{ker}}\Big)\Big)\Big)\Big)\notag\\
	&\le\lim_{M\to\infty}\lim_{\beta\to0}\lim_{\delta\to0}\lim_{\varepsilon\to0}I_t^{\text {ker}}.
	\end{align}
	For any $N>0$, define a stopping time $\tau_{N}:=\inf\Big\{t\in[0,T];\int_{0}^{t}\|\nabla\sqrt{\rho^2}\|^2_{L^{2}(\mathbb{T}^{d})}>N\Big\}$. By Chebyshev's inequality and \eqref{eq-2.5}, we have
	\begin{align}\label{eq-4.31}
	\mathbb{P}(\tau_{N}\le T)\rightarrow0,\quad {\rm{as}}\ N\rightarrow \infty.
	\end{align}
	Combining \eqref{eq-4.30} with the definition of the kinetic function, we derive that for every stopping time $0\leq\tau_a\leq\tau_b\leq\tau_N\wedge t$,
	\begin{align}\notag
	&\mathbb{E}\sup_{r\in[\tau_a,\tau_b]}\left\|\rho^{1}(r)-\rho^{2}(r)\right\|_{L^{1}(\mathbb{T}^{d})}-\mathbb{E}\left\|\rho^{1}(\tau_a)-\rho^{2}(\tau_a)\right\|_{L^{1}\left(\mathbb{T}^{d}\right)}\\
	\notag
	\le&C(\|\hat{\rho}^2\|_{L^1(\mathbb{T}^d)},p,d)\mathbb{E}\int_{\tau_a}^{\tau_b}
	\|V(r)\|_{L^{p}(\mathbb{T}^d)}\|\nabla\sqrt{\rho^2}\|^{1+\frac{d}{p}}_{L^2(\mathbb{T}^d)}
	\|\rho^1-\rho^2\|_{L^{1}(\mathbb{T}^d)}\\
	\label{kk-51}
	&+C(\|\hat{\rho}^2\|_{L^1(\mathbb{T}^d)},q,d)\mathbb{E}\int_{\tau_a}^{\tau_b}\|\nabla\cdot V(r)\|_{L^{q}(\mathbb{T}^d)}\|\nabla\sqrt{\rho^2}\|^{\frac{d}{q}}_{L^{2}(\mathbb{T}^{d})}\|\rho^1-\rho^2\|_{L^{1}(\mathbb{T}^d)}.
	\end{align}
	In the following, we aim to apply Lemma \ref{lem-4.1} to (\ref{kk-51}).
	Let $\tau=\tau_{N}\wedge T$, $X=\left\|\rho^{1}-\rho^{2}\right\|_{L^{1}\left(\mathbb{T}^{d}\right)}$, $Y, Z=0$ and
	\begin{align*}
	R:=C(\|\hat{\rho}^2\|_{L^1(\mathbb{T}^d)},q,d)\|\nabla\cdot V(r)\|_{L^{q}(\mathbb{T}^d)}\|\nabla\sqrt{\rho^2}\|^{\frac{d}{q}}_{L^{2}(\mathbb{T}^{d})}+C(\|\hat{\rho}^2\|_{L^1(\mathbb{T}^d)},p,d)\|V(r)\|_{L^{p}(\mathbb{T}^d)}\|\nabla\sqrt{\rho^2}\|^{1+\frac{d}{p}}_{L^2(\mathbb{T}^d)}.
	\end{align*}
Clearly, (\ref{kk-51}) implies \eqref{eq-4.3}. Moreover, it follows that
		\begin{align*}
			\int_{0}^{\tau}R\leq &C(T,\|\hat{\rho}^2\|_{L^1(\mathbb{T}^d)},q,d) \|\nabla\sqrt{\rho^2}\|^{\frac{d}{q}}_{L^2([0,\tau];L^2(\mathbb{T}^d))}\|\nabla\cdot V\|_{L^{q*}([0,T];L^q(\mathbb{T}^d))}\\ &+C(T,\|\hat{\rho}^2\|_{L^1(\mathbb{T}^d)},p,d)
			\|\nabla\sqrt{\rho^2}\|^{\frac{d+p}{p}}_{L^2([0,\tau];L^2(\mathbb{T}^d))}\|V\|_{L^{p*}([0,T];L^p(\mathbb{T}^d))}\\
			\leq& C(N,T,V,\|\hat{\rho}^2\|_{L^1(\mathbb{T}^d)},p,q,d),\ \mathbb{P}-a.s.
		\end{align*}
		which implies \eqref{eq-4.2} holds. Similarly, \eqref{eq-4.1} holds. By employing Lemma \ref{lem-4.1}, we have
	\begin{equation}\label{kk-52}
	\mathbb{E}\sup_{t\in[0,\tau]}\left\|\rho^{1}(\cdot,t)-\rho^{2}(\cdot,t)\right\|_{L^{1}\left(\mathbb{T}^{d}\right)}\le C(N,T,V,\|\hat{\rho}^2\|_{L^1(\mathbb{T}^d)},p,q,d)\mathbb{E}\left\|\hat{\rho}^{1}-\hat{\rho}^{2}\right\|_{L^{1}\left(\mathbb{T}^{d}\right)}.
	\end{equation}
	For any $\delta>0$, we can deduce that

\begin{align}\label{kk-53}
	\mathbb{P}\Big(\sup_{r\in[0,T]}\|\rho^{1}(\cdot,r)-\rho^{2}(\cdot,r)\|_{L^{1}(\mathbb{T}^{d})}>\delta\Big)=0.
	\end{align}
	In fact, it is worth noting that
	\begin{align*}
	\Big\{\omega:\sup_{r\in[0,T]}\|\rho^{1}(\cdot,r)-\rho^{2}(\cdot,r)\|_{L^{1}(\mathbb{T}^{d})}>\delta\Big\}\subset&\Big\{\omega: \sup_{r\in[0,T\wedge\tau_{N}]}\big\|\rho^{1}(\cdot,r)-\rho^{2}(\cdot,r)\big\|_{L^{1}(\mathbb{T}^{d})}>\delta,  \tau_{N}>T\Big\}\\
	&\cup\Big\{\omega: \tau_{N}\le T\Big\}.
	\end{align*}
	 Based on Chebyshev's inequality, (\ref{kk-52}), the property $\hat{\rho}^1=\hat{\rho}^2\ \text{a.e. in }\mathbb{T}^d$, and \eqref{eq-4.31}, we can get (\ref{kk-53}). Since  $\delta$ is arbitrary, it follows that
	\begin{equation*}	\mathbb{P}\Big(\sup_{r\in[0,T]}\|\rho^1(\cdot,r)-\rho^2(\cdot,r)\|_{L^1(\mathbb{T}^d)}=0\Big)=1.
	\end{equation*}
\end{proof}
\begin{remark}\label{rem-4.6}
	We point out that the LPS condition (i.e., Assumption (A1) on $V$) is sufficient to guarantee the integrability of the kernel term $\int_0^t\int_{\mathbb{T}^d}\varphi(x,\rho)\nabla\cdot(\rho V(r)\ast\rho)$ in (\ref{eq-2.6}) and the forthcoming entropy dissipation estimates (see Proposition \ref{pro-5.4} below). However, it is not strong enough to admit the uniqueness. In fact, if we want to avoid imposing any conditions on $\nabla\cdot V$, it requires to handle $\int_{0}^{t}\int_{\mathbb{T}^d}|\rho^2V(r)\ast\nabla(\rho^1-\rho^2)|$ instead of $\int_{0}^{t}\int_{\mathbb{T}^d}\left|\rho^2(\nabla\cdot V(r))\ast(\rho^1-\rho^2)\right|$ in (\ref{eq-4.28}). In this case, it is difficult to control $\nabla(\rho^1-\rho^2)$  by $\|\rho^1-\rho^2\|_{{L^1(\mathbb{T}^d)}}$, which results in the inapplicability of the stochastic Gronwall's inequality. Thus, for technical reasons, we need an additional condition Assumption (A2).
\end{remark}

As a consequence of (\ref{kk-52}) and Chebyshev's inequality, we get the continuity of solutions with respect to the initial data.
\begin{lemma}\label{lem-4.7}
	Let $\{\hat{\rho}^n,\hat{\rho}\}_{n\geq1}\subset\text{{\rm{Ent}}}\left(\mathbb{T}^{d}\right)$ satisfy
	$\lim\limits_{n\to\infty}\|\hat{\rho}^n-\hat{\rho}\|_{L^1(\mathbb{T}^d)}=0$.
	Let $ \rho_n,\rho$ be renormalized kinetic solutions of (\ref{eq-2.2}) in the sense of Definition \ref{def-2.4} with initial values $\rho_n(\cdot ,0)=\hat{\rho}^n,\rho(\cdot, 0)=\hat{\rho}$, respectively. Then for any $\delta>0$, we have
	\begin{equation*}	\lim_{n\to\infty}\mathbb{P}\Big(\sup_{t\in[0,T]}\|\rho_n(t)-\rho(t)\|_{L^1(\mathbb{T}^d)}>\delta\Big)=0.
	\end{equation*}
\end{lemma}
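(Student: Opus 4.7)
The plan is to apply the quantitative continuity estimate \eqref{kk-55} from the proof of Theorem \ref{the-uniq} with $\rho^1 = \rho_n$ and $\rho^2 = \rho$, and then exploit that the only coupling between $n$ and the righthand side of \eqref{kk-55} enters through $\mathbb{E}\|\hat{\rho}^n - \hat{\rho}\|_{L^1(\mathbb{T}^d)}$, which tends to $0$ by assumption, and through the $L^1$-size of the initial data, which is uniformly bounded along the sequence. First I would fix the stopping time $\tau_N$ associated to the limiting solution $\rho$ (not to $\rho_n$), namely
\begin{equation*}
\tau_N := \inf\Bigl\{t\in[s,T]:\ \int_s^t \|\nabla\sqrt{\rho}\|^2_{L^2(\mathbb{T}^d)}\,\mathrm{d}r > N\Bigr\},
\end{equation*}
which is independent of $n$ and satisfies $\mathbb{P}(\tau_N \le T) \to 0$ as $N \to \infty$ by \eqref{eq-4.31} together with the entropy estimate \eqref{eq-2.5}.

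Next, I would re-run the computation leading to \eqref{kk-55} with the pair $(\rho_n,\rho)$ in place of $(\rho^1,\rho^2)$. Since $\|\hat{\rho}^n\|_{L^1(\mathbb{T}^d)} \to \|\hat{\rho}\|_{L^1(\mathbb{T}^d)}$, the quantity $\Lambda := \sup_n \|\hat{\rho}^n\|_{L^1(\mathbb{T}^d)} \vee \|\hat{\rho}\|_{L^1(\mathbb{T}^d)}$ is finite, and \eqref{eq-2.4} allows us to replace every appearance of $\|\hat{\rho}\|_{L^1(\mathbb{T}^d)}$ in the constants of Section~\ref{sec-4} by $\Lambda$. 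This yields a constant $C = C(N,T,V,\Lambda,p,q,d)$, \emph{independent of $n$}, such that for every $\delta > 0$,
\begin{equation*}
\mathbb{P}\Bigl(\sup_{r\in[s,T]}\|\rho_n(\cdot,r) - \rho(\cdot,r)\|_{L^1(\mathbb{T}^d)} > \delta\Bigr)
\le \frac{C}{\delta}\,\mathbb{E}\|\hat{\rho}^n - \hat{\rho}\|_{L^1(\mathbb{T}^d)} + \mathbb{P}(\tau_N \le T).
\end{equation*}

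Finally I would conclude by an $\varepsilon$-argument: given $\varepsilon>0$, choose $N$ large so that $\mathbb{P}(\tau_N \le T) < \varepsilon/2$, and then send $n \to \infty$ so that $\frac{C(N,\dots)}{\delta}\mathbb{E}\|\hat{\rho}^n - \hat{\rho}\|_{L^1(\mathbb{T}^d)} < \varepsilon/2$. Since $\varepsilon$ is arbitrary, this yields the claim. The only mildly delicate point, and the one I would double-check carefully, is the uniform-in-$n$ dependence of the constant $C$: one has to verify that every constant that appeared through the Gagliardo–Nirenberg and stochastic Gronwall applications in Theorem \ref{the-uniq} depends on the initial $L^1$-mass only through an upper bound, so that replacing $\|\hat{\rho}\|_{L^1(\mathbb{T}^d)}$ by $\Lambda$ is legitimate; the $L^1$-convergence of $\hat{\rho}^n$ to $\hat{\rho}$ makes this automatic.
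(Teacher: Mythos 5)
Your proposal is correct and follows exactly the route the paper intends: Lemma \ref{lem-4.7} is stated as a direct consequence of the estimate (\ref{kk-55}), applied with $\rho^1=\rho_n$, $\rho^2=\rho$, so that the stopping time $\tau_N$ (built from $\nabla\sqrt{\rho^2}=\nabla\sqrt{\rho}$) is independent of $n$, followed by the same ``first $N$ large, then $n\to\infty$'' argument. Your only cautionary point, the uniform-in-$n$ dependence of the constant, is indeed harmless: the constants in (\ref{eq-4.28})--(\ref{kk-51}) enter through $\|\hat{\rho}^2\|_{L^1(\mathbb{T}^d)}$ and the Gronwall bound on $R$ involving only $\rho^2=\rho$, so with your choice of the pair they are automatically $n$-independent (and your $\Lambda$-bound covers any residual dependence on $\|\hat{\rho}^n\|_{L^1(\mathbb{T}^d)}$).
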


\section{Approximation equation}\label{sec-5}
To demonstrate the existence of renormalized kinetic solutions to (\ref{eq-2.2}), we introduce an approximation equation with regularized coefficients. Specifically, we will introduce smooth sequences that approximate the square root function $\sqrt{\cdot}$ and the kernel $V$, respectively.

According to  \cite[Lemma 5.18]{FG21}, the choice of the approximation of the square root function is stated as follows.
\begin{lemma}\label{rem-5.1}
There exists a sequence $\left\{\sigma_{n}\right\}_{n\in\mathbb{N}}$ that fulfills $\sigma_{n}(\cdot)\to\sqrt{\cdot}$ in $\mathrm{C}_{\text{loc}}^{1}((0,\infty))$ as $n\to\infty$. Furthermore, $\sigma_{n}$ has the following properties.
	 \begin{enumerate}
	 \item[(1)] $\sigma_{n}\in C(\left[0,\infty\right))\cap C^{\infty}((0,\infty))$ with $\sigma_{n}(0)=0$ and $\sigma_{n}'\in C^{\infty}_c(\left[0,\infty\right))$ for every $n\in\mathbb{N}$,
	 \item[(2)] there exists $c\in(0,\infty)$ such that for every $\xi\in[0,\infty)$,
	 \begin{align}\label{kk-5.2}
	 	|\sigma_n(\xi)|\le c\sqrt\xi \text{ uniformly with respect to } n\in\mathbb{N},
	 \end{align}
	 \item[(3)] for every $\delta\in(0,1)$, there exists $c_{\delta}\in(0,\infty)$ such that
	 \begin{align}\label{eq-5.2}		[\sigma_n'(\xi)]^4\mathbf{1}_{\{\xi\ge\delta\}}+|\sigma_n(\xi)\sigma_n'(\xi)|^2\mathbf{1}_{\{\xi\ge\delta\}}\le c_{\delta} \text{ uniformly with respect to } n\in\mathbb{N}.
	\end{align}
	\end{enumerate}
	
\end{lemma}

For the kernel $V$,  let $V_{\gamma}:=((V\wedge(1/\gamma))\vee(-1/\gamma))\ast\eta_{\gamma}$, for every $\gamma>0$ and $(t,x)\in\mathbb{R}\times\mathbb{T}^d$, where $\ast$ denotes spatial convolution, and $\eta_{\gamma}(x)=\frac{1}{\gamma^{d}}\eta(\frac{x}{\gamma})$ is the standard convolution kernel on $\mathbb{T}^d$. Suppose that $V$ satisfies Assumption (A1), then $V_{\gamma}$ satisfies  the following properties.

\begin{lemma}\label{kk-66}
	Let $V$ satisfy Assumption (A1). For $V_{\gamma}$ as defined above, we have
	\begin{enumerate}
		\item[(1)]  for every $\gamma>0$, $V_{\gamma}$ satisfies  Assumption (A1),
		\item[(2)] for every $\gamma>0$, $V_{\gamma}\in L^{\infty}([0,T];L^{\infty}(\mathbb{T}^d;\mathbb{R}^d))$,
		\item[(3)] $V_{\gamma}\to V$ in $L^{p^{*}}\left([0,T];L^p(\mathbb{T}^d;\mathbb{R}^d)\right)$ with  $\frac{d}{p}+\frac{2}{p^{*}}\le1$, $2\le p^*\leq\infty$ and $d<p\le\infty$ as $\gamma\to0$.
	\end{enumerate}
\end{lemma}
\begin{proof}
With the aid of convolutional Young's inequality and dominated convergence theorem, the proof can be easily achieved.
\end{proof}

With the help of $\sigma_{n}$ and $V_{\gamma}$,
we consider the following regularized stochastic PDE
\begin{align}\notag
	\mathrm{d}\rho^{n,\gamma}=&\Delta\rho^{n,\gamma}\mathrm{d}t-\nabla\cdot
\left(\sigma_{n}\left(\rho^{n,\gamma}\right)\mathrm{d}W^F\right)
-\nabla\cdot(\rho^{n,\gamma}V_{\gamma}(t)\ast\rho^{n,\gamma})\mathrm{d}t\\
\label{eq-5.1}
 &+\frac{1}{2}\nabla\cdot\left(F_1\left[\sigma_{n}'\left(\rho^{n,\gamma}\right)\right]^{2}\nabla\rho^{n,\gamma}+\sigma_{n}(\rho^{n,\gamma})\sigma_{n}'(\rho^{n,\gamma}) F_{2}\right)\mathrm{d}t,
\end{align}
with $\rho^{n,\gamma}(\cdot,0)=\hat{\rho}$.


Now, we introduce the definition of weak solutions to \eqref{eq-5.1}.
\begin{definition}\label{def-5.2}
 Let $V$ satisfy Assumption (A1) and $\hat{\rho}\in L^{\infty}\left(\mathbb{T}^{d}\right)$ be nonnegative. For any  $\gamma>0$ and $n\in\mathbb{N}$, a weak solution of (\ref{eq-5.1}) with initial data $\rho^{n,\gamma}(\cdot,0)=\hat{\rho}$ is a nonnegative, $\mathcal{F}_t$-predictable, $L^m(\mathbb{T}^d)$-continuous (for some $m\geq2$) process $\rho^{n,\gamma}$ such that almost surely $\rho^{n,\gamma}\in L^2([0,T];H^1(\mathbb{T}^d))$ and for every $\psi\in C^{\infty}(\mathbb{T}^d)$, almost surely for every $t\in[0,T]$,
	\begin{align}\label{eq-5.3}
	&\int_{\mathbb{T}^d}\rho^{n,\gamma}(x,t)\psi(x)=\int_{\mathbb{T}^d}\hat{\rho}\psi-\int_0^t\int_{\mathbb{T}^d}\nabla\rho^{n,\gamma}\cdot\nabla\psi+\int_0^t\int_{\mathbb{T}^d}(\rho^{n,\gamma} V_{\gamma}(r)\ast\rho^{n,\gamma})\cdot\nabla\psi\notag\\
	&+\int_0^t\int_{\mathbb{T}^d}\sigma_n(\rho^{n,\gamma})\nabla\psi\cdot \mathrm{d}W^F-\frac{1}{2}\int_0^t\int_{\mathbb{T}^d}F_1[\sigma_n'(\rho^{n,\gamma})]^2\nabla\rho^{n,\gamma}\cdot\nabla\psi-\frac{1}{2}\int_0^t\int_{\mathbb{T}^d}\sigma_n(\rho^{n,\gamma})\sigma_n'(\rho^{n,\gamma})F_2\cdot\nabla\psi.
	\end{align}
\end{definition}

For any nonnegative function $\hat{\rho}\in L^{\infty}\left(\mathbb{T}^{d}\right)$, let $\rho^{n,\gamma}$ be a weak solution of (\ref{eq-5.1}) in the sense of Definition \ref{def-5.2} with initial data $\rho^{n,\gamma}(\cdot,0)=\hat{\rho}$.
 By choosing $\psi=1$ in \eqref{eq-5.3} and using the nonnegativity of $\rho^{n,\gamma}$, we deduce that almost surely for every $t\in[0,T]$,
	\begin{align}\label{eq-5.4}
	\| \rho^{n,\gamma}(\cdot,t)\|_{L^{1}(\mathbb{T}^d)}=\|\hat{\rho}\|_{L^{1}(\mathbb{T}^d)}.
	\end{align}
Moreover, we can show that the weak solution $\rho^{n,\gamma}$ of (\ref{eq-5.1}) satisfies
 the following entropy estimate and $L^m(\mathbb{T}^d)-$norm estimate.  As mentioned in the introduction, this $L^m(\mathbb{T}^d)-$estimate is not  uniform with respect to the regularized parameter $\gamma$, which is different from \cite{FG21}.

\subsection{Entropy estimate and $L^m(\mathbb{T}^d)-$norm estimate }\label{subsec-5.1}

%

Let $\Psi:[0,\infty)\to\mathbb{R}$ be the unique function satisfying $\Psi(0)=0$ with $\Psi'(\xi)=\log(\xi)$. Recall that $\text{{\rm{Ent}}}(\mathbb{T}^{d})$ is defined by \eqref{kk-44}.
\begin{proposition}\label{pro-5.4}
	Suppose that $V$ satisfies Assumption (A1). Let $\hat{\rho}\in \text{{\rm{Ent}}}\left(\mathbb{T}^{d}\right)$. For any  $\gamma>0$ and $n\in\mathbb{N}$, let $\rho^{n,\gamma}$ be a weak solution of (\ref{eq-5.1}) in the sense of Definition \ref{def-5.2} with initial data $\rho^{n,\gamma}(\cdot,0)=\hat{\rho}$. Then there exists a constant $c\in(0,\infty)$ depending on $T, d,\|\hat{\rho}\|_{L^1(\mathbb{T}^d)}$ and $\|V\|_{L^{p*}([0,T];L^p(\mathbb{T}^d;\mathbb{R}^d))}$ such that
	\begin{align}\label{eq-5.5}
	&\mathbb{E}\Big[\sup_{t\in[0,T]}\int_{\mathbb{T}^{d}}\Psi(\rho^{n,\gamma}(x,t))\Big]
+\mathbb{E}\Big[\int_{0}^{T}\int_{\mathbb{T}^{d}}|\nabla \sqrt{\rho^{n,\gamma}}|^{2}\Big]\le \int_{\mathbb{T}^{d}}\Psi(\hat{\rho})+c.
	\end{align}
\end{proposition}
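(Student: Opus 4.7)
The strategy is to apply It\^o's formula to the entropy functional $E(\rho)=\int_{\mathbb{T}^d}\Psi(\rho(x,t))$ along the It\^o form of \eqref{eq-5.1}. Since $\Psi'(\xi)=\log\xi$ blows up at zero, I would first regularize by $\Psi_\delta'(\xi)=\log(\xi+\delta)$ for $\delta>0$; the It\^o formula can then be justified by testing \eqref{eq-5.3} against a mollified version of $\Psi_\delta'(\rho)$ and passing to the limit in the mollifier, using the pathwise regularity $\rho\in L^2([s,T];H^1(\mathbb{T}^d))$. The parameter $\delta$ is sent to zero at the very end via Fatou's lemma (on the dissipation) and dominated convergence (on the remaining bounded pieces).

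For the drift, integration by parts converts the Laplacian into $-\int|\nabla\rho|^2/(\rho+\delta)$, which tends to $-4\int|\nabla\sqrt\rho|^2$ as $\delta\to 0$ by Lemma \ref{lem-2}; this is the dissipation that will land on the left-hand side of \eqref{eq-5.5}. The It\^o correction $\tfrac12\nabla\cdot(F_1[\sigma'(\rho)]^2\nabla\rho+\sigma(\rho)\sigma'(\rho)F_2)$ is expected to combine with the quadratic-variation contribution $\tfrac12\int(\rho+\delta)^{-1}\sum_k|\nabla\cdot(\sigma(\rho)f_k)|^2$ coming from It\^o's formula: after integration by parts and the structural identities $\nabla\cdot F_2=\tfrac12\Delta F_1=0$, the genuinely singular pieces cancel and what remains is bounded uniformly in $\delta$ by using Assumption (B2) (namely $|\sigma|\le c\sqrt{\cdot}$ and the local boundedness of $[\sigma']^4+(\sigma\sigma')^2$ away from zero) together with mass conservation \eqref{eq-5.4}. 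Finally, $-\int_s^{\cdot}\int\log(\rho+\delta)\,\nabla\cdot(\sigma(\rho)\,dW^F)$ is a martingale whose supremum is handled by the Burkholder--Davis--Gundy inequality; after one integration by parts in $x$, its bracket is majorized via $\sigma^2\le c\rho$ and \eqref{eq-5.4} by a constant $c(T,F,\|\hat\rho\|_{L^1})$.

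The nontrivial new step is the nonlocal kernel term. Integration by parts and the limit $\delta\to 0$ give
\[
-\int_s^t\!\!\int_{\mathbb{T}^d}\log\rho\,\nabla\!\cdot\!(\rho V(r)\ast\rho)\,dr = 2\int_s^t\!\!\int_{\mathbb{T}^d}\sqrt\rho\,\nabla\sqrt\rho\cdot\bigl(V(r)\ast\rho\bigr)\,dr.
\]
Cauchy--Schwarz in space, the convolution Young inequality, and the Gagliardo--Nirenberg estimate \eqref{eq-3.6} bound this pointwise in time by $C\|\nabla\sqrt\rho\|_{L^2(\mathbb{T}^d)}^{1+d/p}\|V(r)\|_{L^p(\mathbb{T}^d)}\|\hat\rho\|_{L^1(\mathbb{T}^d)}^{3/2-d/(2p)}$; integrating in time via the H\"older-type estimate \eqref{eq-3.1} yields the overall bound $C\|V\|_{L^{p^*}([s,T];L^p)}\|\hat\rho\|_{L^1}^{3/2-d/(2p)}\|\nabla\sqrt\rho\|_{L^2([s,T]\times\mathbb{T}^d)}^{1+d/p}$. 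Because $1+d/p<2$ by the LPS hypothesis $d<p$, Young's inequality splits this as $\varepsilon\|\nabla\sqrt\rho\|_{L^2([s,T]\times\mathbb{T}^d)}^{2}+C_\varepsilon(T,V,\|\hat\rho\|_{L^1},p,d)$, and taking $\varepsilon$ small compared with the Laplacian coefficient $4$ allows the first piece to be absorbed into the dissipation on the left.

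The hard part will be the simultaneous treatment of the It\^o correction and the quadratic variation for the regularized entropy $\Psi_\delta$: one must show that the a priori $\delta$-singular expressions involving $(\rho+\delta)^{-1}|\sigma(\rho)|^2|\nabla f_k|^2$ and $[\sigma'(\rho)]^2/(\rho+\delta)$ telescope on $\{\rho>0\}$ into quantities that remain uniformly integrable as $\delta\to 0$. This is the analogue of the classical It\^o-to-Stratonovich cancellation for $\Phi$-SPDE with conservative noise, and it crucially uses Assumption (B2) together with the divergence-free conditions $\nabla\cdot F_2 = \tfrac12\Delta F_1 = 0$.
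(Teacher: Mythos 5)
Your overall architecture (regularize $\Psi$ by $\Psi_\delta'(\xi)=\log(\xi+\delta)$, apply It\^o's formula, kill the $F_2$-term by $\nabla\cdot F_2=0$ and bound the $F_3$-term by $\sigma^2(\xi)\le c\xi$, estimate the kernel term through $2\int\sqrt\rho\,\nabla\sqrt\rho\cdot(V\ast\rho)$ with Gagliardo--Nirenberg, \eqref{eq-3.1}/\eqref{eq-3.3} and Young's inequality using $1+d/p<2$, then let $\delta\to0$ by Fatou) is the same as the paper's. But there is a genuine gap in the crucial absorption step. You end the kernel (and implicitly the martingale) estimate with a term $\varepsilon\,\mathbb{E}\int_s^T\int_{\mathbb{T}^d}|\nabla\sqrt\rho|^2+C_\varepsilon$ and declare that the first piece "is absorbed into the dissipation". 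Definition \ref{def-5.2} only gives \emph{pathwise} $\rho\in L^2([s,T];H^1(\mathbb{T}^d))$, with no moment bound; a priori $\mathbb{E}\int_s^T\int_{\mathbb{T}^d}|\nabla\sqrt\rho|^2$ may be infinite, in which case subtracting it from both sides proves nothing. Worse, at fixed $\delta$ the dissipation produced on the left carries the factor $\rho/(\rho+\delta)$ while the term you want to absorb does not, so the absorption is not even a pathwise triviality. This is exactly why Assumption (B3) appears in the hypotheses, and you never use it: the paper first runs a \emph{crude} version of the argument in which the kernel term is bounded by $\int_s^T\int\frac{\rho}{\rho+\delta}|\nabla\sqrt\rho|^2+C(\|\hat\rho\|_{L^1})\int_s^T\|V(r)\|^2_{L^\infty}$ (see \eqref{eq-5.11}), yielding the $\delta$-uniform bound \eqref{eq-5.13}; it then identifies the weak $L^2(\Omega\times[s,T]\times\mathbb{T}^d)$ limit of $\sqrt{2\rho/(\rho+\delta)}\,\nabla\sqrt\rho$ as $\sqrt2\,\nabla\sqrt\rho$ and uses Stampacchia's lemma on $\{\rho=0\}$ to obtain the finite, quantitative bound \eqref{kk-59} on $\mathbb{E}\int_s^T\int|\nabla\sqrt\rho|^2$ (non-uniform in the later regularization of $V$, but finite). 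Only with this finiteness in hand is the refined LPS estimate \eqref{eq-5.14} absorbed, producing a constant depending solely on $\|V\|_{L^{p^*}([0,T];L^p)}$ as claimed in \eqref{eq-5.5}. Without this preliminary stage your proof does not close.

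A secondary inaccuracy: your claim that after one integration by parts the martingale bracket is "majorized \ldots by a constant $c(T,F,\|\hat\rho\|_{L^1})$" is not correct. One integration by parts leaves the integrand $\frac{\sigma(\rho)}{\rho+\delta}\nabla\rho\le 2c\,|\nabla\sqrt\rho|$, so the BDG bound still contains $\big(\int_{\mathbb{T}^d}|\nabla\sqrt\rho|\big)^2$; the paper handles it with Young's inequality, again dumping an $\mathbb{E}\int\int\frac{\rho}{\rho+\delta}|\nabla\sqrt\rho|^2$ piece into the dissipation (see \eqref{eq-5.10}) — which returns you to the same absorption issue. (A constant bound would require a \emph{second} integration by parts onto the noise coefficients, controlled by $F_3$, which you do not perform.)
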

\begin{proof}
For the above $\Psi$, we firstly introduce a sequence of smooth approximating functions denoted by $\Psi_{\delta}$ with $\delta\in(0,1)$. Here, we require that $\Psi_{\delta}(0)=0$  and $\Psi_{\delta}'(\xi)=\log (\xi+\delta)$. Applying It\^{o}'s formula \cite{Krylov} and by the nonnegativity of $\rho^{n,\gamma}$, we deduce that almost surely for every $t\in[0,T]$,
	\begin{align}\label{eq-5.6}
	\left.\int_{\mathbb{T}^{d}}\Psi_{\delta}(\rho^{n,\gamma}(x,r))\right|_{r=0}^{r=t}
+\int_{0}^{t}\int_{\mathbb{T}^{d}}\frac{4\rho^{n,\gamma}}{\rho^{n,\gamma}+\delta}\left|\nabla \sqrt{\rho^{n,\gamma}}\right|^{2}=K^{\text{ker}}_{t}+K^{\text{mart}}_{t}+K^{\text{err}}_{t},
	\end{align}
	where
	\begin{align*} K^{\text{ker}}_{t}&=\int_{0}^{t}\int_{\mathbb{T}^{d}}\frac{1}{\rho^{n,\gamma}+\delta}(\rho^{n,\gamma} V_{\gamma}(r)\ast\rho^{n,\gamma})\cdot\nabla\rho^{n,\gamma},\\ K^{\text{mart}}_{t}&=\int_{0}^{t}\int_{\mathbb{T}^{d}}\frac{2\sqrt{\rho^{n,\gamma}}\sigma_n(\rho^{n,\gamma})}{\rho^{n,\gamma}+\delta}\nabla\sqrt{\rho^{n,\gamma}}\cdot\mathrm{d}W^F,\\ K^{\text{err}}_{t}&=\frac{1}{2}\int_{0}^{t}\int_{\mathbb{T}^{d}}\frac{1}{\rho^{n,\gamma}+\delta}\left(\sigma_n(\rho^{n,\gamma})\sigma_n'(\rho^{n,\gamma})F_{2}\cdot\nabla\rho^{n,\gamma}+\sigma_n^2(\rho^{n,\gamma}) F_{3}\right).
	\end{align*}
	We firstly proceed with the term $K^{\text{err}}_{t}$. Thanks to the nonnegativity of $\rho^{n,\gamma}$, \eqref{kk-5.2} and the boundedness of $F_{3}$, by using $\nabla\cdot F_2=0$, combining with the assumption of $\sigma_n$, it follows that there exists a constant $c\in(0,\infty)$ independent of $\delta$ such that almost surely
	\begin{align}\label{eq-5.9}
		\sup_{t\in[0,T]}K^{\text{err}}_{t}\le c(T).
	\end{align}

	Regarding to the term $K^{\text{mart}}_{t}$, it follows from \eqref{kk-5.2}, the Burkholder-Davis-Gundy inequality and H\"{o}lder's inequality that there exists $c\in(0,\infty)$ depending on $F_1$ such that
	\begin{align}
\mathbb{E}\sup_{t\in[0,T]}K^{\text{mart}}_{t}\le&\mathbb{E}\Big[\int_{0}^{T}\int_{\mathbb{T}^{d}}\frac{\rho^{n,\gamma}}{\rho^{n,\gamma}+\delta}\Big|\nabla\sqrt{\rho^{n,\gamma}}\Big|^{2}\Big]+4c^2, \label{eq-5.10}
	\end{align}
	where $c$ is independent of $\delta\in(0,1)$. It remains to consider the term $K^{\text{ker}}_{t}$. By Young's inequality and (\ref{eq-5.4}), we have almost surely that
\begin{align}\label{eq-5.11}
\sup_{t\in[0,T]}K^{\text{ker}}_{t}\leq&\int^T_0\int_{\mathbb{T}^d}\frac{\rho^{n,\gamma}}{\rho^{n,\gamma}+\delta}|\nabla\sqrt{\rho^{n,\gamma}}|^2+\int^T_0\int_{\mathbb{T}^d}\rho^{n,\gamma} (V_{\gamma}\ast\rho^{n,\gamma})^2\notag\\
\leq&\int^T_0\int_{\mathbb{T}^d}\frac{\rho^{n,\gamma}}{\rho^{n,\gamma}+\delta}|\nabla\sqrt{\rho^{n,\gamma}}|^2+C(\|\hat{\rho}\|_{L^1(\mathbb{T}^d)})\int^T_0\|V_{\gamma}\|_{L^{\infty}(\mathbb{T}^d;\mathbb{R}^d)}^2.
\end{align}
Based on (\ref{eq-5.6})-\eqref{eq-5.11} and Lemma \ref{kk-66}, there exists a constant $c\in(0,\infty)$ independent of $\delta\in(0,1)$ such that
	\begin{align}\label{eq-5.12} \mathbb{E}\Big[\sup_{t\in[0,T]}\int_{\mathbb{T}^{d}}\Psi_{\delta}(\rho^{n,\gamma}(x,t))\Big]
+\mathbb{E}\Big[\int_{0}^{T}\int_{\mathbb{T}^{d}}\frac{2\rho^{n,\gamma}}{\rho^{n,\gamma}+\delta}\Big|\nabla\sqrt{\rho^{n,\gamma}}\Big|^{2}\Big]
	\le\int_{\mathbb{T}^{d}} \Psi_{\delta}(\hat{\rho})+c(T, \|\hat{\rho}\|_{L^1(\mathbb{T}^d)},\gamma).
	\end{align}
By the definition of $\Psi_{\delta}$ and (\ref{eq-5.12}), we deduce that there exists $\delta_0>0$ such that
\begin{align}\label{eq-5.13}
\sup_{0<\delta\leq \delta_0}\mathbb{E}\int^T_0\Big\|\sqrt{\frac{2\rho^{n,\gamma}}{\rho^{n,\gamma}+\delta}}\nabla\sqrt{\rho^{n,\gamma}}\Big\|_{L^2(\mathbb{T}^d)}^2
\le\int_{\mathbb{T}^{d}} \Psi_{\delta_0}\left(\hat{\rho}\right)+c(T, \|\hat{\rho}\|_{L^1(\mathbb{T}^d)},\gamma).
\end{align}
Then, there exists a subsequence $\{\delta_k\}_{k\geq1}\subset \{\delta\}_{0<\delta\leq \delta_0}$ and $f^*\in L^2(\Omega\times[0,T];L^2(\mathbb{T}^d))$ such that $\sqrt{\frac{2\rho^{n,\gamma}}{\rho^{n,\gamma}+\delta_k}}\nabla\sqrt{\rho^{n,\gamma}}\rightarrow f^*$ weakly in $L^2(\Omega\times[0,T];L^2(\mathbb{T}^d))$ as $k\rightarrow\infty$.

On the other hand,
for almost every $t\in [0,T]$, and $\varphi\in C^{\infty}(\mathbb{T}^d; \mathbb{R}^d)$, we have
\begin{align*}
&\Big\langle\sqrt{\frac{2\rho^{n,\gamma}}{\rho^{n,\gamma}+\delta}}\nabla\sqrt{\rho^{n,\gamma}}-\sqrt{2}\nabla\sqrt{\rho^{n,\gamma}},\varphi\Big\rangle
=\sqrt{2}\Big\langle \nabla \int^{\sqrt{\rho^{n,\gamma}}}_0\Big(\frac{\xi}{\sqrt{\xi^2+\delta}}-1\Big)\mathrm{d}\xi ,\varphi\Big\rangle\\
=&-\sqrt{2}\Big\langle \int^{\sqrt{\rho^{n,\gamma}}}_0\Big(\frac{\xi}{\sqrt{\xi^2+\delta}}-1\Big)\mathrm{d}\xi ,\nabla\cdot\varphi\Big\rangle
= -\sqrt{2}\Big\langle\Big[\sqrt{\rho^{n,\gamma}+\delta}-\sqrt{\rho^{n,\gamma}}-\sqrt{\delta}\Big],\nabla\cdot\varphi\Big\rangle,
\end{align*}
then, by (\ref{eq-5.4}) and the dominated convergence theorem, it gives
\begin{align}\label{kk-58}
\lim_{\delta\rightarrow 0}\mathbb{E} \int^T_0 \Big\langle\Big(\sqrt{\frac{2\rho^{n,\gamma}}{\rho^{n,\gamma}+\delta}}\nabla\sqrt{\rho^{n,\gamma}}
-\sqrt{2}\nabla\sqrt{\rho^{n,\gamma}}\Big),\varphi \Big\rangle \mathrm{d}r=0.
\end{align}
As a result of (\ref{kk-58}), for any $\varphi\in C^{\infty}(\mathbb{T}^d;\mathbb{R}^d)$, we deduce that
\begin{align}\label{kk-64}
\mathbb{E}\int^T_0\langle\sqrt{2}\nabla\sqrt{\rho^{n,\gamma}},\varphi\rangle \mathrm{d}r=\lim_{k\rightarrow\infty}\mathbb{E}\int^T_0\Big\langle\sqrt{\frac{2\rho^{n,\gamma}}{\rho^{n,\gamma}+\delta_k}}\nabla\sqrt{\rho^{n,\gamma}},\varphi\Big\rangle \mathrm{d}r=\mathbb{E}\int^T_0\langle f^*,\varphi\rangle \mathrm{d}r,
\end{align}
which implies $f^*=\sqrt{2}\nabla\sqrt{\rho^{n,\gamma}}$  almost surely  for almost every $(t,x)\in [0,T]\times\mathbb{T}^d$. Thus, $\sqrt{\frac{2\rho^{n,\gamma}}{\rho^{n,\gamma}+\delta}}\nabla\sqrt{\rho^{n,\gamma}}\rightarrow\sqrt{2}\nabla\sqrt{\rho^{n,\gamma}}$ weakly in $L^2(\Omega\times[0,T];L^2(\mathbb{T}^d))$, as $\delta\rightarrow0$. By the lower semi-continuity of $L^2(\Omega\times[0,T];L^2(\mathbb{T}^d))$-norm, we deduce from (\ref{eq-5.13}) that
\begin{align}\label{kk-59}
\mathbb{E}\int^T_0\|\nabla\sqrt{\rho^{n,\gamma}}\|_{L^2(\mathbb{T}^d)}^2\le\int_{\mathbb{T}^{d}} \Psi_{\delta}\left(\hat{\rho}\right)+c(T, \|\hat{\rho}\|_{L^1(\mathbb{T}^d)},\gamma).
\end{align}
With the aid of (\ref{kk-59}), the kernel term can be reestimated as follows.
	\begin{align}\notag
	\sup_{t\in[0,T]}K^{\text{ker}}_{t}\le&\int_{0}^{T}\|\nabla\rho^{n,\gamma}\cdot V_{\gamma}(r)\ast\rho^{n,\gamma}\|_{L^1(\mathbb{T}^d)}\\ \notag
	\le&\int_{0}^{T}\|V_{\gamma}(r)\|_{L^{p}(\mathbb{T}^d;\mathbb{R}^d)}\|\hat{\rho}\|^{\frac32-\frac{d}{2p}}_{L^{1}(\mathbb{T}^{d})}\|\nabla\sqrt{\rho^{n,\gamma}}\|^{\frac{d}{p}+1}_{L^{2}(\mathbb{T}^{d})}\\
\label{eq-5.14}
	\le& \int_{0}^{T}\int_{\mathbb{T}^d}|\nabla\sqrt{\rho^{n,\gamma}}|^2+c(\|\hat{\rho}\|_{L^1(\mathbb{T}^d)},\|V\|_{L^{p*}([0,T];L^p(\mathbb{T}^d;\mathbb{R}^d))},d,T ),	\end{align}
 where the nonnegativity of $\rho^{n,\gamma}$, \eqref{eq-3.3}, \eqref{eq-3.1} and Lemma \ref{kk-66} are used.
	Combining (\ref{eq-5.6}), \eqref{eq-5.9}, \eqref{eq-5.10} and \eqref{eq-5.14}, we deduce that
	\begin{align}\label{eq-5.15}
	&\mathbb{E}\left[\sup_{t\in[0,T]}\int_{\mathbb{T}^{d}}\Psi_{\delta}(\rho^{n,\gamma}(x,t))\right]
+3\mathbb{E}\left[\int_{0}^{T}\int_{\mathbb{T}^{d}}\frac{\rho^{n,\gamma}}{\rho^{n,\gamma}+\delta}\left|\nabla\sqrt{\rho^{n,\gamma}}\right|^{2}\right]\notag \\
	&\le\int_{\mathbb{T}^{d}} \Psi_{\delta}\left(\hat{\rho}\right)+\int_{0}^{T}\int_{\mathbb{T}^d}|\nabla\sqrt{\rho^{n,\gamma}}|^2+c(T, d,\|\hat{\rho}\|_{L^1(\mathbb{T}^d)},\|V\|_{L^{p*}([0,T];L^p(\mathbb{T}^d;\mathbb{R}^d))}).
	\end{align}
Since $\{\rho^{n,\gamma}=0\}=\left\{\sqrt{\rho^{n,\gamma}}=0\right\}$, by (\ref{kk-59}) and Stampacchia's lemma (see Evans \cite[Chapter 5, Exercise 18]{Eva10} ), we deduce that almost surely
	\begin{align*}
\int_{0}^{T}\int_{\mathbb{T}^{d}}\mathbf{1}_{\{\rho^{n,\gamma}=0\}}\left|\nabla\sqrt{\rho^{n,\gamma}}\right|^{2}=0 .
	\end{align*}
Thus, with the aid of Fatou's lemma, passing to the limit $\delta\to 0$ in (\ref{eq-5.15}), for some $c\in(0,\infty)$,
	\begin{align*}
	&\mathbb{E}\Big[\sup_{t\in[0,T]}\int_{\mathbb{T}^{d}}\Psi(\rho^{n,\gamma}(x,t))\Big]+\mathbb{E}\Big[\int_{0}^{T}\int_{\mathbb{T}^{d}}\Big|\nabla \sqrt{\rho^{n,\gamma}}\Big|^{2}\Big]\le \int_{\mathbb{T}^{d}}\Psi(\hat{\rho})+c(T, d,\|\hat{\rho}\|_{L^1(\mathbb{T}^d)},\|V\|_{L^{p*}([0,T];L^p(\mathbb{T}^d;\mathbb{R}^d))}).
	\end{align*}
\end{proof}


Regarding to the $L^m(\mathbb{T}^d)-$norm estimate, we only need to make estimates of the extra kernel term since the rest of the terms can directly refer to the results of {\color{black}\cite[Proposition 5.7]{FG21}}. For the additional kernel term, it can be estimated by using H\"{o}lder's and convolutional Young's inequalities,  \eqref{eq-5.4} and Gronwall lemma. For the sake of simplicity, we omit the proof.
\begin{proposition}\label{pro-6.2}
	Suppose that $V$ satisfies Assumption (A1). Let $\hat{\rho}\in L^{\infty}(\mathbb{T}^d)$ be a nonnegative function. For any  $\gamma>0$ and $n\in\mathbb{N}$, let $\rho^{n,\gamma}$ be a solution of \eqref{eq-5.1} in the sense of Definition \ref{def-5.2} with initial data $\rho^{n,\gamma}(\cdot,0)=\hat{\rho}$. Then there exists a constant $\lambda\in(0,\infty)$ depending on $m, T,  \gamma,\|\hat{\rho}\|_{L^m(\mathbb{T}^d)}$, and $\|\hat{\rho}\|_{L^1(\mathbb{T}^d)}$ such that
	\begin{align}\label{eq-6.1}	&\sup_{t\in[0,T]}\mathbb{E}\left[\int_{\mathbb{T}^d}(\rho^{n,\gamma})^m(x,t)\right]+\mathbb{E}\left[\int^T_0\int_{\mathbb{T}^d}|\rho^{n,\gamma}|^{(m-2)}|\nabla\rho^{n,\gamma}|^2\right]
\le\lambda.
	\end{align}
\end{proposition}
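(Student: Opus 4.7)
The strategy is to apply It\^o's formula to the functional $\rho\mapsto\int_{\mathbb{T}^d}\rho^m$ along the weak solution of \eqref{eq-5.1}, take expectation, and close a linear Gronwall inequality in time. The decisive structural observation is that by Young's convolution inequality and the mass conservation \eqref{eq-5.4},
\[
\|V(r)\ast\rho(r)\|_{L^{\infty}(\mathbb{T}^d)}\le \|V(r)\|_{L^{\infty}(\mathbb{T}^d)}\|\hat\rho\|_{L^{1}(\mathbb{T}^d)},
\]
which converts the kernel term into something that, against $\nabla(\rho^m)$, splits by Cauchy--Schwarz and Young into an absorbable multiple of the natural dissipation $\int\rho^{m-2}|\nabla\rho|^2$ plus a term linear in $\int\rho^m$ with integrable-in-$r$ coefficient $\|V(r)\|_{L^\infty}^2$.

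Since the weak solution is only known to lie in $L^2([s,T];H^1(\mathbb{T}^d))$ and to be bounded in $L^1$, I would first apply It\^o's formula to a regularized functional $F_{\varepsilon,M}(\xi)$ approximating $\xi^m$ with uniformly controlled first two derivatives (e.g.\ $F_{\varepsilon,M}(\xi)=((\xi\wedge M)+\varepsilon)^{m}-\varepsilon^{m}$), and then pass to the limit $M\to\infty$, $\varepsilon\to 0$ by monotone convergence on the nonnegative dissipation and Fatou/dominated convergence on the remaining pieces. After integration by parts, the $\Delta\rho$ term contributes $-m(m-1)\int_s^t\!\int\rho^{m-2}|\nabla\rho|^2$. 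The quadratic-variation It\^o correction from $-\nabla\cdot(\sigma(\rho)\,\mathrm{d}W^F)$ is
\[
\tfrac{m(m-1)}{2}\!\int\!\rho^{m-2}\bigl(F_1[\sigma'(\rho)]^2|\nabla\rho|^2+2\sigma(\rho)\sigma'(\rho)F_2\cdot\nabla\rho+\sigma^2(\rho)F_3\bigr),
\]
while the Stratonovich-to-It\^o correction in the drift of \eqref{eq-5.1}, after integrating $m\int\rho^{m-1}$ by parts, gives exactly the negative of the $F_1[\sigma'(\rho)]^2$ contribution plus $-\tfrac{m(m-1)}{2}\int\rho^{m-2}\sigma(\rho)\sigma'(\rho)F_2\cdot\nabla\rho$. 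The $F_1$ pieces cancel; what remains on the $F_2$ side collapses to $\tfrac{m(m-1)}{2}\int F_2\cdot\nabla\Theta(\rho)$ with $\Theta'(\xi)=\xi^{m-2}\sigma(\xi)\sigma'(\xi)$, which vanishes after one more integration by parts because $\nabla\cdot F_2=0$. The only surviving noise correction is $\tfrac{m(m-1)}{2}\int\rho^{m-2}\sigma^2(\rho)F_3$, controlled via Assumption (B2)(1) by $C(m,\|F_3\|_{L^\infty})\int\rho^{m-1}\le C(m,\|F_3\|_{L^\infty},\|\hat\rho\|_{L^1})(1+\int\rho^m)$.

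Combining these estimates and taking expectation (so the martingale vanishes), I obtain an inequality of the shape
\[
\mathbb{E}\!\int_{\mathbb{T}^d}\!\rho^m(\cdot,t)+\tfrac{m(m-1)}{2}\,\mathbb{E}\!\int_s^t\!\!\int_{\mathbb{T}^d}\!\rho^{m-2}|\nabla\rho|^2\le \int_{\mathbb{T}^d}\!\hat\rho^m+C\!\int_s^t\bigl(1+\|V(r)\|_{L^\infty(\mathbb{T}^d)}^2\bigr)\mathbb{E}\!\int_{\mathbb{T}^d}\!\rho^m\,\mathrm{d}r,
\]
with $C$ depending on $m,\|\hat\rho\|_{L^1(\mathbb{T}^d)},\|F_3\|_{L^\infty}$. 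Since $r\mapsto 1+\|V(r)\|_{L^\infty}^2$ is integrable on $[0,T]$ by Assumption (B3), Gronwall's lemma in $t$ yields the pointwise-in-$t$ bound on $\mathbb{E}\int\rho^m$, and substituting back at $t=T$ gives the dissipation bound, producing \eqref{eq-6.1}.

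The main technical obstacle is rigorously justifying the formal It\^o computation at the level of weak solutions: one must choose the truncation $F_{\varepsilon,M}$ so that both the variational It\^o formula applies in $L^2([s,T];H^1)$ and every term admits a limit as $\varepsilon\to 0$, $M\to\infty$. The kernel term limit follows from dominated convergence using the uniform $L^{\infty}$-control of $V\ast\rho$; the $\sigma^2 F_3$ correction from the sub-linear growth bound in Assumption (B2)(1) combined with $\|\rho\|_{L^1}=\|\hat\rho\|_{L^1}$; and the dissipation limit from Fatou's lemma using the nonnegativity of $\rho^{m-2}|\nabla\rho|^2$. Cancellation of the $F_1[\sigma'(\rho)]^2|\nabla\rho|^2$ terms across the truncations must be tracked with care, since $\sigma'$ is only bounded under Assumption (B1) on the support of $F_{\varepsilon,M}''$; this is what makes the simultaneous use of Assumptions (B1)--(B2) essential in this step.
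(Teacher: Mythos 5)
Your proposal is correct and follows essentially the same route as the paper: It\^o's formula for $\int_{\mathbb{T}^d}\rho^m$, cancellation of the $F_1[\sigma'(\rho)]^2$ contributions, absorption of the kernel term into half the dissipation using $\|V(r)\ast\rho\|_{L^\infty}\le\|V(r)\|_{L^\infty}\|\hat\rho\|_{L^1}$, elimination of the $F_2$ term through the antiderivative $\Phi_{\sigma,m}$ (your $\Theta$) together with $\nabla\cdot F_2=0$, the bound $\xi^{m-2}\sigma^2(\xi)\le c(1+\xi^m)$ for the $F_3$ term, and finally expectation plus Gronwall using Assumption (B3). The only difference is that you spell out the truncation $F_{\varepsilon,M}$ justifying the It\^o step at the level of weak solutions, which the paper invokes without detail; this is a welcome but not divergent refinement.
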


\begin{remark}
 Note that Lemma \ref{kk-66} implies that the entropy estimate holds uniformly with respect to the regularized parameters $n$ and $\gamma$.
However, from Lemma \ref{kk-66}, it follows that the $L^m(\mathbb{T}^d)-$norm estimate is not uniform with respect to $\gamma$, which results in the inapplicability of the method used by [FG24] to find a limiting kinetic measure.
\end{remark}

\subsection{Existence of renormalized kinetic solutions to the approximation equation}\label{subsec-5.2}
We will firstly show the existence of weak solutions of approximation equation \eqref{eq-5.1}, which are also weak in the probabilistic sense. Then, we introduce the definition of the renormalized kinetic solution to \eqref{eq-5.1}. Finally, we prove that the weak solutions and renormalized kinetic solutions are equivalent.

\begin{theorem}\label{the-6.4}
Suppose that $V$ satisfies Assumption (A1). Let $\hat{\rho}\in L^{\infty}(\mathbb{T}^d)$ be a nonnegative function. Then for any  $\gamma>0$ and $n\in\mathbb{N}$, there exists a stochastic basis $(
\tilde{\Omega},\tilde{\mathcal{F}},\{\tilde{\mathcal{F}}(t)\}_{t\in[0,T]},\tilde{\mathbb{P}})$, a Brownian motion $\tilde{W}^F$, and a process $\tilde{\rho}^{n,\gamma}$, which is a weak solution of \eqref{eq-5.1} in the sense of Definition \ref{def-5.2} with initial data $\tilde{\rho}^{n,\gamma}(\cdot,0)=\hat{\rho}$. Furthermore, $\tilde{\rho}^{n,\gamma}$ satisfies the estimate (\ref{eq-6.1}).
\end{theorem}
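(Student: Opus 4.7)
The plan is to construct $\tilde{\rho}$ via a Galerkin approximation combined with a stochastic compactness argument of Skorokhod/Jakubowski type. First, I would fix the eigenbasis $\{e_k\}_{k\geq 1}$ of $-\Delta$ on $\mathbb{T}^d$ and let $P_n$ denote the $L^2$-orthogonal projection onto $H_n = \mathrm{span}\{e_1,\dots,e_n\}$. Projecting each term of \eqref{eq-5.1} onto $H_n$ and using the initial data $P_n\hat{\rho}$ produces a finite-dimensional It\^o SDE. Thanks to Assumption (B1), after a smooth cutoff away from zero in the coefficients containing $\sigma'$, the drift and diffusion are locally Lipschitz on $H_n$; classical SDE theory then yields a unique strong $H_n$-valued solution $\rho_n$, nonnegativity follows from a truncation/reflection argument using $\sigma(0)=0$, and testing against constants gives mass conservation \eqref{eq-5.4} since $\nabla\cdot F_2 = 0$.

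Second, I would derive $n$-uniform a priori estimates. The proofs of Propositions \ref{pro-5.4} and \ref{pro-6.2} carry over verbatim at the Galerkin level, because they only use the It\^o formula applied to $\int\Psi_\delta(\rho_n)$ and $\int\rho_n^m$, integration by parts, the structural conditions $\nabla\cdot F_2=0$ and $|\sigma(\xi)|\le c\sqrt{\xi}$, together with Assumption (B3) to handle the kernel via Young's convolution inequality. These yield uniform control of $\mathbb{E}\sup_t\int\Psi(\rho_n)$, $\mathbb{E}\int_s^T\|\nabla\sqrt{\rho_n}\|_{L^2(\mathbb{T}^d)}^2$, and $\sup_{t}\mathbb{E}\|\rho_n(t)\|_{L^m(\mathbb{T}^d)}^m$. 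Combined through $\nabla\rho_n = 2\sqrt{\rho_n}\nabla\sqrt{\rho_n}$ (Lemma \ref{lem-2}), they produce a bound for $\rho_n$ in $L^2(\Omega;L^2([s,T];H^1(\mathbb{T}^d)))$.

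Third, I would upgrade these bounds to tightness. Using the Galerkin equation, Assumption (B3) to bound the kernel term, and the Burkholder--Davis--Gundy inequality for the stochastic term, one obtains a uniform fractional-in-time estimate $\mathbb{E}\|\rho_n\|_{W^{\alpha,r}([s,T];H^{-k}(\mathbb{T}^d))}^r \le C$ for some $\alpha\in(0,1/2)$, $r\ge 2$ and $k$ large enough. A standard Aubin--Lions--Simon lemma then gives tightness of $\{\mathrm{Law}(\rho_n)\}$ on $L^2([s,T]\times\mathbb{T}^d)\cap C([s,T];H^{-k}(\mathbb{T}^d))$, while the Wiener processes are automatically tight. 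Jakubowski's extension of the Skorokhod representation theorem yields a new stochastic basis $(\tilde{\Omega},\tilde{\mathcal{F}},\{\tilde{\mathcal{F}}(t)\},\tilde{\mathbb{P}})$, processes $\tilde{\rho}_n\to\tilde{\rho}$ almost surely in the above topology, and Wiener processes $\tilde{W}_n^F\to\tilde{W}^F$ almost surely.

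Passing to the limit in the weak formulation \eqref{eq-5.3} for $\tilde{\rho}_n$ is routine for the linear Laplacian term and the initial datum, and the stochastic integral is identified by the classical martingale-representation technique (or directly using strong $L^2$ convergence and $|\sigma(\xi)|\le c\sqrt{\xi}$). The It\^o correction terms involving $\sigma'(\rho)$ are handled using the compact support of $\sigma'$ from Assumption (B1) combined with the $L^m$-estimate. The main obstacle is the nonlocal quadratic term $\int\int(\rho_n V(r)\ast\rho_n)\cdot\nabla\psi$: since it is quadratic in $\rho_n$, pointwise or weak convergence is insufficient. This is exactly where the $L^2([s,T]\times\mathbb{T}^d)$ strong convergence from Aubin--Lions is crucial; combined with the boundedness of $\rho_n\mapsto V\ast\rho_n$ from $L^2$ to $L^2$ guaranteed by Assumption (B3) and Young's convolution inequality, the product passes to the limit. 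Finally, \eqref{eq-6.1} is inherited by $\tilde{\rho}$ via weak lower semicontinuity applied to the $n$-uniform bounds.
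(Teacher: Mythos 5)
Your overall architecture --- spectral Galerkin approximation, uniform a priori bounds, a fractional-in-time estimate feeding Aubin--Lions--Simon, Prokhorov/Skorokhod on a new stochastic basis, and passage to the limit with the nonlocal term handled by strong $L^2([s,T]\times\mathbb{T}^d)$ convergence together with H\"older and convolution Young inequalities under Assumption (B3) --- is essentially the same as the paper's proof of Theorem \ref{the-6.4}.

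However, there is a genuine gap at the Galerkin level. You assert that nonnegativity of $\rho_n$ ``follows from a truncation/reflection argument using $\sigma(0)=0$'', and then that the proofs of Propositions \ref{pro-5.4} and \ref{pro-6.2} ``carry over verbatim at the Galerkin level''. Spectral Galerkin truncation does not preserve positivity: $\rho_n$ takes values in $\mathrm{span}\{e_1,\dots,e_n\}$, the projection is not order-preserving, and the projected system obeys no comparison or maximum principle --- already the heat semigroup projected onto finitely many Fourier modes can turn nonnegative data negative. Consequently $\Psi_\delta(\rho_n)$, $\nabla\sqrt{\rho_n}$, the identity $\nabla\rho_n=2\sqrt{\rho_n}\nabla\sqrt{\rho_n}$ of Lemma \ref{lem-2}, the bound $|\sigma(\xi)|\le c\sqrt{\xi}$, and the $L^1$-norm identity \eqref{eq-5.4} (which is more than conservation of $\int\rho_n$) are all unavailable for $\rho_n$, so the entropy estimate cannot be invoked at this stage. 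The paper avoids this by using only a simplified $m=2$ energy estimate at the Galerkin level, which needs no sign condition and already gives the uniform $L^{\infty}([s,T];L^2(\mathbb{T}^d))\cap L^2([s,T];H^1(\mathbb{T}^d))$ bounds used for tightness, and by proving nonnegativity only for the limit: since $\tilde\rho\in L^2([s,T];H^1(\mathbb{T}^d))$ solves \eqref{eq-5.1} weakly, one applies the It\^o formula to a mollified absolute value $a^{\delta}(\tilde\rho)$, uses $\sigma(0)=0$, $\nabla\cdot F_2=0$ and the integrability of the kernel term to control the contributions supported on $\{0<\tilde\rho<\delta\}$, and sends $\delta\to0$ to get $\tilde{\mathbb{E}}\int_{\mathbb{T}^d}\tilde\rho^{-}(t)\le 0$, hence $\tilde\rho\ge0$, after which \eqref{eq-5.4} and the estimates of Propositions \ref{pro-5.4} and \ref{pro-6.2} (in particular \eqref{eq-6.1}) hold for $\tilde\rho$. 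Your proposal omits this limit-level positivity argument entirely, and as written the claimed Galerkin-level positivity, and everything you build on it, does not hold; replacing the Galerkin-level entropy estimate by the $L^2$ energy estimate and adding the It\^o argument for $\tilde\rho^{-}$ repairs the proof.
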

\begin{proof}
	 Let $\left\{e_{k}\right\}_{k\in\mathbb{N}}\subset H^{1}\left(\mathbb{T}^{d}\right)$ be an orthonormal basis of  $L^{2}\left(\mathbb{T}^{d}\right)$. Let {\color{black}$0\leq\lambda_1<\lambda_2 <...<\infty$} be the corresponding eigenvalues such that $(-\Delta)e_k=\lambda_ke_k$ for $k\geq 1$.
For every $K\in\mathbb{N}$, define a finite dimensional noise $ W^{F,K}(x,t):=\sum_{k=1}^{K}f_{k}(x)B_{t}^{k}$. For every $M\in\mathbb{N}$, let $\Pi_{M}:L^{2}\left(\mathbb{T}^{d}\times[0,T]\right) \to L^{2}\left(\mathbb{T}^{d}\times[0,T]\right)$ be the projection map defined by
\begin{align*}
  \Pi_{M}g(x,t)=\sum_{k=1}^{M}( g(t),e_{k}) e_{k}(x) \ {\rm{for\ every}}\ g\in L^{2}\left(\mathbb{T}^{d}\times[0,T]\right).
\end{align*}
For any $m>0$, define a smooth  function $S^m:\mathbb{R}\to\mathbb{R}$ satisfying $-m-1\le S^m\le m+1$ and
\begin{align}
	S^m(\xi)=
	\left\{
	\begin{array}{ll}
		\xi, & {\rm{if}}\ -m\le\xi\le m, \\
		m+1, &{\rm{if}}\ \xi\ge m+1,\\
		-m-1, &{\rm{if}}\ \xi\le -m-1,\\
		{\rm{smooth}},  &{\rm{otherwise.}}
	\end{array}
	\right.
\end{align}

We consider the following finite dimensional projected equation
	\begin{align}\label{eq-6.8}
	\mathrm{d}\rho^{n,\gamma,m}_M=&\Pi_{M}\left(\Delta\rho^{n,\gamma,m}_M\mathrm{d}t-\nabla\cdot\left(\sigma_n(\rho^{n,\gamma,m}_M)\mathrm{d} W^{F,K}\right)-\nabla\cdot(S^m(\rho^{n,\gamma,m}_M)V_{\gamma}(t)\ast\rho^{n,\gamma,m}_M)\mathrm{d}t\right)\notag\\
	&+\Pi_{M}\left(\frac{1}{2}\nabla\cdot\left(F_{1}^{K}\left[\sigma_n^{\prime}(\rho^{n,\gamma,m}_M)\right]^{2}\nabla\rho^{n,\gamma,m}_M+\sigma_n(\rho^{n,\gamma,m}_M)\sigma_n^{\prime}(\rho^{n,\gamma,m}_M)F_{2}^{K}\right)\mathrm{d}t\right),
	\end{align}
	where $F_{1}^{K}=\sum_{k=1}^{K}f_{k}^{2}$, $F_{2}^{K}=\sum_{k=1}^{K}f_{k}\nabla f_{k}$.

   Since $\sigma_n$ and $S^m$ are both smooth, bounded and the derivative of $\sigma_n$ is also bounded, equation \eqref{eq-6.8} has a unique probabilistically strong solution $\rho^{n,\gamma,m}_M$ on the time interval $[0,T]$. Through a simple estimation, it can be inferred that the sequence $\{\rho^{n,\gamma,m}_M\}_{M\geq 1}$ is bounded in $L^{\infty}([0,T];L^2(\mathbb{T}^d))\cap L^2([0,T];H^1(\mathbb{T}^d))$.
By a standard tightness argument,
it follows from Prokhorov's theorem and the Skorohod representation theorem that there exists a stochastic basis $(\tilde{\Omega},\tilde{\mathcal{F}},\{\tilde{\mathcal{F}}(t)\}_{t\in[0,T]},\tilde{\mathbb{P}},\tilde{W}^{F,K})$ with expectation $\tilde{\mathbb{E}}$, random variables $\{\tilde{\rho}^{n,\gamma,m}_M\}_{M\geq 1}$ and $\tilde{\rho}^{n,\gamma,m}\in L^{2}(\tilde{\Omega};L^2([0,T];L^{2}(\mathbb{T}^{d})))$ such that
 $\tilde{\rho}^{n,\gamma,m}_M$ has the same law as $\rho^{n,\gamma,m}_M$ and as $M\rightarrow+\infty$,
  \begin{align}\label{qq-r-6}
    \|\tilde{\rho}^{n,\gamma,m}_M-\tilde{\rho}^{n,\gamma,m}\|_{L^2([0,T];L^2(\mathbb{T}^d))}\rightarrow0,\quad \tilde{\mathbb{P}}-a.s.
  \end{align}
  Following the proof idea of \cite[Proposition 5.17]{FG21} and applying It\^{o} formula, we can conclude that $\tilde{\rho}^{n,\gamma,m}$ is nonnegative.
  It implies 
  the $L^1$-norm conservation property. Applying It\^{o} formula, along with the property of the smooth function $S^m$, following a proof analogous to that of Proposition \ref{pro-6.2}, we deduce that $\{\tilde{\rho}^{n,\gamma,m}\}_{m\geq 0}$ is uniformly bounded in $L^{\infty}([0,T];L^2(\mathbb{T}^d))\cap L^2([0,T];H^1(\mathbb{T}^d))$, with respect to the parameters $m$ and $K$.  
	Repeating a process similar to the above,  using a standard tightness argument along with Prokhorov's theorem and the Skorohod representation theorem,  there exists a stochastic basis $(\hat{\Omega},\hat{\mathcal{F}},\{\hat{\mathcal{F}}(t)\}_{t\in[0,T]},\hat{\mathbb{P}},\hat{W}^F)$ with expectation $\hat{\mathbb{E}}$, random variables $\{\hat{\rho}^{n,\gamma,m}\}_{m\geq 0}$ and $\hat{\rho}^{n,\gamma}\in L^{2}(\hat{\Omega};L^2([0,T];L^{2}(\mathbb{T}^{d})))$ such that
	$\hat{\rho}^{n,\gamma,m}$ has the same law as $\tilde{\rho}^{n,\gamma,m}$ and as $K\rightarrow+\infty$, $m\rightarrow+\infty$,
	\begin{align*}
		\|\hat{\rho}^{n,\gamma,m}-\hat{\rho}^{n,\gamma}\|_{L^2([0,T];L^2(\mathbb{T}^d))}\rightarrow0,\quad \hat{\mathbb{P}}-a.s.
	\end{align*}
	Moreover,  $\hat{\rho}^{n,\gamma}$ satisfies \eqref{eq-5.3} with respect to the new stochastic basis $(\hat{\Omega},\hat{\mathcal{F}},\{\hat{\mathcal{F}}(t)\}_{t\in[0,T]},\hat{\mathbb{P}},\hat{W}^F)$. {\color{black}The method employed here is standard (see, for instance, \cite[Proposition 5.4]{DG20}), thus we omit the details.} 
\end{proof}
\begin{remark}
The result of Theorem \ref{the-6.4} is the starting point to prove the existence of the probabilistically strong solution to (\ref{eq-1.1}). Concretely, we will apply Lemma \ref{lem-8.1} to the probabilistically weak solution constructed in Theorem \ref{the-6.4} to find a solution living in the original probability space. Since the conditions of Lemma \ref{lem-8.1} are in the sense of distribution, it is not necessary to emphasize the difference between the original probability space and the new one. Thus, with a little abuse of notations, the probability space, the Brownian motion and the weak solution in Theorem \ref{the-6.4} are still denoted by $(
\Omega,\mathcal{F},\{\mathcal{F}(t)\}_{t\in[0,T]},\mathbb{P})$, $W^F$ and $\rho^{n,\gamma}$, respectively.
\end{remark}


Now, we introduce the other definition of solutions to (\ref{eq-5.1}), which is called a renormalized kinetic solution.
\begin{definition}\label{def-6.3}
Suppose that $V$ satisfies Assumption (A1). Let $\hat{\rho}\in L^{\infty}(\mathbb{T}^d)$ be a nonnegative function. For any $\gamma>0$ and $n\in\mathbb{N}$, a renormalized kinetic solution of (\ref{eq-5.1}) with initial data $\rho^{n,\gamma}(\cdot,0)=\hat{\rho}$ is a nonnegative, $L^m(\mathbb{T}^d)$-continuous (for some $m\geq2$) , $\mathcal{F}_t$-predictable process $\rho^{n,\gamma}$ such that almost surely $\rho^{n,\gamma}\in L^2([0,T];H^1(\mathbb{T}^d))$ and almost surely for every $\psi\in\mathrm{C}_{c}^{\infty}\left(\mathbb{T}^{d}\times{\color{black}(0,\infty)}\right)$ and $t\in[0,T]$,
\begin{align}\label{eq-6.6}
&\int_{\mathbb{T}^{d}}{\color{black}\int_0^{\infty}}\chi^{n,\gamma}(x,\xi,t)\psi(x,\xi)=\int_{\mathbb{T}^{d}}{\color{black}\int_0^{\infty}}\bar{\chi}^{n,\gamma}\left(\hat{\rho}\right)\psi(x,\xi)-\int_{0}^{t}\int_{\mathbb{T}^{d}}\nabla\rho^{n,\gamma}\cdot(\nabla\psi)(x,\rho^{n,\gamma})\notag\\
&-\frac{1}{2}\int_{0}^{t}\int_{\mathbb{T}^{d}}F_{1}(x)\left[\sigma_n'(\rho^{n,\gamma})\right]^{2}\nabla\rho^{n,\gamma}\cdot(\nabla\psi)(x,\rho^{n,\gamma})-\frac{1}{2}\int_{0}^{t}\int_{\mathbb{T}^{d}}\sigma_n(\rho^{n,\gamma})\sigma_n'(\rho^{n,\gamma})F_{2}(x)\cdot(\nabla\psi)(x,\rho^{n,\gamma})\notag\\
&-\int_{0}^{t}\int_{\mathbb{T}^{d}}\left(\partial_{\xi}\psi\right)(x,\rho^{n,\gamma})|\nabla\rho^{n,\gamma}|^{2}+\frac{1}{2}\int_{0}^{t}\int_{\mathbb{T}^{d}}\left(\sigma_n(\rho^{n,\gamma})\sigma_n'(\rho^{n,\gamma})\nabla\rho^{n,\gamma}\cdot F_{2}(x)+F_{3}(x)\sigma_n^{2}(\rho^{n,\gamma})\right)\left(\partial_{\xi}\psi\right)(x,\rho^{n,\gamma})\notag\\
&-\int_{0}^{t}\int_{\mathbb{T}^{d}}\psi(x,\rho^{n,\gamma})\nabla\cdot(\rho^{n,\gamma} V_{\gamma}(r)\ast\rho^{n,\gamma})-\int_{0}^{t}\int_{\mathbb{T}^{d}}\psi(x,\rho^{n,\gamma})\nabla\cdot\left(\sigma_n(\rho^{n,\gamma})\mathrm{d}W^F\right),
\end{align}
where $\chi^{n,\gamma}:\mathbb{T}^d\times\mathbb{R}\times[0,T]\to\{0,1\}$ is the kinetic function given by $\chi^{n,\gamma}(x,\xi,t)=\mathbf{1}_{\{0<\xi<\rho^{n,\gamma}(x,t)\}}$, $\bar{\chi}^{n,\gamma}(\hat{\rho})(x,\xi)=\mathbf{1}_{\{0<\xi<\hat{\rho}(x)\}}$ and the kinetic measure $q^{n,\gamma}=\delta_{0}(\xi-\rho^{n,\gamma})|\nabla\rho^{n,\gamma}|^{2}$.
\end{definition}

Similar to \cite[Proposition 5.21]{FGold}, we can show that
the weak solution is equivalent to the renormalized kinetic solution. It reads as follows.
\begin{proposition}\label{pro-6.6}
	 Suppose that $V$ satisfies Assumption (A1). Let $\hat{\rho}\in L^{\infty}(\mathbb{T}^d)$ be a nonnegative function.  For any $\gamma>0$ and $n\in\mathbb{N}$, let $\rho^{n,\gamma}$ be a weak solution of \eqref{eq-5.1} in the sense of Definition \ref{def-5.2} with initial data $\rho^{n,\gamma}(\cdot,0)=\hat{\rho}$. Then $\rho^{n,\gamma}$ is a renormalized kinetic solution in the sense of Definition \ref{def-6.3}.
\end{proposition}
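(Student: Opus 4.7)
The plan is to derive the kinetic identity (6.6) from the weak formulation (5.3) by applying an It\^o-type formula to suitable nonlinear functionals of the weak solution $\rho$. For a given test function $\psi\in C_c^\infty(\mathbb{T}^d\times\mathbb{R})$, the key identity is that for the kinetic function $\chi(x,\xi,t)=\mathbf{1}_{\{0<\xi<\rho(x,t)\}}$,
\[
\int_{\mathbb{R}}\chi(x,\xi,t)\psi(x,\xi)\,d\xi=\Psi(x,\rho(x,t)),\qquad \Psi(x,u):=\int_0^u\psi(x,\xi)\,d\xi,
\]
so that the left-hand side of (6.6) equals $\int_{\mathbb{T}^d}\Psi(x,\rho(x,t))\,dx$. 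By the compact support of $\psi$ in $\xi$, the function $\Psi$ together with $\partial_u\Psi=\psi$ and $\partial_u^2\Psi=\partial_\xi\psi$ are bounded with bounded $x$-derivatives, so $\Psi(x,\rho)$ is a legitimate Lyapunov-type functional along $\rho$.

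Next, I would apply the generalized It\^o formula (of Krylov--Rozovskii variational type) to $\rho\mapsto\int_{\mathbb{T}^d}\Psi(x,\rho(x,\cdot))\,dx$ along the weak solution of (5.1). Using the chain rule $\nabla(\psi(x,\rho))=(\nabla_x\psi)(x,\rho)+(\partial_\xi\psi)(x,\rho)\nabla\rho$ and integrating by parts term by term, the drift of (5.1) contributes: (i) from the Laplacian, $-\int\nabla\rho\cdot(\nabla_x\psi)(x,\rho)$ together with the parabolic defect piece $-\int(\partial_\xi\psi)(x,\rho)|\nabla\rho|^2$, which identifies the kinetic measure $q=\delta_0(\xi-\rho)|\nabla\rho|^2$; (ii) from the It\^o--Stratonovich correction $\tfrac12\nabla\cdot(F_1[\sigma'(\rho)]^2\nabla\rho+\sigma\sigma'F_2)$, the analogous $-\tfrac12\int F_1[\sigma'(\rho)]^2\nabla\rho\cdot(\nabla_x\psi)(x,\rho)$ and $-\tfrac12\int\sigma\sigma'F_2\cdot(\nabla_x\psi)(x,\rho)$ plus $\partial_\xi\psi$-pieces; (iii) from the kernel, $-\int\psi(x,\rho)\nabla\cdot(\rho V(r)\ast\rho)$ directly, whose integrability is guaranteed by Lemma 3.4. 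The martingale $-\int\psi(x,\rho)\nabla\cdot(\sigma(\rho)\,dW^F)$ is preserved as is. For the quadratic variation contribution $\tfrac12\int(\partial_\xi\psi)(x,\rho)\,d\langle\rho,\rho\rangle$, one expands $\nabla\cdot(\sigma(\rho)f_k)=\sigma'(\rho)\nabla\rho\cdot f_k+\sigma(\rho)\nabla\cdot f_k$, sums in $k$, and uses the definitions of $F_1,F_2,F_3$; after combining with the $\partial_\xi\psi$-pieces produced in (ii) above, the terms involving $F_1[\sigma'(\rho)]^2|\nabla\rho|^2$ cancel and what remains is precisely $\tfrac12\int(\sigma\sigma'\nabla\rho\cdot F_2+F_3\sigma^2)(\partial_\xi\psi)(x,\rho)$ as in (6.6).

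The integrability required for every step follows from Assumptions (B1)--(B2) (which bound $\sigma$ by $\sqrt{\cdot}$ and control $\sigma'$ on $\operatorname{supp}_\xi\psi$ through (5.2)), the regularity $\rho\in L^2([s,T];H^1(\mathbb{T}^d))$ built into Definition 5.2, the boundedness of $F_1,F_2,F_3$, and Lemma 3.4 for the nonlocal drift. The main technical obstacle is the rigorous justification of the It\^o formula applied to the nonlinear functional $\int\Psi(x,\rho)\,dx$: equation (5.1) holds in $H^{-1}$ and $\psi(x,\rho)$ is only in $L^2$ a priori, so the standard variational It\^o formula does not apply directly. I would circumvent this by mollifying $\rho$ by $x$-convolution with a smooth kernel $\eta_\varepsilon$, applying the classical (finite-dimensional) It\^o formula to $\int\Psi(x,(\rho\ast\eta_\varepsilon)(x,t))\,dx$, and then letting $\varepsilon\to 0$ via dominated convergence, leveraging the uniform bounds of Propositions 5.4 and 6.2 and the integrability of the kernel term from Lemma 3.4. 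The compact support of $\psi$ in $\xi$ is crucial here: on the set $\{(x,\rho(x,t))\in\operatorname{supp}\psi\}$, $\rho$ is bounded away from $0$ and $\infty$, keeping $\sigma'(\rho)$ and $[\sigma'(\rho)]^2$ integrable. The remainder of the argument parallels Proposition 5.21 of \cite{FG21}, the only genuinely new ingredient being the handling of the nonlocal term, which is already covered by Lemma 3.4.
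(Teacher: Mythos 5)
Your proposal is correct and follows essentially the same route the paper intends: the paper omits the proof and refers to Proposition 5.21 of \cite{FG21}, which is precisely the It\^o-formula/mollification argument you outline (testing $\int_{\mathbb{T}^d}\Psi(x,\rho)$ with $\Psi(x,u)=\int_0^u\psi(x,\xi)\,\mathrm{d}\xi$, identifying the kinetic measure from the Laplacian, and cancelling the $F_1[\sigma'(\rho)]^2|\nabla\rho|^2$ contributions), with the new nonlocal term handled through Lemma \ref{lem-3.4} exactly as you do. One small inaccuracy: Definition \ref{def-6.3} admits test functions $\psi\in C_c^{\infty}(\mathbb{T}^d\times\mathbb{R})$ whose $\xi$-support may contain $0$, so $\rho$ need not be bounded away from zero on $\mathrm{supp}\,\psi$; this is harmless because Assumption (B1) gives $\sigma'\in C_c^{\infty}([0,\infty))$, hence $\sigma'$ is globally bounded and the integrability you invoke does not actually rely on staying away from $\xi=0$.
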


 \subsection{Tightness of approximating solutions }\label{subsec-5.3}
In this part, we aim to show the $L^1([0,T];L^1(\mathbb{T}^d))-$tightness of the laws of $\left\{\rho^{n,\gamma}\right\}_{n\in\mathbb{N},\gamma\in(0,1)}$  constructed in Theorem \ref{the-6.4}. {\color{black}According to the Aubin--Lions--Simon lemma  (\cite[Corollary 5]{ALS}), any bounded set in $L^1([0,T];W^{1,1}(\mathbb{T}^d)) \cap W^{\beta,1}([0,T];H^{-l}(\mathbb{T}^d))$ for $\beta\in(0,1/2),$
is relatively compact in $L^1([0,T];L^1(\mathbb{T}^d))$.} Hence, an important ingredient is to establish a stable $W^{\beta,1}([0,T];H^{-l}(\mathbb{T}^d))$-estimate. However, we cannot get such an estimation directly for $\left\{\rho^{n,\gamma}\right\}_{n\in\mathbb{N},\gamma\in(0,1)}$
in the presence of the singular term  $\nabla\cdot(F_{1}\rho^{-1}\nabla\rho)$ in \eqref{eq-2.2} which admits a singularity at $\rho=0$. To solve this problem,
we follow the idea used in \cite{FG21} to introduce a smooth function $h_{\delta}$ defined by Definition \ref{def-7.1} below to keep the solution away from zero. As a result, we can establish the $W^{\beta,1}([0,T];H^{-l}(\mathbb{T}^d))$-estimate for $h_{\delta}(\rho)$, which is presented in Proposition \ref{pro-7.3}.

Let $B$ be a Banach space.
 Given $1\le p<\infty$, $0<\sigma<1$, let $W^{\sigma, p}([0,T];B)$ be the fractional Sobolev space defined by \cite{Mar87}. For every $\delta\in(0,1)$, let $\psi_{\delta}\in C^{\infty}([0,\infty))$ be a smooth nondecreasing function satisfying $0\le\psi_{\delta}\le1$ and

%
%

\begin{align*}
 \psi_{\delta}(\xi)=
  \left\{
    \begin{array}{ll}
    1, & {\rm{if}}\ \xi\ge\delta, \\
      0, &{\rm{if}}\ \xi\le\delta/2,\\
      {\rm{smooth}}, &{\rm{otherwise}}.
    \end{array}
  \right.
\end{align*}

Clearly, $\left|\psi_{\delta}'(\xi)\right|\le c/\delta$ for some $c \in(0, \infty)$ independent of $\delta$.
\begin{definition}\label{def-7.1}
	For every $\delta\in(0,1)$, let $h_{\delta}\in\mathrm{C}^{\infty}([0,\infty))$ be defined by
	\begin{align*}
	h_{\delta}(\xi)=\psi_{\delta}(\xi) \xi\ \ \ {\rm{for\ every}}\ \xi\in[0, \infty).
	\end{align*}
\end{definition}
From the definition of $h_{\delta}$, it follows that $h'_{\delta}$ is supported on $[\frac{\delta}{2},\infty)$ and
	\begin{align}\label{eq-7.1}
	h_{\delta}'(\xi)=\psi_{\delta}'(\xi)\xi+\psi_{\delta}(\xi)\le c\mathbf{1}_{\left\{\xi\ge\frac{\delta}{2}\right\}},
	\end{align}
where $c$ is independent of $\delta$.
Moreover, there exists a constant $c\in(0,\infty)$  depending on $\delta$ such that
	\begin{align}\label{eq-7.2}
	h_{\delta}''(\xi)=\psi_{\delta}''(\xi)\xi+2\psi'_{\delta}(\xi)\le c(\delta)\mathbf{1}_{\left\{\frac{\delta}{2}\le\xi\le\delta\right\}}.
	\end{align}


To get tightness of $\rho^{n,\gamma}$ on $L^{1}\left([0,T];L^{1}\left(\mathbb{T}^{d}\right)\right)$, we need to introduce a new metric on $L^1([0,T];L^1(\mathbb{T}^d))$. For every $\delta\in (0,1)$, let $h_{\delta}$ be defined as in Definition \ref{def-7.1}. Let $D: L^1([0,T];L^1(\mathbb{T}^d))\times L^1([0,T];L^1(\mathbb{T}^d))\rightarrow [0,\infty)$ be defined by
\begin{align}
  D(f,g)=\sum^{\infty}_{k=1}2^{-k}
  \Big(\frac{\|h_{1/k}(f)-h_{1/k}(g)\|_{L^1([0,T];L^1(\mathbb{T}^d))}}{1+\|h_{1/k}(f)-h_{1/k}(g)\|_{L^1([0,T];L^1(\mathbb{T}^d))}}\Big).
\end{align}
It is proved by \cite{FG21} that the function $D$ is a metric on $L^1([0,T];L^1(\mathbb{T}^d))$ and the metric topology determined by $D$ is equal to strong norm topology on $L^1([0,T];L^1(\mathbb{T}^d))$.

{\color{black}Following the approach of \cite{FG21}, to obtain the $L^{1}\!\big([0,T];L^{1}(\mathbb{T}^d)\big)$-tightness of the laws of $\{\rho^{n,\gamma}\}_{n\in\mathbb{N},\,\gamma\in(0,1)}$, it suffices to show that, for every $k\in \mathbb{N}$, the laws $\{h_{1/k}(\rho^{n,\gamma})\}_{n\geq 1, \gamma>0}$ are tight on $L^{1}\!\big([0,T];L^{1}(\mathbb{T}^d)\big)$ in the strong topology. Thus, based on the Aubin-Lions-Simon lemma, we need to make estimates of
 $L^1([0,T];W^{1,1}(\mathbb{T}^d))$ and $W^{\beta,1}([0,T];H^{-l}(\mathbb{T}^d))$ norms  for $h_{\delta}(\rho^{n,\gamma})$.} The following result can be easily proved by using \eqref{eq-7.1}, Lemma \ref{lem-2}, H\"{o}lder's inequality and (\ref{eq-5.5}).
\begin{lemma}\label{lem-7.2}
	 Suppose that $V$ satisfies Assumption (A1). Let $\hat{\rho}\in L^{\infty}(\mathbb{T}^d)$ be a nonnegative function.  For any  $\gamma>0$ and $n\in\mathbb{N}$, let $\rho^{n,\gamma}$ be a weak solution of (\ref{eq-5.1}) in the sense of Definition \ref{def-5.2} with initial data $\rho^{n,\gamma}(\cdot,0)=\hat{\rho}$. Then, there exists a constant $c\in(0,\infty)$ independent of $\delta$ such that
	\begin{align*}
	\mathbb{E}\left[\|h_{\delta}(\rho^{n,\gamma})\|_{L^{1}\left([0,T];W^{1,1}\left(\mathbb{T}^{d}\right)\right)}\right]\le c(T, d,\|\hat{\rho}\|_{L^{1}(\mathbb{T}^{d})},\|V\|_{L^{p*}([0,T];L^p(\mathbb{T}^d;\mathbb{R}^d))}).
	\end{align*}
\end{lemma}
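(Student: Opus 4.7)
The plan is to control the $W^{1,1}$ norm of $h_\delta(\rho)$ by splitting into its $L^1$ piece and the gradient piece, using mass conservation for the former and the entropy estimate (Proposition \ref{pro-5.4}) for the latter.

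First, for the $L^1$ control, observe from Definition \ref{def-7.1} that $0 \le h_\delta(\xi) \le \xi$ for all $\xi \ge 0$. Together with the mass conservation \eqref{eq-5.4}, this immediately gives
\begin{equation*}
\mathbb{E}\int_s^T \|h_\delta(\rho(\cdot,t))\|_{L^1(\mathbb{T}^d)}\,\mathrm{d}t \le (T-s)\|\hat{\rho}\|_{L^1(\mathbb{T}^d)}.
\end{equation*}

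Next, for the gradient piece, I would compute $h_\delta'(\xi) = \psi_\delta'(\xi)\xi + \psi_\delta(\xi)$ and notice the crucial uniform-in-$\delta$ bound: since $\psi_\delta'$ is supported in $(\delta/2,\delta)$ with $|\psi_\delta'|\le c/\delta$, we get $|\psi_\delta'(\xi)\xi|\le c$ on this support, and $|\psi_\delta|\le 1$, so $\|h_\delta'\|_{L^\infty([0,\infty))}\le c+1$ independently of $\delta$. Since $\rho\in L^2([s,T];H^1(\mathbb{T}^d))$ almost surely and $\|h_\delta'\|_\infty<\infty$, the chain rule for weak derivatives gives $\nabla h_\delta(\rho) = h_\delta'(\rho)\nabla\rho$ a.e. Moreover $h_\delta'(\xi)=0$ for $\xi\le \delta/2$, so the gradient is only supported where $\rho\ge \delta/2$. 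Invoking Lemma \ref{lem-2} to write $\nabla\rho = 2\sqrt{\rho}\,\nabla\sqrt{\rho}$ in the distributional sense, we get pointwise almost everywhere
\begin{equation*}
|\nabla h_\delta(\rho)| \le 2(c+1)\sqrt{\rho}\,|\nabla\sqrt{\rho}|.
\end{equation*}

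Then the Cauchy--Schwarz inequality in space-time, followed by one more Cauchy--Schwarz in $\omega$, yields
\begin{align*}
\mathbb{E}\int_s^T\int_{\mathbb{T}^d}|\nabla h_\delta(\rho)|
&\le 2(c+1)\,\mathbb{E}\Bigl[\Bigl(\int_s^T\!\!\int_{\mathbb{T}^d}\rho\Bigr)^{1/2}\Bigl(\int_s^T\!\!\int_{\mathbb{T}^d}|\nabla\sqrt{\rho}|^2\Bigr)^{1/2}\Bigr] \\
&\le 2(c+1)\bigl((T-s)\|\hat{\rho}\|_{L^1(\mathbb{T}^d)}\bigr)^{1/2}\Bigl(\mathbb{E}\int_s^T\!\!\int_{\mathbb{T}^d}|\nabla\sqrt{\rho}|^2\Bigr)^{1/2},
\end{align*}
where mass conservation \eqref{eq-5.4} controls the first factor and the entropy estimate \eqref{eq-5.5} from Proposition \ref{pro-5.4} controls the second by a constant $c(T,s,d,\hat{\rho},\|V\|_{L^{p^*}([0,T];L^p(\mathbb{T}^d))})$ that is independent of $\delta$. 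Summing the $L^1$ and gradient bounds gives the claim.

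The only subtle point is the uniform-in-$\delta$ bound on $h_\delta'$, which relies crucially on the factor $\xi$ multiplying $\psi_\delta'(\xi)$ in $h_\delta$: without it, $h_\delta'$ would blow up like $1/\delta$. Everything else is a direct application of the already-established a priori estimates, so this is essentially a bookkeeping step preparing for the Aubin--Lions--Simon tightness argument that follows.
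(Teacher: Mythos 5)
Your proof is correct and follows essentially the same route as the paper: the uniform-in-$\delta$ bound $h_\delta'(\xi)\le c\mathbf{1}_{\{\xi\ge\delta/2\}}$, the identity $\nabla\rho=2\sqrt{\rho}\,\nabla\sqrt{\rho}$ from Lemma \ref{lem-2}, and then Cauchy--Schwarz combined with mass conservation \eqref{eq-5.4} and the entropy estimate of Proposition \ref{pro-5.4}. The only difference is the order in which the Cauchy--Schwarz inequalities (space-time versus expectation) are applied, which is immaterial.
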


Moreover, we need the following $W^{\beta,1}([0,T];H^{-l}(\mathbb{T}^d))$-estimate for $h_{\delta}(\rho)$.
\begin{proposition}\label{pro-7.3}
		Suppose that $V$ satisfies Assumption (A1).  Let $\hat{\rho}\in L^{\infty}(\mathbb{T}^d)$ be a nonnegative function.  For any $\gamma>0$ and $n\in\mathbb{N}$, let $\rho^{n,\gamma}$ be a weak solution of (\ref{eq-5.1}) in the sense of Definition \ref{def-5.2} with initial data $\rho^{n,\gamma}(\cdot,0)=\hat{\rho}$. Then, for every $\beta\in(0,1/2)$ and $l>\frac{d}{2}+1$, there exists $c\in(0,\infty)$ depending on $\delta,\beta,T, d,l,\|V\|_{L^{p*}([0,T];L^p(\mathbb{T}^d;\mathbb{R}^d))}$, and $\|\hat{\rho}\|_{L^1(\mathbb{T}^d)}$ such that
	\begin{align}
	\mathbb{E}\left[\|h_{\delta}(\rho^{n,\gamma})\|_{W^{\beta,1}\left([0,T];H^{-l}\left(\mathbb{T}^{d}\right)\right)}\right]\le c(\delta,\beta,,T, d,l,\|V\|_{L^{p*}([0,T];L^p(\mathbb{T}^d;\mathbb{R}^d))},\|\hat{\rho}\|_{L^1(\mathbb{T}^d)}).\notag
	\end{align}
\end{proposition}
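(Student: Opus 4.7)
The plan is to derive an evolution equation for $h_{\delta}(\rho)$ via an It\^o-type formula tested against $\psi\in H^{l}(\mathbb{T}^d)$ with $l>\frac{d}{2}+1$, split the resulting right-hand side into a bounded-variation (drift) part $A$ and a martingale part $M$, and then estimate each of these separately in $L^{1}([s,T];H^{-l}(\mathbb{T}^d))$ and $W^{\beta,1}([s,T];H^{-l}(\mathbb{T}^d))$ respectively. The Sobolev embedding $H^{l}(\mathbb{T}^d)\hookrightarrow C^{1}(\mathbb{T}^d)$ will be used throughout, so that pairings against $\psi$ are controlled by $\|\psi\|_{C^1}\lesssim\|\psi\|_{H^l}$; in particular, the $H^{-l}$-norm of any distribution of the form $\mathrm{div}\,G$ with $G\in L^{1}(\mathbb{T}^d;\mathbb{R}^d)$ is bounded by $\|G\|_{L^1(\mathbb{T}^d)}$.

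First, I would apply It\^o's formula (in weak form, after mollification in $x$ and passing to the limit using the $L^2([s,T];H^1)$ regularity of $\rho$) to $\langle h_{\delta}(\rho(\cdot,t)),\psi\rangle$, which yields
\[
\langle h_{\delta}(\rho(t))-h_{\delta}(\hat{\rho}),\psi\rangle=\int_{s}^{t}\langle A(r),\psi\rangle\mathrm{d}r+\int_{s}^{t}\langle \mathrm{d}M(r),\psi\rangle.
\]
The drift $A$ contains: $-\nabla\cdot(h_{\delta}'(\rho)\nabla\rho)+h_{\delta}''(\rho)|\nabla\rho|^{2}$ from the Laplacian; $-h_{\delta}'(\rho)\nabla\cdot(\rho V\ast\rho)$ from the kernel term; the analogous contributions from the Stratonovich correction involving $F_{1}[\sigma'(\rho)]^{2}$ and $\sigma(\rho)\sigma'(\rho)F_{2}$; and the It\^o correction $\frac{1}{2}h_{\delta}''(\rho)\sigma^{2}(\rho)F_{3}$ from the quadratic variation of the noise. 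The martingale is $\mathrm{d}M(r)=-h_{\delta}'(\rho)\nabla\cdot(\sigma(\rho)\,\mathrm{d}W^{F})$, which I would rewrite as $\nabla\cdot(h_{\delta}'(\rho)\sigma(\rho)\,\mathrm{d}W^{F})-h_{\delta}''(\rho)\sigma(\rho)\nabla\rho\cdot\mathrm{d}W^{F}$ so as to present it against $\nabla\psi$.

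Next I would bound the drift pointwise in $H^{-l}$. Because $h_{\delta}'$ is supported in $\{\xi\geq\delta/2\}$ and satisfies $|h_{\delta}'|+|h_{\delta}''|\leq c(\delta)$, and because $\nabla\rho=2\sqrt{\rho}\nabla\sqrt{\rho}\geq\sqrt{\delta/2}\cdot 2\nabla\sqrt{\rho}$ on $\mathrm{supp}\,h_{\delta}'$, Proposition \ref{pro-5.4} yields $L^{1}(\Omega\times[s,T];L^{1}(\mathbb{T}^d))$ bounds on $h_{\delta}'(\rho)\nabla\rho$ and on $h_{\delta}''(\rho)|\nabla\rho|^{2}$ depending on $\delta$, $T$, $s$, $d$, $\hat{\rho}$ and $\|V\|_{L^{p^*}([0,T];L^p)}$. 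The terms involving $F_{1}[\sigma'(\rho)]^{2}$ and $\sigma(\rho)\sigma'(\rho)F_{2}$ are handled the same way using Assumption (B2) (with the bound $c_{\delta}$), while the kernel contribution is controlled by Lemma \ref{lem-3.4}. Thus $\mathbb{E}\|A\|_{L^{1}([s,T];H^{-l})}\leq c(\delta,T,s,d,l,\hat{\rho},\|V\|_{L^{p^*}([0,T];L^p)})$, and Fubini gives the corresponding bound on $\mathbb{E}\|h_{\delta}(\rho)-\int_{s}^{\cdot}A\|_{W^{1,1}([s,T];H^{-l})}$, which embeds into $W^{\beta,1}([s,T];H^{-l})$ for every $\beta\in(0,1/2)$.

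For the martingale part, using the alternative representation that moves $\nabla$ onto the test function, Burkholder--Davis--Gundy together with $|h_{\delta}'(\rho)\sigma(\rho)|^{2}\mathbf{1}_{\{\rho\geq\delta/2\}}\lesssim c(\delta)$, $|h_{\delta}''(\rho)\sigma(\rho)|^{2}\mathbf{1}_{\{\rho\geq\delta/2\}}\lesssim c(\delta)$ from Assumption (B2), and $\sum_{k}(f_{k}^{2}+|\nabla f_{k}|^{2})\lesssim 1$ gives, for every $s\leq r<t\leq T$,
\[
\mathbb{E}\|M(t)-M(r)\|_{H^{-l}}\leq \bigl(\mathbb{E}\|M(t)-M(r)\|_{H^{-l}}^{2}\bigr)^{1/2}\leq c(\delta)\,|t-r|^{1/2},
\]
where for the term $\nabla\psi$ acts on I bound $\|\nabla\psi\|_{L^{2}}\leq\|\psi\|_{H^{l}}$, and for the term where $\nabla\rho$ appears I use Proposition \ref{pro-5.4} to make the time-integral of $\|\nabla\rho\|_{L^{1}}^{2}$ finite on $\{\rho\geq\delta/2\}$. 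Since $\beta<\frac{1}{2}$, the integral $\int_{s}^{T}\!\int_{s}^{T}|t-r|^{1/2-\beta-1}\mathrm{d}t\mathrm{d}r$ is finite by Tonelli, so $\mathbb{E}\|M\|_{W^{\beta,1}([s,T];H^{-l})}\leq c(\delta,\beta,T,s,d,l,\hat{\rho},\|V\|_{L^{p^*}([0,T];L^p)})$.

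Combining the two parts with Lemma \ref{lem-7.2} for the $L^{1}([s,T];H^{-l})$ piece (via $W^{1,1}\hookrightarrow H^{-l}$) yields the stated bound. The main obstacle is verifying that every occurrence of $\nabla\rho$ and of the singular factors $[\sigma'(\rho)]^{2}$, $\rho^{-1}$-type objects is harmless on the support of $h_{\delta}'$ and $h_{\delta}''$; this is the sole reason the constant is allowed to depend on $\delta$, and it is also the step where one must rigorously justify the It\^o formula for $h_{\delta}(\rho)$ applied to a merely $L^{2}([s,T];H^{1})$-valued weak solution, which I would do by standard mollification in space combined with the uniform bounds of Propositions \ref{pro-5.4} and \ref{pro-6.2}.
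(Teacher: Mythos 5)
Your overall strategy is the same as the paper's: apply the It\^o formula to $h_{\delta}(\rho)$, bound the finite-variation and kernel contributions in $W^{1,1}([s,T];H^{-l}(\mathbb{T}^d))$ using the support and boundedness of $h_{\delta}'$, $h_{\delta}''$, Assumption (B2), conservation of mass and the entropy estimate of Proposition \ref{pro-5.4}, and treat the martingale part in a fractional Sobolev norm before embedding everything into $W^{\beta,1}$. Your shortcut of invoking Lemma \ref{lem-3.4} for the kernel term is legitimate, provided you observe that its constant involves $\|\nabla\sqrt{\rho}\|_{L^2([s,T];L^2(\mathbb{T}^d))}$ raised to the powers $\tfrac{d+p}{p}$ and $\tfrac{d}{p}+1$, both $\le 2$, so the expectation is finite by Proposition \ref{pro-5.4} and Jensen; the paper instead integrates by parts against the test function, but the two routes give the same bound.

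The genuine gap is in the martingale estimate. You claim $\mathbb{E}\|M(t)-M(r)\|_{H^{-l}}\le c(\delta)|t-r|^{1/2}$ with a deterministic constant and then integrate $|t-r|^{1/2-\beta-1}$. This is fine for the piece $\nabla\cdot\big(h_{\delta}'(\rho)\sigma(\rho)\,\mathrm{d}W^F\big)$, whose quadratic-variation density is bounded by $c\,\|\hat{\rho}\|_{L^1(\mathbb{T}^d)}$, but it fails for the piece $\int_r^t h_{\delta}''(\rho)\sigma(\rho)\nabla\rho\cdot \mathrm{d}W^F$: by the It\^o isometry its increment satisfies only
\begin{equation*}
\mathbb{E}\|M(t)-M(r)\|^{2}_{H^{-l}}\lesssim c(\delta)\,\mathbb{E}\int_r^t\big\|\nabla\sqrt{\rho}\,\mathbf{1}_{\{\delta/2\le\rho\le\delta\}}\big\|^{2}_{L^2(\mathbb{T}^d)}\,\mathrm{d}u,
\end{equation*}
and Proposition \ref{pro-5.4} makes this density integrable over $[s,T]$ but not uniformly bounded in time, so no bound proportional to $|t-r|$ with a constant independent of $(r,t)$ follows; indeed your own justification (``the time-integral is finite'') does not deliver the displayed H\"older rate, and the subsequent Tonelli step collapses. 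The repair is exactly the paper's computation (\ref{eq-7.7}): estimate $\mathbb{E}\|M\|^{2}_{W^{\beta,2}([s,T];H^{-l}(\mathbb{T}^d))}$ directly, insert the increment bound of the form $\mathbb{E}\int_r^t g(u)\,\mathrm{d}u$ into the double time integral and use Fubini, which only needs $\mathbb{E}\int_s^T g<\infty$ and $\beta<1/2$, and then conclude via $W^{\beta,2}\hookrightarrow W^{\beta,1}$ (alternatively, interpolate by H\"older with some $\beta<\beta'<1/2$). The remaining slips, the sign of the Laplacian contribution and writing $h_{\delta}(\rho)-\int_s^{\cdot}A$ where you mean the drift part of $h_{\delta}(\rho)$, are cosmetic.
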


\begin{proof}
	Applying It\^{o} formula, for every $t\in[0,T]$, for every $\delta\in(0,1)$, as distribution on $\mathbb{T}^{d}$, we have almost surely that $h_{\delta}(\rho^{n,\gamma}(x,t))=h_{\delta}\left(\hat{\rho}\right)+J_{t}^{\mathrm{f}.\mathrm{v}}+J_{t}^{\mathrm{ker}}+J_{t}^{\mathrm{mart}}$. The terms $J_{t}^{\mathrm{f}.\mathrm{v}}$ and $J_{t}^{\mathrm{mart}}$ correspond respectively to the terms $J_{t}^{\mathrm{f}.\mathrm{v}}$ and $J_{t}^{\mathrm{mart}}$ in \cite[Proposition 5.14]{FGold}. Here we mainly focus on estimating the kernel term $J_{t}^{\mathrm{ker}}$ of the following form.

	\begin{align*}
	J_{t}^{\mathrm{ker}}=&-\int_{0}^{t}h_{\delta}'(\rho^{n,\gamma})\nabla\cdot(\rho^{n,\gamma} V_{\gamma}(r)\ast\rho^{n,\gamma}).
	\end{align*}
By integration by parts formula, we get
	\begin{align}\notag &\mathbb{E}\Big[\|J_{\cdot}^{\mathrm{ker}}\|_{W^{1,1}([0,T];H^{-l}(\mathbb{T}^{d}))}\Big]
=\mathbb{E}\int_{0}^{T}\Big(\|J_{t}^{\mathrm{ker}}\|_{H^{-l}(\mathbb{T}^{d})}
+\|\partial_tJ_{t}^{\mathrm{ker}}\|_{H^{-l}(\mathbb{T}^{d})}\Big)\\ \notag
	\le&(T+1)\mathbb{E}\int_{0}^{T}\sup_{\|\varphi\|_{H^l(\mathbb{T}^d)}=1}\Big|\int_{\mathbb{T}^d}h_{\delta}''(\rho^{n,\gamma})\varphi(x)\nabla\rho^{n,\gamma}\cdot(\rho^{n,\gamma} V_{\gamma}(t)\ast\rho^{n,\gamma})\Big|\\ \notag
	&+(T+1)\mathbb{E}\int_{0}^{T}\sup_{\|\varphi\|_{H^l(\mathbb{T}^d)}=1}\Big|\int_{\mathbb{T}^d}h_{\delta}'(\rho^{n,\gamma})\nabla\varphi(x)\cdot(\rho^{n,\gamma} V_{\gamma}(t)\ast\rho^{n,\gamma})\Big|\\
\label{eq-7.9}
=:&I_1+I_2.
	\end{align}
Note that when $l>\frac{d}{2}+1$, we have $\|f\|_{L^{\infty}(\mathbb{T}^d)}\le c\|f\|_{H^l(\mathbb{T}^d)}$ and $\|\nabla f\|_{L^{\infty}(\mathbb{T}^d)}\le c\|f\|_{H^l(\mathbb{T}^d)}$. By H\"{o}lder's and convolutional Young's inequalities, Lemma \ref{lem-2}, \ref{kk-66} \eqref{eq-7.2}, and Proposition \ref{pro-5.4}, we have
	\begin{align}\notag
	I_1\le&c(\delta,l)\mathbb{E}\left[\int_{0}^{T}\|\nabla\sqrt{\rho^{n,\gamma}}\cdot V_{\gamma}(t)\ast\rho^{n,\gamma}\|_{L^1(\mathbb{T}^d)}\right]\\ \notag
\le&c(\delta,l)\|\hat{\rho}\|_{L^1(\mathbb{T}^d)}\Big(\int_{0}^{T}\|V_{\gamma}(t)\|^2_{L^2(\mathbb{T}^d)}\Big)^{\frac12}\Big(\mathbb{E}\int_{0}^{T}\|\nabla\sqrt{\rho^{n,\gamma}}\|^{2}_{L^{2}(\mathbb{T}^{d})}\Big)^{\frac12}\\
\label{eq-7.10}
\le & c\left(\delta,l,T, d,\|\hat{\rho}\|_{L^{1}(\mathbb{T}^{d})},\|V\|_{L^{p*}([0,T];L^p(\mathbb{T}^d;\mathbb{R}^d))}\right).
	\end{align}
  Applying H\"{o}lder's inequality to $\frac{1}{p}+\frac{1}{q}=1$, by using convolutional Young's inequality, \eqref{eq-7.1}, Lemma \ref{kk-66}, Proposition \ref{pro-5.4} and Gagliardo-Nirenberg interpolation inequality, there exists a constant $c\in(0,\infty)$ depending on $l$ such that
	\begin{align}\notag
	I_2
\le & c(l)\|\hat{\rho}\|_{L^1(\mathbb{T}^d)}\mathbb{E}\int_{0}^{T}\|\rho^{n,\gamma}\|_{L^q(\mathbb{T}^d)} \|V_{\gamma}(t)\|_{L^p(\mathbb{T}^d)}\\ \notag
\le & c(l,d)\|\hat{\rho}\|^{2-\frac{d}{2p}}_{L^1(\mathbb{T}^d)}\mathbb{E}\int_{0}^{T}\|V_{\gamma}(t)\|_{L^p(\mathbb{T}^d)}\|\nabla\sqrt{\rho^{n,\gamma}}\|^{\frac{d}{p}}_{L^{2}(\mathbb{T}^{d})}\notag\\
\label{eq-7.11}
	\le&c\left(l,T, d,\|\hat{\rho}\|_{L^{1}(\mathbb{T}^{d})},\|V\|_{L^{p*}([0,T];L^p(\mathbb{T}^d;\mathbb{R}^d))}\right).
	\end{align}
	Based on \eqref{eq-7.9}-\eqref{eq-7.11}, we conclude that there exists a constant $c\in(0,\infty)$ such that
    \begin{align}\label{eq-7.12}
    \mathbb{E}\left[\|J_{\cdot}^{\mathrm{ker}}\|_{W^{1,1}([0,T];H^{-l}(\mathbb{T}^{d}))}\right]\le c\left(\delta,l,T, d,\|\hat{\rho}\|_{L^{1}(\mathbb{T}^{d})},\|V\|_{L^{p*}([0,T];L^p(\mathbb{T}^d;\mathbb{R}^d))}\right).
    \end{align}
	By the embeddings $W^{\beta,2}(\mathbb{T}^d), W^{1,1}(\mathbb{T}^d)\hookrightarrow W^{\beta,1}(\mathbb{T}^d)$ for every $\beta\in(0,\frac12)$, we complete the proof.
\end{proof}

{\color{black}With the help of Lemma \ref{lem-7.2}, Proposition \ref{pro-7.3} and the Aubin-Lions-Simon lemma, we can establish the tightness of the laws of $h_\delta(\rho^{n,\gamma})$ on $L^{1}\left([0,T];L^{1}\left(\mathbb{T}^{d}\right)\right)$ in the strong topology. Then, following the approach used in \cite{FG21}, we obtain the following result.}

\begin{proposition}\label{pro-7.8}
Suppose that $V$ satisfies Assumption (A1). Let $\hat{\rho}\in L^{\infty}(\mathbb{T}^d)$ be a nonnegative function. For any $\gamma>0$ and $n\in\mathbb{N}$, let $\rho^{n,\gamma}$ be the renormalized kinetic solution of (\ref{eq-5.1}) with initial data $\rho^{n,\gamma}(\cdot,0)=\hat{\rho}$, then the laws of $\left\{\rho^{n,\gamma}\right\}_{n\in\mathbb{N},\gamma\in(0,1)}$ are tight on $L^{1}\left([0,T];L^{1}(\mathbb{T}^{d}\right))$ in the strong norm topology.
\end{proposition}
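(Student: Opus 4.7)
The plan is to exploit the equivalence between the strong $L^1([s,T];L^1(\mathbb{T}^d))$ topology and the metric topology induced by $D$ (Lemma \ref{lem-7.7}), and to establish tightness of the laws of $\{\rho^{n,\gamma}\}$ by first establishing tightness for each truncated family $\{h_{1/k}(\rho^{n,\gamma})\}$ and then reassembling. The starting point is that the constants furnished by Lemma \ref{lem-7.2} and Proposition \ref{pro-7.3} depend on $V$ only through $\|V\|_{L^{p^*}([0,T];L^p(\mathbb{T}^d))}$, which by \eqref{kk-65} dominates $\|V_\gamma\|_{L^{p^*}([0,T];L^p(\mathbb{T}^d))}$ uniformly in $\gamma$. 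Therefore, for every fixed $k\in\mathbb{N}$, $\beta\in(0,1/2)$ and $l>d/2+1$,
\begin{equation*}
\sup_{n,\gamma}\mathbb{E}\Big[\|h_{1/k}(\rho^{n,\gamma})\|_{L^1([s,T];W^{1,1}(\mathbb{T}^d))}+\|h_{1/k}(\rho^{n,\gamma})\|_{W^{\beta,1}([s,T];H^{-l}(\mathbb{T}^d))}\Big]\le C_k.
\end{equation*}

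Next, I would invoke the Aubin–Lions–Simon compact embedding of $L^1([s,T];W^{1,1}(\mathbb{T}^d))\cap W^{\beta,1}([s,T];H^{-l}(\mathbb{T}^d))$ into $L^1([s,T];L^1(\mathbb{T}^d))$ together with Chebyshev's inequality, so that for arbitrary $\epsilon>0$ and every $k$ there exists a set $A_k^\epsilon\subset L^1([s,T];L^1(\mathbb{T}^d))$, relatively compact in the strong norm, with $\sup_{n,\gamma}\mathbb{P}(h_{1/k}(\rho^{n,\gamma})\notin A_k^\epsilon)\le \epsilon 2^{-k-1}$. Defining
\begin{equation*}
K_\epsilon:=\{\rho\in L^1([s,T];L^1(\mathbb{T}^d)):\, h_{1/k}(\rho)\in \overline{A_k^\epsilon}\text{ for every }k\in\mathbb{N}\},
\end{equation*}
the continuity of the Nemytskii map $f\mapsto h_{1/k}(f)$ on $L^1$ (a consequence of the uniform Lipschitz bound on $h_{1/k}$) renders $K_\epsilon$ closed, and a union bound gives $\inf_{n,\gamma}\mathbb{P}(\rho^{n,\gamma}\in K_\epsilon)\ge 1-\epsilon/2$.

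The main obstacle is to verify that $K_\epsilon$ is actually (sequentially) compact in $L^1([s,T];L^1(\mathbb{T}^d))$, i.e.\ that the level-by-level compactness assembles into genuine compactness. The decisive quantitative bound is
\begin{equation*}
0\le \xi-h_{1/k}(\xi)=\xi\bigl(1-\psi_{1/k}(\xi)\bigr)\le \tfrac{1}{k},\qquad \xi\ge 0,
\end{equation*}
which holds because $\psi_{1/k}\equiv 1$ on $[1/k,\infty)$ and $\psi_{1/k}\in[0,1]$. Given a sequence $\{\rho_j\}\subset K_\epsilon$, compactness of each $\overline{A_k^\epsilon}$ permits a diagonal extraction along which $h_{1/k}(\rho_j)$ converges in $L^1([s,T];L^1(\mathbb{T}^d))$ for every $k$; the inequality above yields
\begin{equation*}
\|\rho_j-h_{1/k}(\rho_j)\|_{L^1([s,T];L^1(\mathbb{T}^d))}\le \frac{(T-s)|\mathbb{T}^d|}{k}
\end{equation*}
uniformly in $j$, so a standard $\epsilon/3$ argument (choose $k$ so that $(T-s)|\mathbb{T}^d|/k<\epsilon/3$, then use the Cauchy property of $h_{1/k}(\rho_j)$) shows $\{\rho_j\}$ is Cauchy in $L^1([s,T];L^1(\mathbb{T}^d))$, hence convergent. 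This establishes the sought compactness and concludes the proof of tightness.
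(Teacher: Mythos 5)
Your argument is correct and is essentially the route the paper takes: the paper's proof simply invokes Lemma \ref{lem-7.2}, Proposition \ref{pro-7.3} and the argument of Proposition 5.26 in \cite{FG21}, which is exactly your scheme of uniform (in $n,\gamma$, via \eqref{kk-65} and the $n$-uniformity of Assumption (B2)) estimates for $h_{1/k}(\rho^{n,\gamma})$, Aubin--Lions--Simon compactness plus Chebyshev at each truncation level, and reassembly using the bound $0\le \xi-h_{1/k}(\xi)\le 1/k$. Your direct verification that $K_\epsilon$ is compact in the strong $L^1$ norm in effect re-derives the content of Lemma \ref{lem-7.7}, so nothing is missing.
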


\begin{proposition}\label{pro-7.9}
Suppose that $V$ satisfies Assumption (A1). Let $\hat{\rho}\in L^{\infty}(\mathbb{T}^d)$ be a nonnegative function.
For any $\gamma>0$, $n\in\mathbb{N}$ and $\psi\in\mathrm{C}_{c}^{\infty}\left(\mathbb{T}^{d}\times(0,\infty)\right)$, let $\rho^{n,\gamma}$ be the renormalized kinetic solution of (\ref{eq-5.1})  with initial data $\rho^{n,\gamma}(\cdot,0)=\hat{\rho}$. Let
 	\begin{align}\label{kk-63}	M_{t}^{n,\gamma,\psi}:=\int_{0}^{t}\int_{\mathbb{T}^{d}}\psi\left(x,\rho^{n,\gamma}\right)\nabla\cdot\left(\sigma_{n}\left(\rho^{n,\gamma}\right)\mathrm{d}W^F\right).
	\end{align}
 Then for every $\beta\in(0,1 /2)$, the laws of the martingales $\{M^{n,\gamma,\psi}\}_{n\in\mathbb{N},\gamma\in(0,1)}$ are tight on $\mathrm{C}^{\beta}([0,T];\mathbb{R})$. \end{proposition}

\section{Existence of renormalized kinetic solutions to Dean-Kawasaki equation}\label{sec-6}
In this section, we aim to prove the existence of renormalized kinetic solutions (strong in the probabilistic sense) to the Dean-Kawasaki equation
\begin{align}\label{eq-8.1}
\mathrm{d}\rho=\Delta\rho \mathrm{d}t-\nabla\cdot(\rho V(t)\ast\rho)\mathrm{d}t-\nabla\cdot(\sqrt\rho \mathrm{d}W^F(t))+\frac{1}{8}\nabla\cdot(F_1\rho^{-1}\nabla\rho+2F_2)\mathrm{d}t.
\end{align}
Our proof relies on pathwise uniqueness and the  following method from Gy\"{o}ngy and Krylov \cite[Lemma 1.1]{gyongy1996existence}, which is very close to the celebrate result of Yamada and Watanabe. 
		\begin{lemma}\label{lem-8.1}
			Let $(\Omega,\mathcal{F},\mathbb{P})$ be a probability space and $\left\{X_{n}:\Omega\to\bar{X}\right\}_{n\geq 1}$ be a sequence of random variables, where $\bar{X}$ is a complete separable metric space. Then,
			$X_{n}$ converges in probability as $n\to\infty$, if and only if for any sequences $\left\{\left(n_{k},m_{k}\right)\right\}_{k=1}^{\infty}$ satisfying $n_{k}, m_{k}\to\infty$ as $k\to\infty$, there exists a further subsequence $\left\{\left(n_{k^{\prime}},m_{k^{\prime}}\right)\right\}_{k^{\prime}=1}^{\infty}$ fulfilling $n_{k^{\prime}},m_{k^{\prime}}\to\infty$ as $k^{\prime}\to\infty$ such that the joint laws of $\left(X_{n_{k^{\prime}}}, X_{m_{k^{\prime}}}\right)_{k^{\prime} \in \mathbb{N}}$ converge weakly to a probability measure $\mu$ on $\bar{X} \times \bar{X}$ satisfying $\mu(\{(x, y)\in\bar{X}\times\bar{X}: x=y\})=1$, as $k^{\prime}\to\infty$.
		\end{lemma}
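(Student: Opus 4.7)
The plan is to establish both directions separately, with the nontrivial content concentrated in the sufficiency direction. For necessity, I would start by assuming $X_n \to X$ in probability in $\bar{X}$, observe that any subsequence $X_{n_k}$ and $X_{m_k}$ also converges in probability to the same $X$, and conclude that the joint $(X_{n_k}, X_{m_k})$ converges in probability, hence in law, to $(X, X)$ on $\bar{X} \times \bar{X}$. Since the law of $(X,X)$ assigns full measure to the diagonal $\Delta := \{(x,y)\in\bar{X}\times\bar{X}: x=y\}$ (and this set is closed, hence measurable), the required property holds with $\mu = \mathrm{Law}(X,X)$, and in fact we do not even need to extract a further subsequence.

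For sufficiency, I would argue by contradiction. Suppose $X_n$ does not converge in probability. Then there exists some limit-free situation: either the sequence has no candidate limit, or it fails Cauchy-in-probability. A clean way to capture this is to note that convergence in probability of $\bar{X}$-valued random variables is equivalent to the sequence being Cauchy in probability, i.e.\ $\mathbb{P}(d(X_n,X_m) > \varepsilon) \to 0$ as $n,m\to\infty$ for every $\varepsilon > 0$. If this fails, there exist $\varepsilon_0, \delta_0 > 0$ and sequences $n_k, m_k \to \infty$ with
\begin{equation*}
\mathbb{P}\big(d(X_{n_k}, X_{m_k}) > \varepsilon_0\big) \geq \delta_0 \quad \text{for all } k.
\end{equation*}
Applying the hypothesis to these $(n_k, m_k)$, we obtain a further subsequence along which the joint laws of $(X_{n_{k'}}, X_{m_{k'}})$ converge weakly to some probability measure $\mu$ with $\mu(\Delta) = 1$.

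The key step is then to test the weak convergence against the bounded continuous function $f(x,y) := \min(d(x,y), 1)$ on $\bar{X}\times\bar{X}$. On the one hand, $f \equiv 0$ on $\Delta$, so $\int f\,d\mu = 0$. On the other hand, for each $k'$,
\begin{equation*}
\mathbb{E}\big[f(X_{n_{k'}}, X_{m_{k'}})\big] \geq \min(\varepsilon_0, 1)\cdot \mathbb{P}\big(d(X_{n_{k'}}, X_{m_{k'}}) > \varepsilon_0\big) \geq \min(\varepsilon_0, 1)\cdot \delta_0 > 0,
\end{equation*}
and passing to the limit via weak convergence yields $\int f\,d\mu \geq \min(\varepsilon_0,1)\cdot\delta_0 > 0$, contradicting $\int f\,d\mu = 0$. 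Hence $\{X_n\}$ is Cauchy in probability, and by completeness of $\bar{X}$ it converges in probability to some $\bar{X}$-valued random variable $X$.

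The main obstacle I anticipate is purely conceptual rather than computational: one must choose the right bounded continuous test function to detect mass off the diagonal while remaining integrable against the weak limit. The truncated metric $\min(d(\cdot,\cdot),1)$ works because it is continuous, globally bounded (so weak convergence applies without tightness concerns on the integrand), and strictly positive off $\Delta$. A secondary subtlety is the appeal to completeness and separability of $\bar{X}$: separability is needed so that the diagonal is a measurable set (and so that weak convergence on $\bar{X}\times\bar{X}$ behaves well), while completeness is used at the end to upgrade ``Cauchy in probability'' to ``convergent in probability.'' Both are built into the hypotheses.
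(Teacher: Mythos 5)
Your proof is correct, and since the paper gives no proof of this lemma (it simply cites Lemma 1.1 of Gy\"ongy--Krylov \cite{GK96}), the natural comparison is with the standard argument there, which yours essentially reproduces: necessity via convergence in law of the pair to $(X,X)$, and sufficiency by showing the sequence is Cauchy in probability, detecting mass off the diagonal in the weak limit (your truncated-metric test function $\min(d,1)$ is just a cosmetic variant of applying the Portmanteau theorem to the closed set $\{d\ge\varepsilon_0\}$), and then using completeness to pass from Cauchy in probability to convergence in probability. No gaps.
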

		\begin{remark}\label{remark-6.2}
In the above lemma, the state space $\bar{X}$ is  required to be a Polish space. In fact, according to  \cite[Theorem 1.1]{holden2022global}, this condition can be relaxed to a Hilbert space endowed with the weak topology. We mention that the weak conditional version will be utilized to establish the existence of probabilistically strong solution during the proof of the following Theorem \ref{the-existence}.
		\end{remark}


	

\begin{theorem}\label{the-existence}
	Let $\hat{\rho}\in\text{{\rm{Ent}}}\left(\mathbb{T}^{d}\right)$. We have the following two results.
	
	\item{(i)} Suppose that Assumption (A1) holds. Then there exists a stochastic basis $(\tilde{\Omega},\tilde{\mathcal{F}},\{\tilde{\mathcal{F}}(t)\}_{t\in[0,T]},\tilde{\mathbb{P}})$, {\color{black} a trace-class Brownian motion $\tilde{W}^{F}$ on  $L^2(\mathbb{T}^{d})$,} and a process $\tilde{\rho}\in L^1(\tilde{\Omega};L^1([0,T]\times \mathbb{T}^d))$, which is a renormalized kinetic solution of \eqref{eq-8.1} in the sense of Definition \ref{def-2.4} with the equation (\ref{eq-2.6}) holds for almost every $t\in [0,T]$ and $\tilde{\rho}(\cdot,0)=\hat{\rho}$.
	\item{(ii)} Suppose that Assumptions (A1) and (A2) holds. Then there exists a probabilistically strong renormalized kinetic solution $\rho$ of \eqref{eq-8.1} in the sense of Definition \ref{def-2.4} with initial data $\rho(\cdot,0)=\hat{\rho}$.
\end{theorem}
\begin{proof}
The proof mainly refers to the method of \cite[Theorem 5.25]{FG21}, but it should be noted that our kinetic measure of the approximation equation is not a finite measure on $[0,T] \times \mathbb{T}^d \times \mathbb{R}$. This is weaker than the situation in \cite{FG21}. In addition, some efforts are required to pass to the limit of the extra kernel term. We start with the proof of (i), which involves the following six steps.


		
		\noindent \textbf{The proof of (i).}\	$\mathbf{Step\ 1.\ Jakubowski-Skorokhod \ representation\ theorem.}$ Let $l>\frac{d}{2}+1$ be some integer and fix a countable sequence $\left\{\psi_{j}\right\}_{j\in\mathbb{N}}$ which is dense in $\mathrm{C}_{c}^{\infty}\left(\mathbb{T}^{d} \times(0, \infty)\right)$ in the strong $H^{l}\left(\mathbb{T}^{d} \times(0, \infty)\right)$-topology. Using the tightness estimates from Proposition \ref{pro-7.8} and Proposition \ref{pro-7.9}, along with Jakubowski-Skorokhod representation theorem \cite{jakubowski1997almost}, there exists a probability space $(\tilde{\Omega},\tilde{\mathcal{F}},\tilde{\mathbb{P}})$ and {\color{black}a nonnegative process $\tilde{\rho}^{k}\in L^{1}\Big(\tilde{\Omega};L^{1}([0,T]\times\mathbb{T}^{d})\Big)$, $\nabla\sqrt{\tilde{\rho}^{k}}\in L^{2}\Big(\tilde{\Omega};L^{2}([0,T]\times\mathbb{T}^{d};\mathbb{R}^{d})\Big)$,
		 $(\tilde{M}^{k,\psi_{j}})_{j\in\mathbb{N}}\in L^{2}(\tilde{\Omega};\mathrm{C}([0,T])^{\mathbb{N}})$, and a kinetic measure $\tilde{q}^k$,} such that for any $k\in\mathbb{N}$,
	\begin{align}\notag	
{\color{black}\tilde{M}_t^{k,\psi_{j}}}
=&-\left.{\color{black}\int_0^{\infty}}\int_{\mathbb{T}^{d}}\tilde{\chi}^{k}(x,\xi,r)\psi_j(x,\xi)\right|_{r=0}^{r=t} -\int_{0}^{t}\int_{\mathbb{T}^{d}}\nabla\tilde{\rho}^{k}\cdot(\nabla\psi_j)\left(x,\tilde{\rho}^{k}\right)\\
\notag		
    &-\frac{1}{2}\int_{0}^{t}\int_{\mathbb{T}^{d}}F_1\left[\sigma_{n_k}^{\prime}\left(\tilde{\rho}^{k}\right)\right]^{2}\nabla\tilde{\rho}^{k}\cdot(\nabla\psi_j)\left(x,\tilde{\rho}^{k}\right)
	-\frac{1}{2}\int_{0}^{t}\int_{\mathbb{T}^{d}}\sigma_{n_k}\left(\tilde{\rho}^{k}\right)\sigma_{n_k}^{\prime}\left(\tilde{\rho}^{k}\right)F_{2}\cdot(\nabla\psi_j)\left(x,\tilde{\rho}^{k}\right)
\\ \notag			 &+\frac{1}{2}\int_{0}^{t}\int_{\mathbb{T}^{d}}\left(\partial_{\xi}\psi_j\right)\left(x,\tilde{\rho}^{k}\right)\sigma_{n_k}\left(\tilde{\rho}^{k}\right)\sigma'_{n_k}\left(\tilde{\rho}^{k}\right)\nabla\tilde{\rho}^{k}\cdot F_{2}+\frac{1}{2}\int_{0}^{t}\int_{\mathbb{T}^{d}}F_{3}\sigma_{n_k}^{2}\left(\tilde{\rho}^{k}\right)\left(\partial_{\xi}\psi_j\right)\left(x,\tilde{\rho}^{k}\right)\\
	\label{kk-68} &-\int_{0}^{t}{\color{black}\int_0^{\infty}}\int_{\mathbb{T}^{d}}\partial_{\xi}\psi_j(x,\xi)\mathrm{d}\tilde{q}^{k}
	-\int_{0}^{t}\int_{\mathbb{T}^{d}}\psi_j(x,\tilde{\rho}^{k})\nabla\cdot(\tilde{\rho}^{k}V_{\gamma_k}(r)\ast\tilde{\rho}^{k}).
	\end{align}
where
{\black
	\begin{align}\label{eq-8.4}	\tilde{q}^k(\tilde{\omega})\geq\delta_0(\xi-\tilde{\rho}^k(\tilde{\omega}))|\nabla\tilde{\rho}^k(\tilde{\omega})|^2 \ \ \text{for every }\tilde{\omega}\in\tilde{\Omega}.
\end{align}
}
 Moreover, there exists $\tilde{\rho}\in L^1(\tilde{\Omega};L^1([0,T]\times\mathbb{T}^d))$, $\nabla\sqrt{\tilde{\rho}}\in L^{2}(\tilde{\Omega};L^{2}([0,T]\times\mathbb{T}^{d};\mathbb{R}^{d}))$
	and $(\tilde{M}^{\psi_{j}})_{j\in\mathbb{N}}\in L^{1}(\tilde{\Omega};\mathrm{C}([0,T])^{\mathbb{N}})$ such that
	as $k\to\infty$,
	\begin{align}\label{eq-8.19}
		\tilde{\rho}^{k}\to \tilde{\rho} \quad {\rm{strongly\ in\ }} L^1([0,T];L^1(\mathbb{T}^d)),\ \tilde{\mathbb{P}}-a.s.,
	\end{align}
	\begin{align}\label{kk-67}
		\nabla\sqrt{\tilde{\rho}^k}\rightharpoonup \nabla\sqrt{\tilde{\rho}} \quad {\rm{weakly\ in\ }} L^{2}(\tilde{\Omega};L^{2}([0,T]\times\mathbb{T}^{d};\mathbb{R}^{d})),
	\end{align}
	and
	\begin{align}\label{kk-70}
		{\color{black}\tilde{M}_t^{k,\psi_{j}}}\rightarrow \tilde{M}^{\psi_{j}}_t,\ \tilde{\mathbb{P}}-\text{almost surely for any}\ t\in [0,T], j\in \mathbb{N},
	\end{align}
	where $\tilde{\mathbb{P}}$-almost surely for every $j\in\mathbb{N}$ and $t\in[0,T]$,
	\begin{align}\label{kk-69}
		\tilde{M}^{\psi_{j}}_t=\int^t_0\int_{\mathbb{T}^d}\psi_j(x,\tilde{\rho})\nabla\cdot(\sqrt{\tilde{\rho}}\mathrm{d}\tilde{W}^F)
	\end{align}
	and $\tilde{W}^F$ is defined analogously to \eqref{kk-43} by the Brownian motion $\tilde{\beta}$ on $\tilde{\Omega}$. For the proof of the result outlined above, we refer readers to the proof of \cite[Theorem 5.25]{FG21}, which will not be repeated here.

\noindent	$\mathbf{Step\ 2.\ Existence\ of\ a\ limiting\ kinetic\ measure.}$
	For any $M>0$, set
	\begin{align*}
	\theta_M(\xi):=\mathbf{1}_{[0,M]}(\xi),\quad \Theta_M(\xi):=\int^{\xi}_{0}\int^{r}_{0}\theta_M(s)\mathrm{d}s\mathrm{d}r.
	\end{align*}
With the aid of an approximation argument, we first apply It\^{o} formula to $\Theta_M(\rho^{n_k,\gamma_{k}}(x,T))$. Then, according to Definition \ref{def-6.3}, the definition of $q^{n_k,\gamma_k}$, and the properties that $\tilde{q}^k$ has the same law as $q^{n_k,\gamma_k}$ on $\mathcal{M}\left(\mathbb{T}^{d}\times\mathbb{R}\times[0,T]\right)$ and  $\rho^{n_k,\gamma_k}$ has the same law as $\tilde{\rho}^k$,  it follows almost surely that for every  $M>0$ and $k\in\mathbb{N}$,
	\begin{align}\label{eq-8.06}
	\tilde{\mathbb{E}}\Big|\int^T_0{\color{black}\int_0^{\infty}}\int_{\mathbb{T}^d}\theta_M(\xi)
\mathrm{d}\tilde{q}^k\Big|^2
\lesssim& \tilde{\mathbb{E}}\Big|\int_{\mathbb{T}^d}
\Theta_M(\hat{\rho}^{n_k})\Big|^2+\tilde{\mathbb{E}}\Big|\int_{\mathbb{T}^d}\Theta_M(\tilde{\rho}^k(x,T))\Big|^2\notag\\
&+\tilde{\mathbb{E}}\Big|\int^T_0\int_{\mathbb{T}^d}\Theta'_M(\tilde{\rho}^k)\nabla\cdot
\Big(\tilde{\rho}^kV_{\gamma_{k}}\ast\tilde{\rho}^k\Big)\Big|^2\notag\\
&+\tilde{\mathbb{E}}\Big|\int^T_0\int_{\mathbb{T}^d}\theta_M(\tilde{\rho}^k)\sigma_{n_k}(\tilde{\rho}^k)\nabla\tilde{\rho}^k\cdot\mathrm{d}\tilde{W}^F\Big|^2\notag\\
&+\tilde{\mathbb{E}}\Big|\int^T_0\int_{\mathbb{T}^d}\theta_M(\tilde{\rho}^k)\Big(\sigma_{n_k}(\tilde{\rho}^k)\sigma'_{n_k}(\tilde{\rho}^k)\nabla\tilde{\rho}^k\cdot F_2+F_3\sigma^2_{n_k}(\tilde{\rho}^k)\Big)\Big|^2.
	\end{align}
	 According to the property $\Theta_M(\xi)\leq M(M+|\xi|)$, it follows that
	\begin{align*}
	\tilde{\mathbb{E}}\Big|\int_{\mathbb{T}^d}\Theta_M(\hat{\rho}^{n_k})\Big|^2
+\tilde{\mathbb{E}}\Big|\int_{\mathbb{T}^d}\Theta_M(\tilde{\rho}^k(x,T))\Big|^2
\le C(M)\|\hat{\rho}\|^2_{L^1(\mathbb{T}^d)}.
	\end{align*}
	For the martingale term on the righthand side of \eqref{eq-8.06}, by  It\^{o} isometry,  Assumption (A1), \eqref{kk-5.2} and Proposition \ref{pro-5.4}, we deduce that there exists a  constant $c\in(0,\infty)$ independent of $k$ such that
	\begin{align*} \tilde{\mathbb{E}}\Big|\int^T_0\int_{\mathbb{T}^d}\theta_M(\tilde{\rho}^k)\sigma_{n_k}(\tilde{\rho}^k)
\nabla\tilde{\rho}^k\cdot\mathrm{d}\tilde{W}^F\Big|^2
\le&\tilde{\mathbb{E}}\int^T_0\int_{\mathbb{T}^d}F_1(x)\mathbf{1}_{\{0\le\tilde{\rho}^k\le M\}}|\sigma_{n_k}(\tilde{\rho}^k)|^2|\nabla\tilde{\rho}^k|^2\\
	\le& c(M,T, d,\|\hat{\rho}\|_{L^1(\mathbb{T}^d)},\|V\|_{L^{p*}([0,T];L^p(\mathbb{T}^d;\mathbb{R}^d))}).
	\end{align*}
Note that
	\begin{align*}
	\mathbf{1}_{\{0\le\tilde{\rho}^k\le M\}}\sigma_{n_k}(\tilde{\rho}^k)\sigma'_{n_k}(\tilde{\rho}^k)\nabla\tilde{\rho}^k
=\frac12\nabla(\sigma^2_{n_k}(\tilde{\rho}^k\wedge M)),
	\end{align*}
 by integration by parts formula, we deduce that
	\begin{align*}
	\int^T_0\int_{\mathbb{T}^d}\theta_M(\tilde{\rho}^k)\sigma_{n_k}(\tilde{\rho}^k)\sigma'_{n_k}(\tilde{\rho}^k)\nabla\tilde{\rho}^k\cdot F_2=-\frac12\int^T_0\int_{\mathbb{T}^d}\sigma^2_{n_k}(\tilde{\rho}^k\wedge M)\nabla\cdot F_2=0.
	\end{align*}
	Moreover, there exists a constant $c\in(0,\infty)$ independent of $k$ such that
	\begin{align*}
	\int^T_0\int_{\mathbb{T}^d}\theta_M(\tilde{\rho}^k)F_3\sigma^2_{n_k}(\tilde{\rho}^k)\le c\int^T_0\int_{\mathbb{T}^d}\mathbf{1}_{\{0\le\tilde{\rho}^k\le M\}}F_3\tilde{\rho}^k\le c(M,T ).
	\end{align*}
	It remains to make estimate of the kernel term. By integration by parts formula,
H\"{o}lder's  and convolutional Young's inequalities, and Proposition \ref{pro-5.4}, we deduce that there exists a constant $c\in(0,\infty)$ independent of $k$ such that
	\begin{align*}
		&\tilde{\mathbb{E}}\Big|\int^T_0\int_{\mathbb{T}^d}\Theta'_M(\tilde{\rho}^k)\nabla\cdot
(\tilde{\rho}^kV_{\gamma_{k}}\ast\tilde{\rho}^k)\Big|^2
=\tilde{\mathbb{E}}\Big|\int^T_0\int_{\mathbb{T}^d}\mathbf{1}_{\{0\le\tilde{\rho}^k\le M\}}\nabla\tilde{\rho}^k\cdot(\tilde{\rho}^kV_{\gamma_{k}}\ast\tilde{\rho}^k)\Big|^2\\
		\le&4M^3\tilde{\mathbb{E}}\Big(\int^T_0\|\nabla\sqrt{\tilde{\rho}^k}\cdot V_{\gamma_{k}}\ast\tilde{\rho}^k\|_{L^1(\mathbb{T}^d)}\Big)^2\le4M^3\|\hat{\rho}\|^2_{L^1(\mathbb{T}^d)}\|V\|^2_{L^2([0,T];L^2(\mathbb{T}^d;\mathbb{R}^d))}\tilde{\mathbb{E}}\int^T_0\|\nabla\sqrt{\tilde{\rho}^k}\|^2_{L^2(\mathbb{T}^d)}\\
		\le&c(M,T, d,\|\hat{\rho}\|_{L^1(\mathbb{T}^d)},\|V\|_{L^{p*}([0,T];L^p(\mathbb{T}^d;\mathbb{R}^d))}).
	\end{align*}
	Combining all the above estimates, we conclude that there exists a constant $\Lambda$ such that
	\begin{align}\label{eq-8.266}
	&\sup_{k\geq1}\tilde{\mathbb{E}}(\tilde{q}^k([0,T]\times\mathbb{T}^d\times[0,M]))^2\le\Lambda(M,T, d,\|\hat{\rho}\|_{L^1(\mathbb{T}^d)},\|V\|_{L^{p*}([0,T];L^p(\mathbb{T}^d;\mathbb{R}^d))}).
	\end{align}
	
	For every $r\in \mathbb{N}$, define $K_r:=\mathbb{T}^d\times [0,T]\times [0,r]$. Let $\mathcal{M}_r$ be the space of bounded Borel measures over $K_r$ (with norm given by the total variation of measures). Clearly, $\mathcal{M}_r$ is the topological dual of $C(K_r)$, which is the set of continuous functions on $K_r$.
	By \eqref{eq-8.266}, the sequence $\{\tilde{q}^{k}=\delta_0(\cdot-\tilde{\rho}^k)|\nabla\tilde{\rho}^k|^2\}_{k\geq1}$ is uniformly bounded in $L^2(\tilde{\Omega}; \mathcal{M}_r)$. By the Banach-Alaoglu theorem, there exists $\tilde{q}_r\in L^2(\tilde{\Omega}; \mathcal{M}_r)$ and
	a subsequence still denoted by $\{\tilde{q}^{k}\}_{k\in \mathbb{N}}$ such that $\tilde{q}^{k}\rightharpoonup\tilde{q}_r $ in $L^2(\tilde{\Omega}; \mathcal{M}_r)-$weak star as $k\to\infty$. By a diagonal process, we extract a subsequence (not relabeled) and a Radon measure $\tilde{q}$  on $\mathbb{T}^d\times [0,T]\times [0,\infty)$ such that $\tilde{q}^{k}\rightharpoonup\tilde{q}-$weak
	star  in $L^2(\tilde{\Omega}; \mathcal{M}_r)$ as $k\to\infty$ for every $r\in \mathbb{N}$.
	
    The limiting measure $\tilde{q}$ is a kinetic measure
in the sense of	Definition \ref{def-2.2}, since the predictable property is stable with respect to weak limits. In addition, we claim that $\tilde{q}$ fulfills (\ref{eq-2.66}). That is,
	\begin{align}\label{eq-8.21}
	\tilde{q}(t,x,\xi)\geq 4\delta_0(\xi-\tilde{\rho}) \tilde{\rho}|\nabla\sqrt{\tilde{\rho}}|^2\quad \tilde{\mathbb{P}}-a.s.
	\end{align}
%
Indeed,
we can choose a subsequence (still denoted by $\tilde{\rho}^k$) such that for every $A\in\tilde{\mathcal{F}}$ and for all nonnegative $\phi\in \mathrm{C}_{c}^{\infty}\left(\mathbb{T}^{d}\times[0,T]\times(0,\infty)\right)$,
	\begin{align*}
	\nabla\sqrt{\tilde{\rho}^k}\sqrt{\phi(x,t,\tilde{\rho}^k)\tilde{\rho}^k}I_A\rightharpoonup\nabla\sqrt{\tilde{\rho}}\sqrt{\phi(x,t,\tilde{\rho})\tilde{\rho}}I_A,
	\end{align*}
	weakly in $L^2(\tilde{\Omega}\times[0,T];L^2(\mathbb{T}^d))$, as $k\to\infty$. By the lower semi-continuity of the Sobolev norm, we have
	\begin{align*}
	4\tilde{\mathbb{E}}\Big(\int^T_0\int_{\mathbb{T}^d}|\nabla\sqrt{\tilde{\rho}}|^2\tilde{\rho}
	\phi(x,t,\tilde{\rho})I_A\Big)
	\leq 4\liminf_{k\to\infty}\tilde{\mathbb{E}}\Big(\int^T_0\int_{\mathbb{T}^d}|\nabla\sqrt{\tilde{\rho}^{k}}|^2\tilde{\rho}^k
	\phi(x,t,\tilde{\rho}^k)I_A\Big)=\tilde{\mathbb{E}}\Big(\tilde{q}(\phi)I_A\Big).
	\end{align*}
	
\noindent	$\mathbf{Step\ 3.\ The\ entropy\ estimate.}$ Since $\tilde{\rho}^k$ satisfies the entropy estimates \eqref{eq-5.5} uniformly on $k$, by (\ref{kk-67}) and the weak lower semi-continuity of the Sobolev norm, we deduce that
	\begin{align*}
	\tilde{\mathbb{E}}\Big[\int_{0}^{T}\int_{\mathbb{T}^{d}}|\nabla \sqrt{\tilde{\rho}}|^{2}\Big]\le \int_{\mathbb{T}^{d}}\Psi(\hat{\rho})+1+c(T, d,\|\hat{\rho}\|_{L^1(\mathbb{T}^d)},\|V\|_{L^{p*}([0,T];L^p(\mathbb{T}^d;\mathbb{R}^d))}),
	\end{align*}
where we have used $\underset{\xi>0}{{\rm{\min}}}\Psi(\xi)=\Psi(1)=-1$.
Thus, $\tilde{\rho}$ satisfies the condition \eqref{eq-2.5} in Definition \ref{def-2.4}.
	
\noindent	$\mathbf{Step\ 4.\ Passing\ to\ the\ limits.}$ From now on, we aim to show that $(\tilde{\rho},\tilde{q}, \tilde{\beta})$
	is a kinetic solution to \eqref{eq-8.1} in the sense of Definition \ref{def-2.4}. To facilitate the proof of convergence, we denote $\sigma(\xi)=\sqrt\xi$ for every $\xi\in[0,\infty)$.
	Clearly, (1) and (2) in Definition \ref{def-2.4} hold. To achieve the result, we need to verify that for every $j\in\mathbb{N}$ and $t\in[0,T]$, the kinetic function $\tilde{\chi}$ of $\tilde{\rho}$ satisfies
	\begin{align}\label{eq-8.23}
	&\int^t_0\int_{\mathbb{T}^d}\psi_j(x,\tilde{\rho})
	\nabla\cdot(\sigma(\tilde{\rho})\mathrm{d}\tilde{W}^F)
	=-\left.{\color{black}\int_0^{\infty}}\int_{\mathbb{T}^{d}}\tilde{\chi}(x,\xi,r)\psi_{j}(x,\xi)\right|_{r=0}^{r=t}
	-\int_{0}^{t}\int_{\mathbb{T}^{d}}\nabla\tilde{\rho}\cdot(\nabla\psi_{j})\left(x,\tilde{\rho}\right)\notag\\ &-\frac{1}{2}\int_{0}^{t}\int_{\mathbb{T}^{d}}F_1\left[\sigma^{\prime}\left(\tilde{\rho}\right)\right]^{2}\nabla\tilde{\rho}\cdot(\nabla\psi_{j})\left(x,\tilde{\rho}\right)-\frac{1}{2}\int_{0}^{t}\int_{\mathbb{T}^{d}}\sigma\left(\tilde{\rho}\right)\sigma^{\prime}\left(\tilde{\rho}\right)F_{2}\cdot(\nabla\psi_{j})\left(x,\tilde{\rho}\right)\notag\\
	&+\frac{1}{2}\int_{0}^{t}\int_{\mathbb{T}^{d}}\left(\partial_{\xi}\psi_{j}\right)\left(x,\tilde{\rho}\right)\sigma\left(\tilde{\rho}\right)\sigma^{\prime}\left(\tilde{\rho}\right)\nabla\tilde{\rho}\cdot F_{2}+\frac{1}{2}\int_{0}^{t}\int_{\mathbb{T}^{d}}F_{3}\sigma^{2}\left(\tilde{\rho}\right)\left(\partial_{\xi}\psi_{j}\right)\left(x,\tilde{\rho}\right)\notag\\
	&-\int_{0}^{t}{\color{black}\int_0^{\infty}}\int_{\mathbb{T}^{d}}\partial_{\xi}\psi_{j}(x,\xi)\mathrm{d}\tilde{q}-\int_{0}^{t}\int_{\mathbb{T}^{d}}\psi_{j}(x,\tilde{\rho})\nabla\cdot(\tilde{\rho}V(r)\ast\tilde{\rho}).
	\end{align}
	
By the definition of $\tilde{\chi}^{k}$, (\ref{eq-8.19}) and the dominated convergence theorem, for every $j\in\mathbb{N}$ and $r\in[0,T]$,
	\begin{align*}
	\lim_{k\to\infty}\tilde{\mathbb{E}}\Big|{\color{black}\int_0^{\infty}}\int_{\mathbb{T}^{d}}\tilde{\chi}^{k}(x,\xi,r)\psi_{j}(x,\xi)-{\color{black}\int_0^{\infty}}\int_{\mathbb{T}^{d}}\tilde{\chi}(x,\xi,r)\psi_{j}(x,\xi)\Big|=0.
	\end{align*}
Since $\psi_{j}\in\mathrm{C}_{c}^{\infty}\left(\mathbb{T}^{d}\times(0,\infty)\right)$, we can choose $0<\delta<M<\infty$ such that $[\delta,M]$ is the compact support of $\psi_{j}$ with respect to $\xi$. Then,
by Lemma \ref{lem-2}, (\ref{kk-67}), Proposition \ref{pro-5.4}, the boundedness of $F_1$ and \eqref{eq-5.2}, it follows that
for every $j\in\mathbb{N}$ and $t\in[0,T]$,
	\begin{align}\notag
	\lim_{k\to\infty}\tilde{\mathbb{E}}\int_{0}^{t}\int_{\mathbb{T}^{d}}
F_1\left[\sigma_{n_k}^{\prime}\left(\tilde{\rho}^k\right)\right]^{2}
\nabla\tilde{\rho}^k\cdot(\nabla\psi_{j})\left(x,\tilde{\rho}^k\right)
=\tilde{\mathbb{E}}\int_{0}^{t}\int_{\mathbb{T}^{d}}F_1\left[\sigma^{\prime}
\left(\tilde{\rho}\right)\right]^{2}\nabla\tilde{\rho}\cdot(\nabla\psi_{j})
\left(x,\tilde{\rho}\right).
	\end{align}
	Since for every $r\in \mathbb{N}$, $\tilde{q}^{k}\rightharpoonup\tilde{q}$ in $L^2(\tilde{\Omega}; \mathcal{M}_r)-$weak star as $k\to\infty$, it follows from the property $\left\{\psi_{j}\right\}_{j\in\mathbb{N}}\in\mathrm{C}_{c}^{\infty}\left(\mathbb{T}^{d}\times(0,\infty)\right)$ that for every $j\in\mathbb{N}$ and $t\in[0,T]$,
	\begin{align*}
	\lim_{k\to\infty}\tilde{\mathbb{E}}\int_{0}^{t}{\color{black}\int_0^{\infty}}\int_{\mathbb{T}^{d}}\partial_{\xi}\psi_{j}(x,\xi)\mathrm{d}\tilde{q}^k=\tilde{\mathbb{E}}\int_{0}^{t}{\color{black}\int_0^{\infty}}\int_{\mathbb{T}^{d}}\partial_{\xi}\psi_{j}(x,\xi)\mathrm{d}\tilde{q}.
	\end{align*}

Similar to the above, by the boundedness of $F_2$ and $F_3$, \eqref{eq-5.2}, Lemma \ref{rem-5.1}, (\ref{eq-8.19}) and (\ref{kk-67}), we get the other terms of (\ref{eq-8.23}) except the last kernel term.
For the kernel term, by integration by parts formula, we have
	\begin{align}\label{eq-8.27}	&\tilde{\mathbb{E}}\left|\int_{0}^{t}\int_{\mathbb{T}^{d}}\psi_{j}(x,\tilde{\rho}^k)\nabla\cdot(\tilde{\rho}^kV_{\gamma_k}(r)\ast\tilde{\rho}^k)-\psi_{j}(x,\tilde{\rho})\nabla\cdot(\tilde{\rho}V(r)\ast\tilde{\rho})\right|\le b^{k}_1+b^{k}_2,
\end{align}
where
\begin{align*}
b^{k}_1=&\tilde{\mathbb{E}}\Big|\int_{0}^{t}\int_{\mathbb{T}^{d}}(\partial_{\xi}\psi_{j})(x,\tilde{\rho}^k)\nabla\tilde{\rho}^k\cdot\tilde{\rho}^kV_{\gamma_k}(r)\ast\tilde{\rho}^k-(\partial_{\xi}\psi_{j})(x,\tilde{\rho})\nabla\tilde{\rho}\cdot\tilde{\rho}V(r)\ast\tilde{\rho}\Big|,\notag\\	b^{k}_2=&\tilde{\mathbb{E}}\Big|\int_{0}^{t}\int_{\mathbb{T}^{d}}(\nabla\psi_{j})(x,\tilde{\rho}^k)\cdot\tilde{\rho}^k V_{\gamma_k}(r)\ast\tilde{\rho}^k-(\nabla\psi_{j})(x,\tilde{\rho})\cdot\tilde{\rho}V(r)\ast\tilde{\rho}\Big|.
\end{align*}
	We firstly proceed with the term $b^{k}_1$. By Lemma \ref{lem-2}, we have
	\begin{align}\label{eq-8.28}
	b^{k}_1\le b^{k}_{11}+b^{k}_{12}+b^{k}_{13},
\end{align}
where
\begin{align*}	b^{k}_{11}:=&2\tilde{\mathbb{E}}\left|\int_{0}^{t}\int_{\mathbb{T}^{d}}
(\partial_{\xi}\psi_{j})(x,\tilde{\rho}^k)(\tilde{\rho}^k)^{3/2}(V_{\gamma_k}(r)-V(r))
\ast\tilde{\rho}^k\cdot\nabla\sqrt{\tilde{\rho}^k}\right|,\\ b^{k}_{12}:=&2\tilde{\mathbb{E}}\left|\int_{0}^{t}\int_{\mathbb{T}^{d}}
\left((\partial_{\xi}\psi_{j})(x,\tilde{\rho}^k)(\tilde{\rho}^k)^{3/2}V(r)\ast
\tilde{\rho}^k-(\partial_{\xi}\psi_{j})(x,\tilde{\rho})(\tilde{\rho})^{3/2}V(r)
\ast\tilde{\rho}\right)\cdot\nabla\sqrt{\tilde{\rho}^k}\right|,\\ b^{k}_{13}:=&2\tilde{\mathbb{E}}\left|\int_{0}^{t}\int_{\mathbb{T}^{d}}
(\partial_{\xi}\psi_{j})(x,\tilde{\rho})(\tilde{\rho})^{3/2}V(r)\ast
\tilde{\rho}\cdot(\nabla\sqrt{\tilde{\rho}^k}-\nabla\sqrt{\tilde{\rho}})\right|.
	\end{align*}
	Recall that  $[\delta,M]$ is the compact support of $\psi_{j}$ with respect to $\xi$. For the term $b^{k}_{11}$, it follows from H\"{o}lder's inequality, Proposition \ref{pro-5.4}, the property of $\psi_{j}$ and convolutional Young's inequality that there exists a constant $c\in(0,\infty)$ independent of $k$ such that
	\begin{align}\label{eq-8.29}
	b^{k}_{11}\le&2\left(\tilde{\mathbb{E}}\int_{0}^{t}\int_{\mathbb{T}^{d}}\left|(\partial_{\xi}\psi_{j})(x,\tilde{\rho}^k)(\tilde{\rho}^k)^{3/2}(V_{\gamma_k}(r)-V(r))\ast\tilde{\rho}^k\right|^2\right)^{\frac12}\left(\tilde{\mathbb{E}}\int_{0}^{t}\int_{\mathbb{T}^{d}}\left|\nabla\sqrt{\tilde{\rho}^k}\right|^2\right)^{\frac12}\notag\\
	\le&cM^{\frac32}\|\partial_{\xi}\psi_{j}\|_{L^{\infty}(\mathbb{R})}\|\hat{\rho}\|_{L^1(\mathbb{T}^d)}\|V_{\gamma_k}-V\|_{L^2([0,T];L^2(\mathbb{T}^d;\mathbb{R}^d))}\rightarrow 0,\ {\rm{as}}\ k\to\infty.
	\end{align}
According to the properties of $V$ and $\{\psi_{j}\}_{j\in\mathbb{N}}$, by \eqref{kk-67}, it holds that for every $j\in\mathbb{N}$ and $t\in[0,T]$,
	\begin{align}\label{eq-8.31}
	\lim_{k\to\infty}b^{k}_{13}=0.
		\end{align}
	For the term $b^{k}_{12}$, it follows from H\"{o}lder's inequality and Proposition \ref{pro-5.4} that there exists a constant $c\in(0,\infty)$ independent of $k$ such that
	\begin{align}\notag
	b^{k}_{12}\le&2\left(\tilde{\mathbb{E}}\int_{0}^{t}\int_{\mathbb{T}^{d}}
\left|(\partial_{\xi}\psi_{j})(x,\tilde{\rho}^k)(\tilde{\rho}^k)^{3/2}
V(r)\ast\tilde{\rho}^k-(\partial_{\xi}\psi_{j})(x,\tilde{\rho})(\tilde{\rho})^{3/2}
V(r)\ast\tilde{\rho}\right|^2\right)^{\frac12}\left(\tilde{\mathbb{E}}
\int_{0}^{t}\int_{\mathbb{T}^{d}}|\nabla\sqrt{\tilde{\rho}^k}|^2\right)^{\frac12}\\
\notag
	\le&c\left(\tilde{\mathbb{E}}\int_{0}^{t}\int_{\mathbb{T}^{d}}
\left|(\partial_{\xi}\psi_{j})(x,\tilde{\rho}^k)(\tilde{\rho}^k)^{3/2}
V(r)\ast\tilde{\rho}^k-(\partial_{\xi}\psi_{j})(x,\tilde{\rho})(\tilde{\rho})^{3/2}
V(r)\ast\tilde{\rho}\right|^2\right)^{\frac12}\\
\label{eq-8.32}
\lesssim&\left(b^{k}_{121}+b^{k}_{122}\right)^{\frac12},
	\end{align}
where
\begin{align*}
b^{k}_{121}:=&2\tilde{\mathbb{E}}\int_{0}^{t}\int_{\mathbb{T}^{d}}\left|(\partial_{\xi}\psi_{j})(x,\tilde{\rho}^k)(\tilde{\rho}^k)^{3/2}V(r)\ast(\tilde{\rho}^k-\tilde{\rho})\right|^2,\\
b^{k}_{122}:=&2\tilde{\mathbb{E}}\int_{0}^{t}\int_{\mathbb{T}^{d}}\left|\left((\partial_{\xi}\psi_{j})(x,\tilde{\rho}^k)(\tilde{\rho}^k)^{3/2}-(\partial_{\xi}\psi_{j})(x,\tilde{\rho})(\tilde{\rho})^{3/2}\right)V(r)\ast\tilde{\rho}\right|^2.
	\end{align*}
	For the term $b^{k}_{121}$, by using H\"{o}lder's inequality and convolutional Young's inequality, we deduce that
	\begin{align*}
	b^{k}_{121}\le& 2M^3\|\partial_{\xi}\psi_{j}\|^2_{L^{\infty}(\mathbb{R})}
\tilde{\mathbb{E}}\int_{0}^{T}\|V(r)\ast(\tilde{\rho}^k-\tilde{\rho})\|_{L^{2}(\mathbb{T}^d)}^2
\\
\le &2M^3\|\partial_{\xi}\psi_{j}\|^2_{L^{\infty}(\mathbb{R})}\tilde{\mathbb{E}}\int_{0}^{T}\|V(r)\|^2_{L^{2}(\mathbb{T}^d;\mathbb{R}^d)}\|\tilde{\rho}^k-\tilde{\rho}\|^2_{L^{1}(\mathbb{T}^d)}\\
	\le&2M^3\|\partial_{\xi}\psi_{j}\|^2_{L^{\infty}(\mathbb{R})}\left(\int_{0}^{T}\|V\|^{p^{*}}_{L^{2}(\mathbb{T}^d;\mathbb{R}^d)}\right)^{\frac{2}{p^{*}}}\left(\tilde{\mathbb{E}}\int_{0}^{T}\|\tilde{\rho}^k-\tilde{\rho}\|^{\frac{2p^{*}-4}{p^{*}}}_{L^{1}(\mathbb{T}^d)}\right)^{\frac{p^{*}}{p^{*}-2}}.
	\end{align*}
Since $\|\tilde{\rho}^k-\tilde{\rho}\|_{L^{1}(\mathbb{T}^d)}\le2\|\hat{\rho}\|_{L^{1}(\mathbb{T}^d)}$, by the dominated convergence theorem and  (\ref{eq-8.19}), we have
	\begin{align}\label{eq-8.36}
	\lim_{k\to\infty}b^{k}_{121}=0.
	\end{align}
Note that
	\begin{align*}	
\left|\left((\partial_{\xi}\psi_{j})(x,\tilde{\rho}^k)
(\tilde{\rho}^k)^{3/2}-(\partial_{\xi}\psi_{j})
(x,\tilde{\rho})(\tilde{\rho})^{3/2}\right)V\ast\tilde{\rho}\right|^2
\le4M^3\|\partial_{\xi}\psi_{j}\|_{L^{\infty}(\mathbb{R})}|V\ast\tilde{\rho}|^2,
	\end{align*}
	and
	\begin{align*} \tilde{\mathbb{E}}\int_{0}^{T}\int_{\mathbb{T}^{d}}|V(r)\ast\tilde{\rho}|^2\le\|\hat{\rho}\|^2_{L^{1}(\mathbb{T}^d)}\int_{0}^{T}\|V(r)\|^2_{L^{2}(\mathbb{T}^d;\mathbb{R}^d)}<\infty.
	\end{align*}
By (\ref{eq-8.19}) and applying the dominated convergence theorem, for every $j\in\mathbb{N}$ and $t\in[0,T]$, it yields that up to a subsequence
	\begin{align}\label{eq-8.37}
	\lim_{k\to\infty}b^{k}_{122}=0.
	\end{align}
	Based on \eqref{eq-8.32}-\eqref{eq-8.37}, we deduce that for every $j\in\mathbb{N}$ and $t\in[0,T]$,
	\begin{align}\label{eq-8.38}
	\lim_{k\to\infty}b^{k}_{12}=0.
	\end{align}
	Combining \eqref{eq-8.28}, \eqref{eq-8.29}, \eqref{eq-8.31} and \eqref{eq-8.38}, we get that for every $j\in\mathbb{N}$ and $t\in[0,T]$,
	\begin{align}\label{eq-8.39}
	\lim_{k\to\infty}b^{k}_{1}\le\lim_{k\to\infty}(b^{k}_{11}+b^{k}_{12}+b^{k}_{13})=0.
	\end{align}
	It remains to handle the term $b^{k}_2$. Clearly, we have
	\begin{align}\notag
b^{k}_2\leq &\tilde{\mathbb{E}}\left|\int_{0}^{t}\int_{\mathbb{T}^{d}}
(\nabla\psi_{j})(x,\tilde{\rho}^k)\cdot\tilde{\rho}^k(V_{\gamma_k}(r)-V(r))
\ast\tilde{\rho}^k\right|
+\tilde{\mathbb{E}}\left|\int_{0}^{t}\int_{\mathbb{T}^{d}}(\nabla\psi_{j})(x,\tilde{\rho}^k)\cdot\tilde{\rho}^k V(r)\ast(\tilde{\rho}^k-\tilde{\rho})\right|\\  \notag
+&\tilde{\mathbb{E}}\left|\int_{0}^{t}\int_{\mathbb{T}^{d}}\left((\nabla\psi_{j})(x,\tilde{\rho}^k)\tilde{\rho}^k-(\nabla\psi_{j})(x,\tilde{\rho})\tilde{\rho}\right)\cdot V(r)\ast\tilde{\rho}\right|\\
\label{eq-8.40}
=:&b^{k}_{21}+b^{k}_{22}+b^{k}_{23}.
\end{align}
For the term $b^{k}_{21}$, by  H\"{o}lder's inequality, convolutional Young's inequality and $\lim\limits_{k\to\infty}\| V_{\gamma_k}-V\|_{L^1([0,T];L^{p}(\mathbb{T}^d;\mathbb{R}^d))}=0$, for every $j\in\mathbb{N}$ and $t\in[0,T]$, as $k\rightarrow\infty$,
	\begin{align}\label{eq-8.41}
b^{k}_{21}\le M\|\nabla\psi_{j}\|_{L^{\infty}(\mathbb{R})}\|\hat{\rho}\|_{L^{1}(\mathbb{T}^d)}\| V_{\gamma_k}-V\|_{L^1([0,T];L^{p}(\mathbb{T}^d;\mathbb{R}^d))}\rightarrow0.
	\end{align}
	For the term $b^{k}_{22}$, in view of the property of $\psi_{j}$, H\"{o}lder's inequality and (\ref{eq-8.19}), as $k\rightarrow \infty$, we get
	\begin{align}\notag
	b^{k}_{22}\le&\|\nabla\psi_{j}\|_{L^{\infty}(\mathbb{R})}M\tilde{\mathbb{E}}\int_{0}^{T}\| V(r)\ast(\tilde{\rho}^k-\tilde{\rho})\|_{L^{p}(\mathbb{T}^d)}\\ \notag
\le&\|\nabla\psi_{j}\|_{L^{\infty}(\mathbb{R})}M\Big(\int_{0}^{T}\| V(r)\|^{p^{*}}_{L^{p}(\mathbb{T}^d;\mathbb{R}^d)}\Big)^{\frac{1}{p^{*}}}
\Big(\tilde{\mathbb{E}}\int_{0}^{T}
\|\tilde{\rho}^k-\tilde{\rho}\|^{\frac{p^{*}}{p^{*}-1}}_{L^{1}(\mathbb{T}^d)}\Big)
^{\frac{p^{*}-1}{p^{*}}}\\ \label{eq-8.42}
	\le&\|\nabla\psi_{j}\|_{L^{\infty}(\mathbb{R})}Mc(\|\hat{\rho}\|_{L^1(\mathbb{T}^d)})\| V\|_{L^{p^{*}}\left([0,T];L^p(\mathbb{T}^d;\mathbb{R}^d)\right)}\Big(\tilde{\mathbb{E}}\int_{0}^{T}\|\tilde{\rho}^k-\tilde{\rho}\|_{L^{1}(\mathbb{T}^d)}\Big)^{\frac{p^{*}-1}{p^{*}}}
\rightarrow  0.
	\end{align}
	For the last term $b^{k}_{23}$, by H\"{o}lder's and convolutional Young's inequalities, as $k\rightarrow \infty$, it holds that
	\begin{align}
	\notag \left|\left((\nabla\psi_{j})(x,\tilde{\rho}^k)\tilde{\rho}^k-(\nabla\psi_{j})(x,\tilde{\rho})\tilde{\rho}\right)\cdot V(r)\ast\tilde{\rho}\right|\le2\|\nabla\psi_{j}\|_{L^{\infty}(\mathbb{R})}M|V\ast\tilde{\rho}|.
	\end{align}
	Applying the dominated convergence theorem, for every $j\in\mathbb{N}$ and $t\in[0,T]$, we have that up to a subsequence
	\begin{align}\label{eq-8.43}
	\lim_{k\to\infty}b^{k}_{23}=0.
	\end{align}
 Based on \eqref{eq-8.40}-\eqref{eq-8.43}, we deduce that for every $j\in\mathbb{N}$ and $t\in[0,T]$,
	\begin{align}\label{eq-8.44}
	\lim_{k\to\infty}b^{k}_{2}=0.
	\end{align}
	According to \eqref{eq-8.27}, \eqref{eq-8.39} and \eqref{eq-8.44}, we have that for every $j\in\mathbb{N}$ and $t\in[0,T]$,
	\begin{align}
	\lim_{k\to\infty}\tilde{\mathbb{E}}\left|\int_{0}^{t}\int_{\mathbb{T}^{d}}\psi_{j}(x,\tilde{\rho}^k)\nabla\cdot(\tilde{\rho}^kV(r)\ast\tilde{\rho}^k)-\psi_{j}(x,\tilde{\rho})\nabla\cdot(\tilde{\rho}V(r)\ast\tilde{\rho})\right|=0.\notag
	\end{align}
Taking $k\rightarrow \infty$ on both sides of (\ref{kk-68}), with the aid of the  above estimates and (\ref{kk-70}), we conclude that
for every $t\in[0,T]$ and $j\in\mathbb{N}$, the kinetic function $\tilde{\chi}$ of $\tilde{\rho}$ satisfies \eqref{eq-8.23}.

\noindent $\mathbf{Step\ 5.\ Properties\ of\ \tilde{q}.}$ We will prove that $\tilde{q}$ satisfies \eqref{eq-2.7} in Definition \ref{def-2.4}.
 For any $M>0$, set
\begin{align*}
\tilde{\theta}_M(\xi):=\mathbf{1}_{[M,M+1]}(\xi),\quad \tilde{\Theta}_M(\xi):=\int^{\xi}_{0}\int^{r}_{0}\tilde{\theta}_M(s)\mathrm{d}s\mathrm{d}r.
\end{align*}
Following a method similar to that in  \textbf{Step 4}, we first use smooth approximation and apply  It\^{o} formula to $\tilde{\Theta}_M(\rho^{n_k,\gamma_k})$. Since $\tilde{q}^k$ and $q^{n_k,\gamma_k}$ have the same law on $\mathcal{M}(\mathbb{T}^{d}\times\mathbb{R}\times[0,T])$, and $\tilde{\rho}^k$ and $\rho^{n_k,\gamma_k}$ also have same distribution, it follows that as  $k\rightarrow \infty$, for every $M>0$,
\begin{align*}
&\tilde{\mathbb{E}}\int^T_0{\color{black}\int_0^{\infty}}\int_{\mathbb{T}^d}\tilde{\theta}_M(\xi)
\mathrm{d}\tilde{q}\\
=&\tilde{\mathbb{E}}{\color{black}\int_0^{\infty}}\int_{\mathbb{T}^d}\bar{\chi}(\hat{\rho})\tilde{\Theta}_M'(\xi)-\tilde{\mathbb{E}}{\color{black}\int_0^{\infty}}\int_{\mathbb{T}^d}\tilde{\chi}(x,\xi,T)\tilde{\Theta}_M'(\xi)+\tilde{\mathbb{E}}\int^T_0\int_{\mathbb{T}^d}\tilde{\theta}_M(\tilde{\rho})\left(\tilde{\rho}V\ast\tilde{\rho}\right)\cdot\nabla\tilde{\rho}+\frac12\tilde{\mathbb{E}}\int^T_0\int_{\mathbb{T}^d}\tilde{\theta}_M(\tilde{\rho})F_3\tilde{\rho}\\
\le&\tilde{\mathbb{E}}{\color{black}\int_0^{\infty}}\int_{\mathbb{T}^d}\mathbf{1}_{[M,M+1]}(\hat{\rho})+\tilde{\mathbb{E}}{\color{black}\int_0^{\infty}}\int_{\mathbb{T}^d}\mathbf{1}_{[M,M+1]}(\tilde{\rho}(T))
+2(M+1)^{3/2}\tilde{\mathbb{E}}\int^T_0\int_{\mathbb{T}^d}\mathbf{1}_{[M,M+1]}(\tilde{\rho})\nabla\sqrt{\tilde{\rho}}\cdot V(r)\ast\tilde{\rho}\\
&+\frac{M+1}{2}\tilde{\mathbb{E}}\int_{0}^{T}\int_{\mathbb{T}^{d}}F_{3}\mathbf{1}_{[M,M+1]}(\tilde{\rho}).
\end{align*}
By convolutional Young's inequality and Proposition \ref{pro-5.4}, it gives that  $\nabla\sqrt{\tilde{\rho}}\cdot V\ast\tilde{\rho}$ is integrable in $L^1(\tilde{\Omega}\times [0,T]\times \mathbb{T}^d)$. Then, by \cite[Lemma 7]{FG23},
it follows that
\begin{align}\label{kineticprp}
\liminf_{M\to \infty}\tilde{\mathbb{E}}\tilde{q}\Big([0,T]\times \mathbb{T}^d\times [M,M+1]\Big)=0.
\end{align}

\noindent $\mathbf{Step\ 6.\ Recovering\ the\ equation\ almost\ everywhere\ in\ time}.$
Denote by $\mathcal{A}\subset [0,T]$ the atoms of the kinetic measure $\tilde{q}$ in time,
\begin{align*}
  \mathcal{A}=\{t\in [0,T]: \tilde{q}(\{t\}\times \mathbb{T}^d\times \mathbb{R})\neq0\}.
\end{align*}
Since $\tilde{q}$ is almost surely a Radon measure on $\mathbb{T}^d\times [0,T]\times [0,\infty)$ satisfying  (\ref{kineticprp}), the set $\mathcal{A}$
is almost surely at most countable. It then follows from the above \textbf{Steps 1-5} that there almost surely exists a
random set of full measure $\mathcal{C}\subset [0,T]\setminus\mathcal{A}$ such that, for every $t\in \mathcal{C}$ and $\psi\in C^{\infty}_c(\mathbb{T}^d\times (0,\infty))$,
\begin{align}\label{r-2}
&\left.{\color{black}\int_0^{\infty}}\int_{\mathbb{T}^{d}}\tilde{\chi}(x,\xi,r)\psi(x,\xi)\right|_{r=0}^{r=t}
=		-\int_{0}^{t}\int_{\mathbb{T}^{d}}\nabla\tilde{\rho}\cdot(\nabla\psi)\left(x,\tilde{\rho}\right)-\frac{1}{2}\int_{0}^{t}\int_{\mathbb{T}^{d}}F_1\left[\sigma^{\prime}\left(\tilde{\rho}\right)\right]^{2}\nabla\tilde{\rho}\cdot(\nabla\psi)\left(x,\tilde{\rho}\right)\notag\\ &-\frac{1}{2}\int_{0}^{t}\int_{\mathbb{T}^{d}}\sigma\left(\tilde{\rho}\right)\sigma^{\prime}\left(\tilde{\rho}\right)F_{2}\cdot(\nabla\psi_{j})\left(x,\tilde{\rho}\right)+\frac{1}{2}\int_{0}^{t}\int_{\mathbb{T}^{d}}\left(\partial_{\xi}\psi\right)\left(x,\tilde{\rho}\right)\sigma\left(\tilde{\rho}\right)\sigma^{\prime}\left(\tilde{\rho}\right)\nabla\tilde{\rho}\cdot F_{2}\notag\\
	&+\frac{1}{2}\int_{0}^{t}\int_{\mathbb{T}^{d}}F_{3}\sigma^{2}\left(\tilde{\rho}\right)\left(\partial_{\xi}\psi\right)\left(x,\tilde{\rho}\right)
-\int^t_0\int_{\mathbb{T}^d}\psi(x,\tilde{\rho})
	\nabla\cdot(\sigma(\tilde{\rho})\mathrm{d}\tilde{W}^F)\notag\\
	&-\int_{0}^{t}{\color{black}\int_0^{\infty}}\int_{\mathbb{T}^{d}}\partial_{\xi}\psi(x,\xi)\mathrm{d}\tilde{q}-\int_{0}^{t}\int_{\mathbb{T}^{d}}\psi(x,\tilde{\rho})\nabla\cdot(\tilde{\rho}V(r)\ast\tilde{\rho}).
\end{align}
In this position, we conclude that there exists a stochastic basis $(\tilde{\Omega},\tilde{\mathcal{F}},\{\tilde{\mathcal{F}}(t)\}_{t\in[0,T]},\tilde{\mathbb{P}})$, a trace-class Brownian motion $\tilde{W}^{F}$ on  $L^2(\mathbb{T}^{d})$, and a process $\tilde{\rho}\in L^1(\tilde{\Omega};L^1([0,T]\times \mathbb{T}^d))$, which is a renormalized kinetic solution of \eqref{eq-8.1} in the sense of Definition \ref{def-2.4} with the equation (\ref{eq-2.6}) holds for almost every $t\in [0,T]$ and $\tilde{\rho}(\cdot,0)=\hat{\rho}$.
 We complete the proof of (i).

\noindent \textbf{The proof of (ii).}\ Under further Assumption (A2), it follows that Theorem \ref{the-uniq} holds. Then we  can
 follow the method of \cite[Theorem 5.25]{FG21} to deduce that the above $\tilde{\rho}$ admits an almost surely representative in $\mathrm{C}\left([0,T];L^{1}(\mathbb{T}^{d})\right)$. As a result,
the kinetic measure $\tilde{q}$ has no atoms in time, thereby $\tilde{\rho}$ satisfies the equality (\ref{eq-2.6}) for every $t\in [0,T]$. To complete the proof of (ii),
it remains to prove the existence of a renormalized solution in the probabilistic strong sense.
 Thanks to the pathwise uniqueness obtained by Theorem \ref{the-uniq}, by adopting a methodology analogous to that in \cite[Theorem 5.25, Conclusion]{FG21}, we obtain that the limiting joint distribution $\mu$ is supported on the diagonal, i.e., $\mu({(x,y)\in\bar{X}\times\bar{X}: x=y}) = 1$. According to Remark \ref{remark-6.2}, the conditions of Lemma \ref{lem-8.1} are met. Thus, for the original solutions $\left\{\rho^{n,\gamma}\right\}_{ n\in\mathbb{N},\gamma \in(0,1)}$ on the original probability space $(\Omega, \mathcal{F}, \mathbb{P})$, after passing to a subsequence $\gamma_{k}\to0, n_{k}\to\infty$, there exists a random variable $\rho\in L^{1}\left(\Omega\times[0,T];L^{1}\left(\mathbb{T}^{d}\right)\right)$ such that $\left\{\rho^{n_{k},\gamma_{k}}\right\}_{k\in\mathbb{N}}$ converge to $\rho$ in probability. A further subsequence still denoted by $\{n_{k},\gamma_{k}\}_{k\in\mathbb{N}}$ yields $\rho^{n_{k},\gamma_{k}}\to\rho$ almost surely. A simplified variant of the previous argument confirms that $\rho$ satisfies the criteria of a renormalized kinetic solution of \eqref{eq-8.1} in the sense of Definition \ref{def-2.4} on $(\Omega, \mathcal{F}, \mathbb{P})$, hence $\rho$ is a probabilistically strong solution to \eqref{eq-8.1}. The proof is completed with the assistance of Theorem \ref{the-uniq}.
\end{proof}

\section{Well-posedness of the fluctuating Ising-Kac-Kawasaki equation}\label{sec-7}
In this section, we consider the fluctuating Ising-Kac-Kawasaki equation in any dimension $d\ge1$, which is of the following form
\begin{equation}\label{mainspde2}
\mathrm{d}\rho=\Delta\rho \mathrm{d}t-\nabla\cdot[(1-\rho^2) \nabla J\ast\rho]\mathrm{d}t-\nabla\cdot(\sqrt{1-\rho^2}\circ \mathrm{d}W^F),
\end{equation}
where $W^F$ is defined by (\ref{kk-43}) and $J$ denotes the Kac potential satisfying $\nabla J\in C^{\infty}(\mathbb{T}^d;\mathbb{R}^d)$. As discussed in the Introduction part, (\ref{mainspde2}) exhibits the same nonlinear fluctuations as Kawasaki dynamics for the Ising-Kac model. Thus, the well-posedness of (\ref{mainspde2}) plays a fundamental role in studying nonlinear fluctuations of Kawasaki dynamics.

Due to the structural resemblance between \eqref{mainspde2} and the Dean-Kawasaki equation \eqref{mainspde}, we  can employ a similar proof strategy as used in the previous sections to establish the well-posedness of the renormalized kinetic solution to \eqref{mainspde2}. However,  there are still some challenges that need to be handled carefully, for instance, the lack of $L^1(\mathbb{T}^d)$-norm preservation and two singularities appear in the derivative of the diffusion coefficient $\sqrt{1-\xi^2}$.

To maintain the clarity, we only present the proof of entropy estimate and the tightness (as seen in Theorem \ref{the-existence}), which are two essential ingredients in proving the well-posedness of \eqref{mainspde2}.

\subsection{Renormalized\ kinetic\ solution\ to\ \eqref{mainspde2}}
 The Stratonovich equation \eqref{mainspde2} is formally equivalent to the  It\^o equation
\begin{equation}\label{eqq1}
\mathrm{d}\rho=\Delta\rho \mathrm{d}t-\nabla\cdot[(1-\rho^2)\nabla J\ast\rho)\mathrm{d}t-\nabla\cdot(\sqrt{1-\rho^2} \mathrm{d}W^F)+\frac{1}{2}\nabla\cdot\Big(F_1\frac{\rho^2}{1-\rho^2}\nabla\rho-\rho F_2\Big)\mathrm{d}t.
\end{equation}

For a given bounded solution $\rho$ of \eqref{eqq1}, we define the kinetic function $\chi: \mathbb{T}^{d}\times\mathbb{R}\times[0,T]\to\{-1,1\}$ of $\rho$ as
\begin{align*}
\chi(x,\xi,t):=\mathbf{1}_{\{0<\xi<\rho(x,t)\}}-{\color{black}\mathbf{1}_{\{\rho(x,t)<\xi<0\}}}.
\end{align*}
Formally, we have the following the distributional equalities
\begin{align}
\nabla\chi=2\delta_{0}(\xi-\rho)\nabla\rho, \quad \partial_{\xi}\chi=2\delta_{0}(\xi)-2\delta_{0}(\xi-\rho)\quad {\rm{and}}\ \rho=\frac12\int_{\mathbb{R}}\chi \mathrm{d}\xi.
\end{align}
Then an informal application of It\^o's formula suggests that the kinetic function $\chi$ of $\rho$ satisfies the equation
\begin{align}\label{eqq2}
\partial_t\chi=&2\nabla\cdot\left(\delta_{0}(\xi-\rho)\nabla\rho\right)+\nabla\cdot\left(\delta_{0}(\xi-\rho)\left(F_1\frac{\xi^2}{1-\xi^2}\nabla\rho-\xi F_2\right)\right)-2\delta_{0}(\xi-\rho)\nabla\cdot[(1-\rho^2) \nabla J\ast\rho]\notag\\
&+2\partial_{\xi}q-\partial_{\xi}\left(\delta_{0}(\xi-\rho)\left(-\xi\nabla\rho\cdot F_2+(1-\xi^2) F_3\right)\right)-2\delta_{0}(\xi-\rho)\nabla\cdot(\sqrt{1-\rho^2} \dot{W}^F),
\end{align}
where\  $q=\delta_0(\xi-\rho)|\nabla\rho|^2$ is the parabolic defect measure.

Similar to the Dean-Kawasaki equation, before defining a renormalized kinetic solution to \eqref{mainspde2}, we have to specify the kinetic measure.
\begin{definition}\label{def-7.2}
	Let $(\Omega,\mathcal{F},\mathbb{P})$ be a probability space with a filtration $(\mathcal{F}_{t})_{t\in[0,\infty)}.$ A kinetic measure is a measurable map $q$ from $\Omega$ to the space of nonnegative, finite Radon measures on $\mathbb{T}^{d}\times{[-1,1]}\times[0,T]$ that satisfies the property that the process
	\begin{align*}
		(\omega,t)\in \Omega\times[0,T]\to
		\int^t_0{\color{black}\int_{-1}^1}\int_{\mathbb{T}^d}\psi(x,\xi)\mathrm{d}q(x,\xi,r)
	\end{align*}
	is $\mathcal{F}_t-$predictable, for every $\psi\in C^{\infty}_c(\mathbb{T}^d\times [-1,1])$.
\end{definition}

We will prove the well-posedness of \eqref{mainspde2} for initial data with finite mathematical entropy.
 Let
\begin{align}\label{psi}
	\psi(\xi)=\frac{1}{2}\log\frac{1+\xi}{1-\xi},
\end{align}
and let $\Psi:[-1,1]\rightarrow\mathbb{R}$ be a function defined by
\begin{align}\label{eq-psi}
\Psi(\xi)= \frac{1}{2}\Big[(\xi+1)\log(\xi+1)-(\xi+1)+(1-\xi)\log(1-\xi)-(1-\xi)\Big].
\end{align}
A direct computation shows that $\Psi'(\xi)=\psi(\xi)$.
Define
\begin{align}\label{eqq3}
\overline{\text{{\rm{Ent}}}}(\mathbb{T}^{d})=\Big\{\rho: -1\leq\rho\leq1\ a.e.,\ {\rm{and}}\ \int_{\mathbb{T}^{d}}\Psi(\rho(x))\mathrm{d}x<\infty\Big\}.
\end{align}

\begin{definition}(Renormalized kinetic solution)\label{def-7.3}
	Let $\rho_0\in  \overline{\text{{\rm{Ent}}}}(\mathbb{T}^{d})$. A renormalized kinetic solution of (\ref{mainspde2}) with initial datum $\rho(\cdot,0)=\rho_0$ is an almost surely continuous $L^2(\mathbb{T}^d;[-1,1])$-valued $\mathcal{F}_t$-predictable function $\rho\in L^2\left(\Omega\times[0,T];L^2(\mathbb{T}^d;[-1,1])\right)$ that satisfies the following properties.
	\begin{enumerate}
		\item Essentially bounded: almost surely for every $t\in[0,T]$,
		\begin{equation}\label{eqq4}
		\rho(\cdot,t)\in[-1,1],\ a.e.
		\end{equation}
		\item Regularity of $\sqrt{1-\rho^2}$: there exists a constant $c\in(0,\infty)$ depending on $T,\rho_0, J$ and $d$ such that
		\begin{equation}\label{eqq5}
		\mathbb{E}\int^T_0\int_{\mathbb{T}^d}\Big[|\nabla\sqrt{1-\rho^2}|^2+|\nabla \rho|^2\Big]\mathrm{d}x\mathrm{d}s\le c(T,d,\rho_0,J).
		\end{equation}
		Furthermore, there exists a finite nonnegative kinetic measure $q$ satisfying the following properties.
		\item Regularity: almost surely
		\begin{align}\label{eqq6}
		\delta_{0}(\xi-\rho)|\nabla \rho|^{2}\le q\quad {\rm{on}}\ \mathbb{T}^d\times[-1,1]\times[0,T].
		\end{align}
\item Optimal regularity: the measure $\mu$ defined by
	\begin{align}\label{mu}
		\mathrm{d}\mu=\left(1-\xi^2\right)^{-1}\mathrm{d}q\ \text{is finite on}\ \mathbb{T}^d\times(-1,1)\times[0,T].
	\end{align}

		\item The equation: for every $\varphi\in\mathrm{C}_{c}^{\infty}\left(\mathbb{T}^{d}\times(-1,1)\right)$, almost surely for every $t\in[0,T]$,
		\begin{align}\label{eqq8}
		&{\color{black}\int_{-1}^1}\int_{\mathbb{T}^d}\chi(x,\xi,t)\varphi(x,\xi)={\color{black}\int_{-1}^1}\int_{\mathbb{T}^d}\bar{\chi}(\rho_0)\varphi(x,\xi)-2\int_0^t\int_{\mathbb{T}^d}\nabla\rho\cdot(\nabla\varphi)(x,\rho)\notag\\
		&-\int_0^t\int_{\mathbb{T}^d}F_1(x)\frac{\rho^2}{1-\rho^2}\nabla\rho\cdot(\nabla\varphi)(x,\rho)+\int_0^t\int_{\mathbb{T}^d}\rho F_2(x)\cdot(\nabla\varphi)(x,\rho)\notag\\
		&-2\int_0^t{\color{black}\int_{-1}^1}\int_{\mathbb{T}^d}\partial_{\xi}\varphi(x,\xi)dq+\int_0^t\int_{\mathbb{T}^d}\left(-\rho\nabla\rho\cdot F_2(x)+(1-\rho^2)F_3(x)\right)(\partial_{\xi}\varphi)(x,\rho)\notag\\
		&-2\int_0^t\int_{\mathbb{T}^d}\varphi(x,\rho)\nabla\cdot((1-\rho^2)\nabla J\ast\rho)-2\int_0^t\int_{\mathbb{T}^d}\varphi(x,\rho)\nabla\cdot(\sqrt{1-\rho^2} \mathrm{d}W^F(r)),
		\end{align}
		where $\bar{\chi}(\rho_0)(x,\xi):={\color{black}\mathbf{1}_{\{0<\xi<\rho_0(x)\}}-\mathbf{1}_{\{\rho_0(x)<\xi<0\}}}$.
	\end{enumerate}	
\end{definition}

In order to prove the existence of renormalized kinetic solution to \eqref{eqq1}, we also need to introduce approximation equations with regularized coefficients. Similarly to \cite[Proposition 4.1]{DFG} and  \cite[Lemma 5.18]{FG21}, we propose a smooth sequence to approximate $\sqrt{1-\xi^2}$.

\begin{lemma}
	Let $s^{\frac12}:\mathbb{R}\rightarrow[0,1]$ be defined by
	\begin{align*}
		s^{\frac12}(\xi)=\sqrt{1-\xi^2}\  \text{if}\  \xi\in[-1,1]\ \text{and}\ s^{\frac12}(\xi)=0\ \text{if}\ \xi\notin [-1,1].
	\end{align*}
	Then there exists a sequence of smooth, compactly supported approximations
	$\{s^{\frac12,\eta}\}_{\eta\in(0,1)}$ satisfying
	\begin{align*}
		\lim_{\eta\rightarrow 0}\|s^{\frac{1}{2},\eta}-s^{\frac{1}{2}}\|_{L^{\infty}(\mathbb{R})}=0.
	\end{align*}
	Furthermore, $s^{\frac12,\eta}$ has the following properties.
	\begin{enumerate}
		\item[(1)] $s^{\frac12,\eta}\in C(\mathbb{R})\cap C^{\infty}(\mathbb{R})$ with $s^{\frac12,\eta}(1)=s^{\frac12,\eta}(-1)=0$ and $\left(s^{\frac12,\eta}\right)^{\prime}\in C^{\infty}_c((-1,1))$,
		\item[(2)] there exists $c\in(0,\infty)$ such that for every $\xi\in[-1,1]$,
		\begin{align}\label{eqq20}
			|s^{\frac12,\eta}(\xi)|\le c\sqrt{1-\xi^2} \text{ uniformly with respect to } \eta\in(0,1),
		\end{align}
		\item[(3)] for every $\delta\in(0,1)$, there exists $c_{\delta}\in(0,\infty)$ such that uniformly with respect to $\eta\in(0,1)$,
		\begin{align}\label{eqq21}		\left[\left(s^{\frac12,\eta}\right)^{\prime}(\xi)\right]^4\mathbf{1}_{\{-1+\delta\le\xi\le1-\delta\}}+\left|s^{\frac12,\eta}(\xi)\left(s^{\frac12,\eta}\right)^{\prime}(\xi)\right|^2\mathbf{1}_{\{-1+\delta\le\xi\le1-\delta\}}\le c_{\delta}.
		\end{align}
	\end{enumerate}
	
\end{lemma}


Now, for every $\eta\in(0,1)$, we consider an approximating equation of \eqref{eqq1},
\begin{align}\label{eq-5}
\partial_t\rho^{\eta}=&\Delta\rho^{\eta}-\nabla\cdot[(1-\left(\rho^{\eta}\right)^2)\nabla J\ast\rho^{\eta}]-\nabla\cdot\Big(s^{\frac{1}{2},\eta}(\rho^{\eta})\dot{W}^F\Big)\notag\\
&+\frac{1}{2}\nabla\cdot\Big(F_1\Big|(s^{\frac{1}{2},\eta})'(\rho^{\eta})\Big|^2\nabla\rho^{\eta}-s^{\frac{1}{2},\eta}(\rho^{\eta})(s^{\frac{1}{2},\eta})'(\rho^{\eta}) F_2\Big),
\end{align}
with $\rho^{\eta}(0)=\rho_{0}$. The weak solution of \eqref{eq-5} can be defined analogously to Definition \ref{def-5.2}.
\begin{remark}
Proceed similarly as in Theorem \ref{the-6.4}, we can show that if the initial value $\rho_{0}\in  [-1,1]$, then $\rho^{\eta}\in [-1,1]$ almost surely.
\end{remark}

\subsection{Entropy estimate}
 Recall that $\Psi(\cdot)$ and $\overline{\text{{\rm{Ent}}}}(\mathbb{T}^{d})$ are defined by  \eqref{eq-psi} and \eqref{eqq3}, respectively. We provide the following entropy dissipation estimate.
\begin{proposition}\label{pro-7.4}
Let $\rho_0\in \overline{\text{{\rm{Ent}}}}(\mathbb{T}^{d})$. For any $\eta\in(0,1)$, let $\rho^{\eta}$ be a weak solution of (\ref{eq-5}) 
with initial data $\rho^{\eta}(\cdot,0)=\rho_0$. Then there exists a constant $c\in(0,\infty)$ depending on  $T$ and $\|\nabla J\|_{L^1(\mathbb{T}^d;\mathbb{R}^d)}$ such that

	\begin{align}\label{entropy}
		\mathbb{E}\left[\sup _{t \in[0, T]} \int_{\mathbb{T}^d} \Psi(\rho^{\eta}(x, t))\right]+\mathbb{E}\int_0^T\int_{\mathbb{T}^d}  \frac{1}{1-\left(\rho^{\eta}\right)^2}|\nabla \rho^{\eta}|^2\leqslant \int_{\mathbb{T}^d} \Psi\left(\rho_0\right)+c.
	\end{align}

\end{proposition}

\begin{proof}
For the above $\Psi$, we introduce a sequence of smooth approximating functions denoted by $\Psi_\delta$ with $\delta \in(0,1)$. Here, we require that $\Psi_\delta(0)=\Psi(0)$ and $\Psi_\delta^{\prime}(\xi)=\frac{1}{2(1+\delta)} \log \left(\frac{1+\xi+\delta}{1-\xi+\delta}\right)$. Clearly, $\Psi_\delta^{''}(\xi)=\frac{1}{(1+\delta)^2-\xi^2}$. Applying It\^{o} formula, we deduce that almost surely for every $t \in[0, T]$,
\begin{align}\label{eqq13}
\left.\int_{\mathbb{T}^d} \Psi_{\delta}(\rho^{\eta}(x, r))\right|_{\mathrm{r}=0} ^{\mathrm{r}=t}+\int_0^t \int_{\mathbb{T}^d} \frac{1}{(1+\delta)^2-\left(\rho^{\eta}\right)^2}|\nabla \rho^{\eta}|^2=K_{t}^{\mathrm{ker}}+K_{t}^{\text {mart }}+K_{t}^{\mathrm{err}},
\end{align}
where  $K_{t}^{\mathrm{mart}}$ represents the term related to the martingale, and $K_{t}^{\text {err }}$ represents the term associated with the transformation of the It\^{o}-Stratonovich integral and the quadratic variation term from the  It\^{o} formula.  The specific forms and estimates of these two terms are similar to \cite{FG21}, thus we omit them. In the following, we only focus on the kernel term $K^{err}_t$ which is of the form
\begin{align*}
	K_{t}^{\mathrm{ker}}=\int_0^t \int_{\mathbb{T}^d} \frac{1}{(1+\delta)^2-\left(\rho^{\eta}\right)^2}((1-(\rho^{\eta})^2) \nabla J\ast \rho^{\eta}) \cdot \nabla \rho^{\eta}.
\end{align*}
Since $|\rho^{\eta}|\le1$ almost surely, it follows from the Young inequality that there exists a constant $c\in(0,\infty)$ such that
 \begin{align}\label{eqq12}
\mathbb{E}\sup _{t \in[0, T]}K_{t}^{\mathrm{ker}}\le& \mathbb{E}\int_0^T \int_{\mathbb{T}^d} \left|\frac{1}{(1+\delta)^2-\left(\rho^{\eta}\right)^2} \nabla \rho^{\eta} \cdot((1-(\rho^{\eta})^2) ) \nabla J\ast \rho^{\eta})\right|\notag \\
	\leq&\frac{1}{2} \mathbb{E} \int_0^T \int_{\mathbb{T}^d} \frac{1}{(1+\delta)^2-\left(\rho^{\eta}\right)^2}|\nabla \rho^{\eta}|^2+\frac12 \mathbb{E} \int_0^T \int_{\mathbb{T}^d} \frac{1}{(1+\delta)^2-\left(\rho^{\eta}\right)^2}\left|1-(\rho^{\eta})^2\right|^2|\nabla J\ast \rho^{\eta}|^2 \notag\\
	\leq &\frac{1}{2} \mathbb{E} \int_0^T \int_{\mathbb{T}^d} \frac{1}{(1+\delta)^2-\left(\rho^{\eta}\right)^2}|\nabla \rho^{\eta}|^2+c(\|\nabla J\|_{L^1(\mathbb{T}^d;\mathbb{R}^d)}).
\end{align}
Therefore,  there exists a constant $c\in(0,\infty)$  independent of $\delta \in(0,1)$ such that

	\begin{align*}
		&\mathbb{E}\left[\sup _{t \in[0, T]} \int_{\mathbb{T}^d} \Psi_\delta(\rho^{\eta}(x, t))\right]+\mathbb{E}\left[\int_0^T\int_{\mathbb{T}^d} \frac{1}{(1+\delta)^2-\left(\rho^{\eta}\right)^2}|\nabla \rho^{\eta}|^2\right] \leqslant \int_{\mathbb{T}^d} \Psi_\delta\left(\rho_0\right)+c(T,\|\nabla J\|_{L^1(\mathbb{T}^d;\mathbb{R}^d)}).
	\end{align*}
	The remaining proof can be done by a similar method as Proposition \ref{pro-5.4}, thus we get the result \eqref{entropy}.
\end{proof}

\begin{remark}
	Similarly to Lemma \ref{lem-2}, the weak derivative
	\begin{align*}
		\frac{1}{1-\left(\rho^{\eta}\right)^2}|\nabla \rho^{\eta}|^2&=\left|\nabla \sqrt{1-\left(\rho^{\eta}\right)^2}\right|^2+|\nabla \rho^{\eta}|^2
	\end{align*}
	holds for almost every $(x,t)\in\mathbb{T}^d\times[0,T]$. Therefore,  the entropy estimate given in \eqref{entropy} not only implies the regularity of $\sqrt{1-\rho^2}$, but also indicates that the kinetic measure is finite.
\end{remark}

\subsection{Tightness\ of\ approximating\ solutions}\label{sec-r1}
 {\black We aim to establish the tightness of the laws of the family $\{\rho^{\eta}\}_{\eta \in (0,1)}$ in the space $L^{2}\big([0,T];L^{2}(\mathbb{T}^{d};[-1,1])\big)$. Compared to the Dean-Kawasaki equation, the situation is similar in that the equation (\ref{eqq1}) also contains a singular term
$\nabla\cdot(F_1\frac{\rho^2}{1-\rho^2}\nabla\rho)$, but the difference lies in that this singular term has two singularities at $\rho=-1$ and $\rho=+1$. In this case, it seems that the approach in Section \ref{subsec-5.3} based on equivalent metric is difficult to be applied to the equation (\ref{eqq1}). Even if we modify the truncation functions to force the solution away from $-1$ and $+1$, there are difficulties in constructing an equivalent metric to the strong norm topology on $L^2([0,T];L^2(\mathbb{T}^d;[-1,1]))$.
Fortunately, this obstacle can be solved via
 a transformation that $v^{\eta}=1-(\rho^{\eta})^2$, where $\rho^{\eta}$ is the solution of approximating equation. A simple calculation shows that the singular term related to $v=:v^{\eta}$ is changed to $-\frac{1}{2}\nabla\cdot(F_1\frac{1-v}{v}\nabla v)-\frac{1}{4}F_1\frac{|\nabla v|^2}{v}$. Clearly, it has only one singularity at $v=0$, thus we can establish the tightness of $v^{\eta}=1-(\rho^{\eta})^2$ on $L^1([0,T];L^1(\mathbb{T}^d;[0,1]))$ by using the same method as in Section \ref{subsec-5.3}. With the aid of this result and the weak convergence of $\{\rho^{\eta}\}_{\eta \in (0,1)}$  in $L^2([0,T];L^2(\mathbb{T}^d;[-1,1]))$, we can conclude that $\{\rho^{\eta}\}_{\eta \in (0,1)}$ is tight on $L^2([0,T];L^2(\mathbb{T}^d;[-1,1]))$.

In accordance with Lemma \ref{lem-7.2} and Proposition \ref{pro-7.3}, we need to make estimates of $L^2([0, T] ; H^{1}(\mathbb{T}^d))$ and $W^{\beta, 1}\left([0, T] ; H^{-l}\left(\mathbb{T}^d\right)\right)$ norms for $v^{\eta}=1-(\rho^{\eta})^2$. Firstly, by
utilizing the entropy dissipation estimates established in Proposition \ref{pro-7.4}, together with the chain rule and the boundedness of $\rho^{\eta}$, we reach the following result.

 \begin{lemma}\label{lem-7.6}
	Let $\rho_0\in \overline{\text{{\rm{Ent}}}}(\mathbb{T}^{d})$. For any $\eta\in(0,1)$, let $\rho^{\eta}$ be a weak solution of (\ref{eq-5}) 
	with initial data $\rho^{\eta}(\cdot,0)=\rho_0$.  Then there exists a constant $c\in(0, \infty)$ depending on $ \delta,T,d, \|\nabla J\|_{L^1\left(T^d;\mathbb{R}^d\right)}$, and $\rho_0$ such that
	\begin{align*}
		\mathbb{E}\left[\left\|h_\delta(1-(\rho^{\eta})^2)\right\|_{L^2\left([0, T] ; H^{1}\left(\mathbb{T}^d\right)\right)}\right] \leqslant \lambda\left( \delta,T, d, \|\nabla J\|_{L^1\left(T^d;\mathbb{R}^d\right)}, \rho_0\right).
	\end{align*}
\end{lemma}
Now, let us focus on
the estimation of $W^{\beta,1}\big([0,T];H^{-l}(\mathbb{T}^d)\big)$ norm for $v^{\eta}=1-(\rho^{\eta})^2$. For every test function $\phi \in C^{\infty}(\mathbb{T}^d)$, applying It\^{o}'s formula to the quantity $\int_{\mathbb{T}^d} h_{\delta}\big(1 - (\rho^{\eta})^2\big) \phi$, we get the following identity in the sense of distribution:
 \begin{align}\notag
 &dh_{\delta}(1-(\rho^{\eta})^2)\\  \notag
 =&-2\nabla\cdot(h'_{\delta}(1-(\rho^{\eta})^2)\rho^{\eta}\nabla\rho^{\eta})dt-2[2h''_{\delta}(1-(\rho^{\eta})^2)(\rho^{\eta})^2-h'_{\delta}(1-(\rho^{\eta})^2)]|\nabla\rho^{\eta}|^2dt\\\notag
 &+2h'_{\delta}(1-(\rho^{\eta})^2)\rho^{\eta}\nabla\cdot[(1-(\rho^{\eta})^2)\nabla J\ast\rho^{\eta}]dt-\nabla\cdot\left[h'_{\delta}(1-(\rho^{\eta})^2)\rho^{\eta}|(s^{\frac{1}{2},\eta})'(\rho^{\eta}))|^2\nabla\rho^{\eta}F_1\right] dt\\ \notag
 &
 -[2h''_{\delta}(1-(\rho^{\eta})^2)(\rho^{\eta})^2-h'_{\delta}(1-(\rho^{\eta})^2)]|(s^{\frac{1}{2},\eta})'(\rho^{\eta})|^2|\nabla\rho^{\eta}|^2F_1dt\\ \notag
 &+h'_{\delta}(1-(\rho^{\eta})^2)\rho^{\eta}\nabla\cdot(s^{\frac{1}{2},\eta}(\rho^{\eta})(s^{\frac{1}{2},\eta})'(\rho^{\eta})F_2)dt\\ \notag
 &+[2h''_{\delta}(1-(\rho^{\eta})^2)(\rho^{\eta})^2-h'_{\delta}(1-(\rho^{\eta})^2)]\left[|\nabla s^{\frac{1}{2},\eta}(\rho^{\eta})|^2F_1+|s^{\frac{1}{2},\eta}(\rho^{\eta})|^2F_3+2\nabla s^{\frac{1}{2},\eta}(\rho^{\eta})s^{\frac{1}{2},\eta}(\rho^{\eta})F_2\right]dt\\
 \label{transform-eq}
 &+2h'_{\delta}(1-(\rho^{\eta})^2)\rho^{\eta}\nabla\cdot(s^{\frac{1}{2},\eta}(\rho^{\eta})dW^F).
 \end{align}
 By applying the method presented in Lemma \ref{lem-7.2} to equation (\ref{transform-eq}), and combining it with the estimates established in Lemma \ref{lem-7.6}, we obtain the following time-regularity estimates.
 \begin{lemma}\label{pro-7.7}
 	Let $\rho_0\in \overline{\text{{\rm{Ent}}}}(\mathbb{T}^{d})$. For  any $\eta\in(0,1)$, let $\rho^{\eta}$ be a weak solution of (\ref{eq-5}) 
 	with initial data $\rho^{\eta}(\cdot,0)=\rho_0$.  Then, for every $\beta \in(0,1 / 2)$ and $l>\frac{d}{2}+1$, there exists $\lambda \in(0, \infty)$ depending on $\delta, \beta, T,d, l,\|\nabla J\|_{L^1\left(T^d;\mathbb{R}^d\right)}$, and $\rho_0$ such that
 	\begin{align*}
 	\mathbb{E}\left[\left\|h_\delta(1-(\rho^{\eta})^2)\right\|_{W^{\beta, 1}\left([0, T] ; H^{-l}\left(\mathbb{T}^d\right)\right)}\right] \leqslant \lambda\left(\delta, \beta, T, d, l,\|\nabla J\|_{L^1\left(T^d;\mathbb{R}^d\right)}, \rho_0\right).
 	\end{align*}
 \end{lemma}

Based on Lemmas \ref{lem-7.6} and \ref{pro-7.7}, using the same method as Section \ref{subsec-5.3}, we get the tightness of the laws of $1-(\rho^{\eta})^2$ on $L^{1}\big([0,T];L^{1}(\mathbb{T}^{d};[0,1])\big)$. This, together with the weak convergence of $\{\rho^{\eta}\}_{\eta \in (0,1)}$  in $L^2([0,T];L^2(\mathbb{T}^d;[-1,1]))$, we conclude that $\{\rho^{\eta}\}_{\eta \in (0,1)}$ is tight on $L^2([0,T];L^2(\mathbb{T}^d;[-1,1]))$. It reads as follows.
\begin{proposition}\label{pro-tight}
	Let $\rho_0\in \overline{\text{{\rm{Ent}}}}(\mathbb{T}^{d})$. For  any $\eta\in(0,1)$, let $\rho^{\eta}$ be a weak solution of (\ref{eq-5}) 
	with initial data $\rho^{\eta}(\cdot,0)=\rho_0$.  Then the laws of $\left\{\rho^{\eta}\right\}_{\eta\in(0,1)}$ are tight on $L^{2}\left([0,T];L^{2}(\mathbb{T}^{d};[-1,1]\right))$.
\end{proposition}
}
	
\subsection{Well-posedness of  \eqref{mainspde2}}	
 Leveraging the previous entropy estimate and tightness of the approximation equation, we can prove the well-posedness of the renormalized kinetic solution for the fluctuating Ising-Kac-Kawasaki equation \eqref{mainspde2}. The proof is achieved through an analogous approach to the well-posedness of  the Dean-Kawasaki equation \eqref{mainspde}.
\begin{theorem}\label{the-7.11}
	For any spatial dimension $d\ge1$, suppose that $\nabla J\in C^{\infty}(\mathbb{T}^d;\mathbb{R}^d)$. Let $\rho_0\in\overline{{\rm{Ent}}}\left(\mathbb{T}^{d}\right)$. Then there exists {\color{black}a unique probabilistically strong  renormalized kinetic solution} to (\ref{mainspde2}) in the sense of Definition \ref{def-7.3} with initial data $\rho_0$.
	\end{theorem}
\begin{proof}
For the uniqueness of renormalized kinetic solutions to (\ref{mainspde2}), since (\ref{mainspde2}) has two singularities at $\pm1$, we need to introduce a new cutoff function in the velocity variable compared with the Dean-Kawasaki equation \eqref{mainspde}. Concretely, for each $\beta\in(0,\frac14)$, define a smooth function $\zeta_{\beta}:\mathbb{R}\to[0,1]$ which satisfies $\zeta_{\beta}=0$ if $\xi\le-1+\beta$ or $\xi\ge1-\beta$, $\zeta_{\beta}=1$ if $-1+2\beta\le\xi\le1-2\beta$, and the condition $|\zeta_{\beta}'|\le c/\beta$ for some constant $c\in(0,\infty)$ independent of $\beta$.  Then, with the help of $\zeta_{\beta}$ and \eqref{mu},  we can apply a method akin to that used in \cite[Theorem 4.6]{FG21}, \cite[Theorem 2.6]{DFG} and Theorem \ref{the-uniq} to establish the uniqueness. Since the entropy estimate from Proposition \ref{pro-7.4} provides the regularity properties of the solution, the proof of the existence of renormalized kinetic solutions to (\ref{mainspde2}) can closely parallel that of \cite[Theorem 5.25]{FG21}, \cite[Theorem 2.15]{DFG} and Theorem \ref{the-existence}.
\end{proof}
	\begin{remark}
Different from the Dean-Kawasaki equation \eqref{mainspde}, the well-posedness of (\ref{mainspde2}) holds for all spatial dimensions $d\geq 1$. The reason is that we have assumed the condition $\nabla J\in C^{\infty}(\mathbb{T}^d;\mathbb{R}^d)$, which can guarantee the integrability of the kernel term instead of applying the interpolation inequalities.
	\end{remark}

\noindent{\bf  Acknowledgements}\quad
The first author is supported by National Key R\&D Program of China (No. 2020YFA0712700), National Natural Science Foundation of China (No. 12090010, 12090014, 12471138) and Key Laboratory of Random Complex Structures and Data Science, Academy of Mathematics and Systems Science, Chinese Academy of Sciences (No. 2008DP173182). The second author is supported by the Deutsche Forschungsgemeinschaft (DFG, German Research Foundation) via IRTG 2235 - Project Number 282638148. The third author is supported by Open Foundation of the State Key Laboratory of Mathematical Sciences (Grant No. 09), Beijing Institute of Technology Research Fund Program for Young Scholars and MIIT Key Laboratory of Mathematical Theory and Computation in Information Security.
 
 The authors are grateful to the anonymous reviewers for their valuable suggestions and insightful feedback that have improved the clarity and quality of this work.

\bibliographystyle{alpha}
\bibliography{wang-wu-zhang.bib}

\def\refname{References}

\end{document}